\theoremstyle{plain}
\newtheorem{The}{Theorem}[section]    % 分章定义
\newtheorem{Lem}[The]{Lemma}
\theoremstyle{definition}
\newtheorem{Def}{Definition}
\numberwithin{equation}{section} %公式编号带章节
\newcommand{\Rmnum}[1]{\expandafter\@slowromancap\romannumeral #1@}
\title{The structure connectivity of Data Center Networks }
\author{Lina Ba and Heping Zhang\thanks{Corresponding author.}}
\date{{\small School of Mathematics and Statistics, Lanzhou University,
 Lanzhou, Gansu 730000, P.R. China}\\
{\small E-mails:\ baln19@lzu.edu.cn, zhanghp@lzu.edu.cn}}
\begin{document}

%-----------------Title------------------
\maketitle
%----------------Abstract------------------
\begin{abstract}
 Last decade, numerous giant data center networks are built
to provide increasingly fashionable web applications. For two integers $m\geq 0$ and $n\geq 2$, the $m$-dimensional DCell network with $n$-port switches $D_{m,n}$ and $n$-dimensional BCDC network $B_{n}$ have been proposed.
 Connectivity is a basic parameter to measure fault-tolerance of networks.
 As generalizations of connectivity, structure (substructure) connectivity was recently proposed. Let $G$ and $H$ be two connected graphs. Let $\mathcal{F}$ be a set whose elements are subgraphs of $G$, and every member of $\mathcal{F}$ is isomorphic to $H$ (resp. a connected subgraph of $H$). Then $H$-structure connectivity $\kappa(G; H)$ (resp. $H$-substructure connectivity $\kappa^{s}(G; H)$) of $G$ is the size of a smallest set of $\mathcal{F}$  such that  the rest of $G$ is disconnected or the singleton when removing $\mathcal{F}$. Then it is meaningful to calculate the structure connectivity of data center networks on some common structures, such as star $K_{1,t}$, path $P_k$, cycle $C_k$, complete graph $K_s$ and so on. In this paper, %we consider two data center networks DCell denoted by $D_{m,n}$ and BCDC denoted by $B_{n}$. %We compute the structure connectivity of $D_{m,n}$ on special structures star $k_{1,t}$ and complete graph $K_{s}$, and structure connectivity of $B_{n}$ on star $k_{1,t}$, path $P_{k}$ and $C_{k}$.
we obtain that $\kappa (D_{m,n}; K_{1,t})=\kappa^s (D_{m,n}; K_{1,t})=\lceil \frac{n-1}{1+t}\rceil+m$ for $1\leq t\leq m+n-2$ and $\kappa (D_{m,n}; K_s)= \lceil\frac{n-1}{s}\rceil+m$ for $3\leq s\leq n-1$ by analyzing the structural properties of $D_{m,n}$. We also compute $\kappa(B_n; H)$ and $\kappa^s(B_n; H)$ for $H\in \{K_{1,t}, P_{k}, C_{k}|1\leq t\leq 2n-3, 6\leq k\leq 2n-1 \}$ and $n\geq 5$ by using $g$-extra connectivity of $B_n$.
%$\kappa(B_n; K_{1,t})$  and $\kappa^s(B_n; K_{1,t})$, $1\leq t\leq 2n-3$; $\kappa(B_n; P_{k})$ and $\kappa^s(B_n; P_{k})$, $4\leq k\leq 2n-1$; $\kappa(B_n; C_{k})$ and $\kappa^s(B_n; C_{k})$, $6\leq k\leq 2n$.

     \vskip 0.1in

    \noindent {\bf Keywords:} \ Fault-tolerance; Structure connectivity;  DCell;  BCDC.

    \vskip 0.1 in

    %\noindent {\bf Mathematics Subject Classification:} \ 90C27; 90C35; 05C72

    \medskip
\end{abstract}
%----------------Abstract end------------------
\section{Introduction}
Networks are often modeled by connected graphs. A node replaces with a vertex and a link replaces with an edge.
Let $G$ be a graph. We set $V(G)$ as vertex set of $G$ and $E(G)$ as edge set of $G$.
Connectivity is a basic parameter in measuring fault tolerance of networks.
In order to more accurately measure fault-tolerance of large-scale parallel processing systems,
$\rho$-conditional connectivity for some property $\rho$ proposed by Harary \cite{1} referred to the minimum cardinality of a vertex subset $S$ of $G$ such that $G-S$ is disconnect and every component has property $\rho$. Thus, $h$-extra connectivity $\kappa_h(G)$ is introduced by F\`{a}brega and Fiol \cite{2}, and the definition emphasizes that the every component left behind has at least $h+1$ vertices;  $h$-restricted connectivity $\kappa ^h(G)$ is came up with by Esfahanian and Hakimi \cite{3}, and $\kappa ^h(G)$ requires that any one vertex of every component left behind has at least $h$ neighborts.
In the Network-on-Chip technology, if there is a fault node on the chip, then we assume that the whole chip is fault. Because of the facts,
structure (substructure) connectivity is defined by Lin et al. \cite {5}.
Let $\mathcal{F}$ be a set of subgraphs of $G$. If $\mathcal{F}$ disconnects $G$ or $G-\mathcal{F}$ is a trivial graph, then $\mathcal{F}$
 is called a subgraph cut of $G$.
For a connected subgraph $H$ and a subgraph cut $\mathcal{F}$ of $G$, if $\mathcal{F}$'s every member is isomorphic to $H$ (resp. a connected subgraph of $H$), then $\mathcal{F}$ is called an $H$-structure cut (resp. $H$-substructure cut). The size of the smallest $H$-structure cuts (resp. $H$-substructure cuts) of $G$ is $H$-structure connectivity of $G$, $\kappa(G; H)$ (resp. $H$-substructure connectivity of $G$, $\kappa^{s}(G; H)$).
Thus, $\kappa^{s}(G; H)\leqslant \kappa(G; H)$. Especially, $K_{1}$-structure connectivity is equal to vertex connectivity. For the past few years, there are  many studies on structure connectivity for some well-known networks, such as  $HL$-networks \cite {4}, alternating group graphs \cite{7}, star graph \cite{8}, wheel network \cite {9} and so on.
%Very recently,  L\"{u} et al. \cite{6} proved that the complexity of determining $K_{1,m}$-structure connectivity and $K_{1,m}$-substructure connectivity of graphs are both NP-complete.

Data center network, DCN, is a networking infrastructure inside a data center, which connects many servers by links and switches \cite {10}.  Due to the the advances of science and technology,
DCN runs thousands of servers.
For instance, Google has over 450,000 servers operating in 30 data centers by 2006 \cite{11, 12}, and Microsoft and Yahoo! have hundreds of millions of servers running at the same time in their data centers \cite{13, 14}.
Then how to efficiently connect numerous servers become a fundamental challenge in DCN. Therefore, DCell and BCDC were proposed by Guo et al. \cite{10} and Wang et al. \cite{15}, respectively.
DCell defines by a recursive structure. A server connects to distinct levels of DCells by multiple links (for details see Definition 1). According to its structure, with the increase of node degree, DCell shows a doubly exponential growth. Then, without the use of costly core-switches or core-routers, a DCell with a degree less than 4 can operate over thousands of servers \cite{10}.
An $n$-dimensional BCDC, $B_n$, consists of an independent set and two $(n-1)$-dimensional BCDC's (for details see Definition 4).
Then the quantity of servers in BCDC add up fast as the dimension of BCDC grows. Such as, with the use of 16-port switches, $B_{16}$ operates 524 288 servers at once \cite{15}.
It is easy to find that DCell and BCDC provide two ways to connect plentiful servers efficiently. For these two networks, the best application scenarios are large data centers,
which contain end-user applications (e.g. Web search and IM) and distributed system operations (e.g. MapReduce and GFS) \cite{10}. Because DCell and BCDC based on complete graphs and crossed cubes, respectively, they still have lots of good properties, such as high-capacity, minor diameter and high fault-tolerance.
There are a few primary characters of graphs $D_{m,n}$ and $B_n$, such as connectivity, diameter, symmetry, broadcasting, have been studied recently \cite{10, 15}. As extensions of connectivity, $h$-restricted connectivity \cite{16, 17} and $h$-extra connectivity \cite{18, 17} have been studied, yet. Then, we compute the structure connectivities of $D_{m,n}$ and $B_n$ on some common structures, star, complete graph, path and cycle in this paper.

The paper has the following sections. Section 2 defines a few notations. Sections 3 presents the star-structure (substructure) connectivity and complete graph-structure connectivitives of DCell. Sections 4 shows the star-structure (substructure) connectivity, path-structure (substructure) connectivity and cycle-structure (substructure) connectivity of BCDC.  Section 5 summarizes the content of this paper and  indicates some unsolved questions.
\section{Notations}
In this paper, we only consider finite and simple graphs. For a graph $H$ that satisfies the conditions $V (H)\subseteq V(G)$ and $E(H) \subseteq E(G)$, we call $H$ a subgraph of $G$. For a vertex subset $S$ of $G$,
$G-S$ is a graph obtained by deleting the vertex set $S$ and all edges incident to them from $G$. If $H$ is a subgraph of $G$, then we set $G-H=G-V(H)$.
 Let $\mathcal{F}$ be a set whose members are subgraphs of $G$. Then $G-\mathcal{F}=G-V(\mathcal{F})$, where
 $V(\mathcal{F})$ is the union of the vertex set of members of $\mathcal{F}$.
 For a vertex subset $V'$ of $G$, we set $G[V']$ as the induced subgraph by $V'$ of $G$, where $V(G[V']) = V'$ and $E(G[V']) = \{(u,u')\in E(G)| u,u'\in V'\}.$
For any one vertex $x$ of $G$, $N_{G}(x)=\{y|xy\in E(G)\}$. For a vertex subset $A$ of $G$, $N_{G}(A)=\cup_{x\in A}N_{G}(x)\setminus A$.
Let $P_k=\langle v_{1}, v_{2}, \ldots,  v_{k}\rangle$ be a path. We set $P^{-1}_k=\langle v_{k}, v_{k-1}, \ldots, v_{1}\rangle$ and $P_k-v_i\cup w=\langle v_{1}, \ldots, v_{i-1}, w, v_{i+1}, \ldots,  v_{k}\rangle$.
If $v_1=v_k$ in $P_k$, $k\geq3$, then we call it cycle $C_k$. Let $\{x; x_i|1\leq i\leq t\}$ be a star $K_{1,t}$. Then we set $x$ and  $x_i$, $1\leq i\leq t$, are center and leaves, respectively.

\section{The structure connectivity of DCell }
Given an integer $s$, we set $\langle s\rangle=\{0,1,\ldots, s\}$ and $[s]=\{1,2,\ldots,s\}$. Define $I_{0,n} = \langle n-1\rangle$ and $I_{i,n} = \langle t_{i-1,n}\rangle$ for $i\in [m]$.
An $m$-dimensional DCell with $n$-port switches is denoted by $D_{m,n}$ for $m\geq 0$ and $n\geq 2$. Let $t_{m,n}$ be the number of servers in $D_{m,n}$, where $t_{0,n}=n$ and $t_{m,n}=t_{m-1,n} \cdot (t_{m-1,n} + 1)$ for $m\geq 1$. Servers  of $D_{m,n}$ can be labeled by $\{x_mx_{m-1} \ldots x_1x_0|x_i \in I_{i,n}, i\in \langle m\rangle\}$.

\begin{Def} \cite{10} The $m$-dimensional DCell with $n$-port switches $D_{m,n}$ is defined recursively as follows.
\begin{enumerate}[(1)]
\item \label{cond 1}
$D_{0,n}$ is a complete graph consisting of $n$ servers.
\item \label{cond 2}
For $m\geq 1$, $D_{m,n}$ is obtained from $t_{m-1,n} + 1$ disjoint copies $D_{m-1,n}$ by the following steps.
\begin{enumerate}[(i)]
\item \label{cond 1} Let $D^i_{m-1,n}$ be a copy of $D_{m-1,n}$ by prefixing label of each server with $i$ for $i \in I_{m,n}.$
\item \label{cond 2} Server $u = u_mu_{m-1},\ldots, u_0$ in $D^{u_m}_{m-1,n}$ is adjacent to server $v = v_mv_{m-1},\ldots, v_0$ in $D^{v_m}_{m-1,n}$ if and only if $u_m = v_0 + \sum^{m-1}_{j=1}(v_j \times t_{j-1,n})$
and $v_m = u_0 +\sum^{m-1}_{j=1} (u_j \times t_{j-1,n})+ 1$ for any $u_m, v_m \in I_{m,n}$ and $u_m< v_m$.
\end{enumerate}
\end{enumerate}
\end{Def}
If $u$ has a neighbor $u'\notin D^i_{m-1, n}$ for $u\in D^i_{m-1, n}$ and $i\in I_{m,n}$, then $u'$ is called the outside neighbor of $u$.
Figure 1(a) shows 1-dimensional DCell with 4-port switches.
Since switches are transparent in networks, the graph structure of $D_{1,4}$ is shown in Figure 1(b).
%\begin{Def} \cite{10} For any integers $m\geq 0$ and $n \geq 2$, the $m$-dimensional DCell network with $n$-port switches is denoted by $D_{m,n} = (V_{m,n}, E_{m,n})$, where $V_{m,n}= \{u_mu_{m-1} \ldots u_0|u_i \in I_{i,n}$ and $i\in \langle m\rangle\}.$ For two vertices $u = u_mu_{m-1}\ldots u_0\in V_{m,n}$ and $v = v_mv_{m-1}\ldots v_0\in V_{m,n}$, $u$ is adjacent to $v$ if there exists an integer $s\in [m]$ such that (1) $u_mu_{m-1}\ldots u_s = v_mv_{m-1}\ldots v_s$, (2) $u_{s-1}= v_{s-1}$, (3) $u_{s-1} = v_0 + \sum_{i=1}^{s-2}(v_i \cdot t_{i-1,n})$ and $v_{s-1} = u_0 + \sum_{i=1}^{s-2}(u_i \cdot t_{i-1,n}) + 1$ for $s\geq 2$. \end{Def}
\begin{figure}[!htbp]
\begin{center}
\subfigure[%The 1-dimensional DCell with 4-port switches.
]{\includegraphics[totalheight=6.5cm, width=6.5cm]{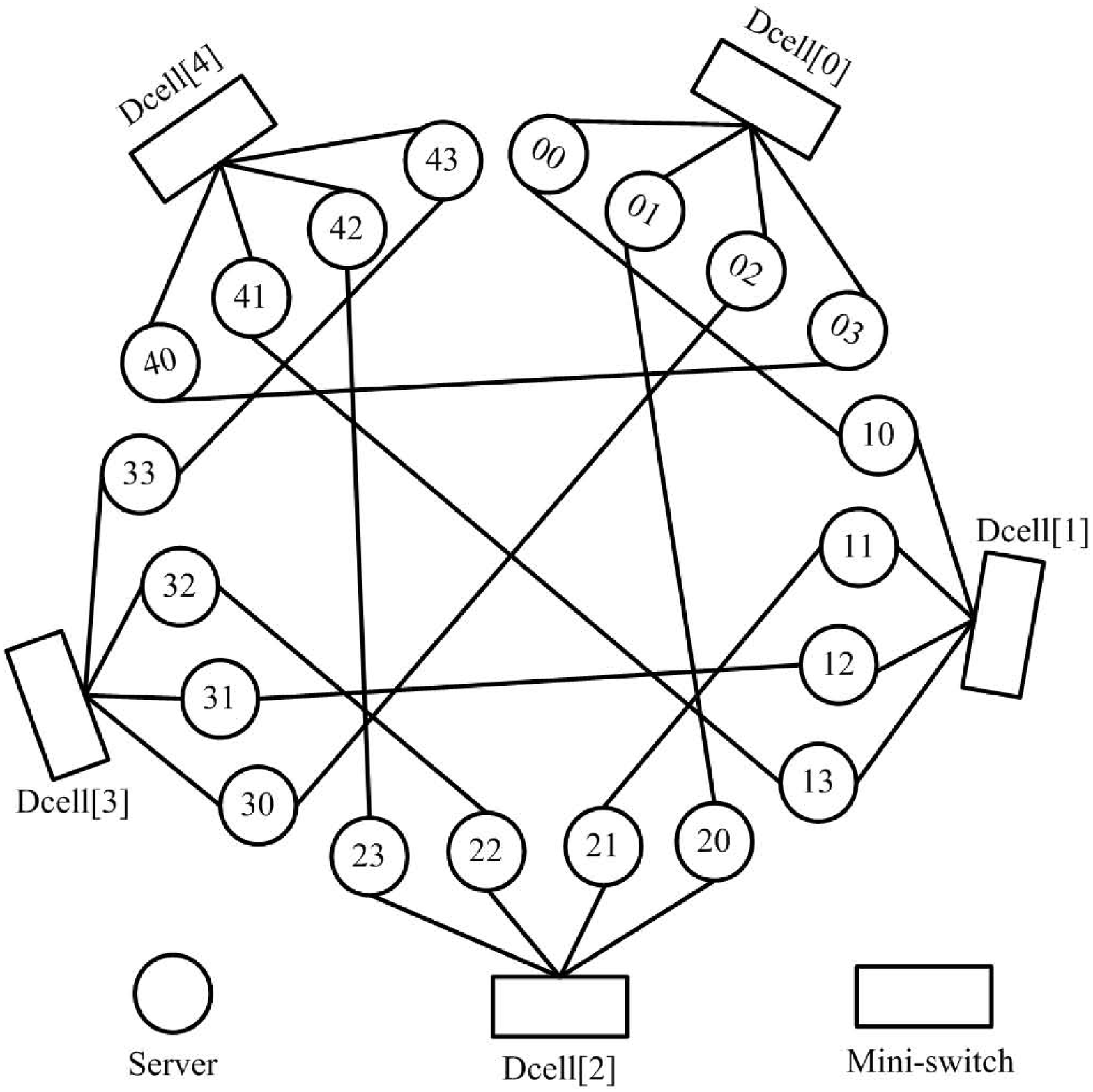}}
~~~~~\subfigure[ %The graph  structure of $D_{1,4}$.
]{\includegraphics[totalheight=6cm, width=6cm]{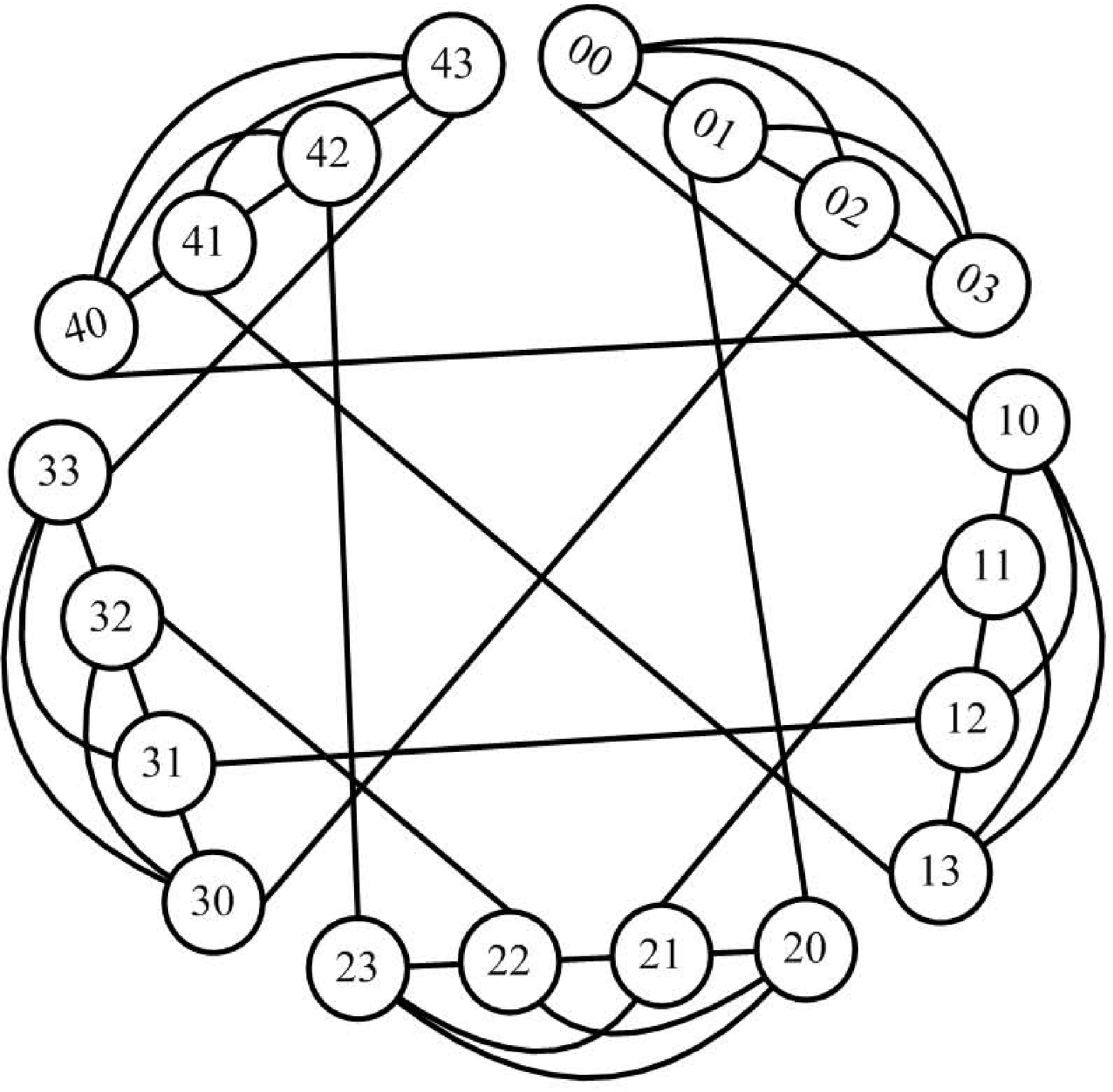}}
\caption{\label{ }\small{The 1-dimensional DCell with 4-port switches and the graph structure of $D_{1,4}$.}}
\end{center}
\end{figure}
 %Let $V^s_{m,n} =\{u_mu_{m-1}\ldots u_s|u_i\in I_{i,n}$ and $i\in \{s, s+1,\ldots, m\}\}$ for all $s \in [m]$. %We know that $|V^s_{m,n}| = \frac{t_{m,n}}{t_{s-1,n}}$. For a given $V^s_{m,n}$ and $\alpha \in V^s_{m,n}$, $s\in [m]$, $D^{\alpha}_{s-1,n}$ is an induced subgraph of $D_{m,n}$ with $V(D^{\alpha}_{s-1,n}) = \{u_mu_{m-1}\ldots u_0|u_mu_{m-1}\ldots u_s = \alpha, u_i\in I_{i,n}$ and $i\in \langle s-1\rangle\}$. Clearly, $D^{\alpha}_{s-1,n}$ is isomorphic to $D_{s-1,n}$. For each $u\in D^i_{m-1, n}$, $m\geq 1$ and $n\geq 2$, %edges joining vertices in the same copy of $D_{m-1,n}$ are called \emph{internal edges} and edges joining vertices in disjoint copies of $D_{m-1,n}$ are called \emph{external edges}. Clearly,, it has exactly $(n+k-2)$ neighbors in $D^i_{m-1, n}$ and exactly one neighbor outside $D^i_{m-1, n}$ for $i\in I_{m,n}$, which is called the outside neighbor of $u$.

\begin{Lem} \emph{\cite{19}} For $m\geq 0$ and $n \geq 2$, the following statements hold:
\begin{enumerate}[(i)]
\item \label{cond 1}
$D_{m,n}$ is $(m+n-1)$-regular and $|V_{m,n}| = t_{m,n}$,
\item \label{cond 2}
for $m\geq 1$, $D_{m,n}$ consists of  $t_{m-1,n}+1$ copies of $D_{m-1,n}$, denoted by $D^i_{m-1,n}$, for each $i\in I_{m,n}$. There is only one edge between $D^i_{m-1,n}$ and $D^j_{m-1,n}$ for any $i,j \in I_{m,n}$ and $i\neq j$, which implies that the outside neighbors of vertices in $D^i_{m-1,n}$ belong to different copies of $D^j_{m-1,n}$ for $i,j \in I_{m,n}$ and $i\neq j$.
\end{enumerate}
\end{Lem}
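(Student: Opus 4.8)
The plan is to prove both statements simultaneously by induction on $m$, working directly from Definition 1. The base case $m=0$ is immediate: $D_{0,n}=K_n$ is $(n-1)$-regular on $n=t_{0,n}$ vertices, which matches $m+n-1$ and $t_{m,n}$ at $m=0$, while statement (ii) is vacuous. For the inductive step I would first set up the key bookkeeping device, the within-copy identifier $\mathrm{id}(u)=u_0+\sum_{j=1}^{m-1}u_j t_{j-1,n}$, and observe that it is a mixed-radix encoding with place value $t_{j-1,n}$ at digit $j$, hence a bijection from the vertex set of any single copy $D^{i}_{m-1,n}$ onto $\{0,1,\ldots,t_{m-1,n}-1\}$. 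The adjacency rule of Definition 1 can then be rewritten compactly: for $u\in D^{u_m}_{m-1,n}$ and $v\in D^{v_m}_{m-1,n}$ with $u_m<v_m$, the two servers are adjacent precisely when $u_m=\mathrm{id}(v)$ and $v_m=\mathrm{id}(u)+1$.

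For the vertex count I would combine the inductive hypothesis $|V(D^{i}_{m-1,n})|=t_{m-1,n}$ with the fact that there are $t_{m-1,n}+1$ copies, so $|V_{m,n}|=(t_{m-1,n}+1)\,t_{m-1,n}=t_{m,n}$ by the defining recurrence. For the ``one edge between copies'' part of (ii) I would fix two copies $D^{i}_{m-1,n}$ and $D^{j}_{m-1,n}$ with $i<j$ and note that any edge joining them forces $\mathrm{id}(u)=j-1$ for its endpoint $u$ in copy $i$ and $\mathrm{id}(v)=i$ for its endpoint $v$ in copy $j$. Since $i<j\leq t_{m-1,n}$ yields $i,\,j-1\in\{0,\ldots,t_{m-1,n}-1\}$, bijectivity of $\mathrm{id}$ produces exactly one admissible $u$ and one admissible $v$, so there is exactly one inter-copy edge. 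The final clause of (ii) is then a counting consequence: copy $i$ emits one edge to each of the other $t_{m-1,n}$ copies, its $t_{m-1,n}$ vertices carry at most one outside edge apiece, and so the outside neighbors land in pairwise distinct copies.

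The regularity claim is where the genuine content lies, and I would establish it by showing each vertex has exactly one outside neighbor. Reading the compact adjacency rule from $u$'s side, $u$ has a forward neighbor in a higher-indexed copy exactly when $\mathrm{id}(u)+1>u_m$, that is $\mathrm{id}(u)\geq u_m$, and a backward neighbor in a lower-indexed copy exactly when $\mathrm{id}(u)<u_m$; in each case the endpoint and its host copy are uniquely determined, and the required index values stay inside $I_{m,n}$. These two conditions are mutually exclusive and jointly exhaustive, so $u$ has precisely one outside neighbor, and adding this to the inductive in-copy degree $m+n-2$ gives total degree $m+n-1$.

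The main obstacle I anticipate is arithmetic rather than conceptual: one must verify carefully that $\mathrm{id}$ really is a bijection onto $\{0,\ldots,t_{m-1,n}-1\}$ despite the varying radices recorded in the index sets $I_{i,n}$, and that the copy indices genuinely range over $I_{m,n}=\langle t_{m-1,n}\rangle$, so that every value $\mathrm{id}(u)+1$ and every target copy index actually occurs within the allowed range. Once these range checks are pinned down, the forward/backward dichotomy and the ``one edge per pair of copies'' count follow cleanly, and the induction closes.
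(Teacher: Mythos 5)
The paper does not prove this lemma at all: it is quoted verbatim from \cite{19} (Hao and Yang), so there is no in-paper argument to compare against. Your self-contained inductive proof is correct, and it is essentially the standard argument for DCell's structure. The key points all check out: with digit ranges $|I_{0,n}|=t_{0,n}$ and $|I_{j,n}|=t_{j-1,n}+1$, the place values of your map $\mathrm{id}$ satisfy $P_1=t_{0,n}$ and $P_{j+1}=P_j\,|I_{j,n}|=t_{j-1,n}(t_{j-1,n}+1)=t_{j,n}$, so $\mathrm{id}$ is indeed a bijection onto $\{0,\ldots,t_{m-1,n}-1\}$; the one-edge-per-pair claim follows since for copies $i<j$ the conditions $\mathrm{id}(u)=j-1$, $\mathrm{id}(v)=i$ have unique solutions precisely because $i\leq t_{m-1,n}-1$ and $1\leq j\leq t_{m-1,n}$; and the forward/backward dichotomy ($\mathrm{id}(u)\geq u_m$ versus $\mathrm{id}(u)<u_m$) is genuinely exclusive and exhaustive, with the needed range checks automatic (e.g.\ if $u_m=t_{m-1,n}$ the forward case cannot occur, consistent with $\mathrm{id}(u)+1\leq t_{m-1,n}$). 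One small point worth making explicit if you write this up: the final clause of (ii) uses that each vertex has \emph{exactly} one outside neighbor (from your regularity argument) together with the count of $t_{m-1,n}$ inter-copy edges leaving copy $i$, so the vertex-to-target-copy map is a bijection, not merely an injection on those vertices that happen to have outside neighbors. What your approach buys, compared with the paper's bare citation, is a verifiable derivation directly from Definition 1; it matches the structural analysis in the DCell literature, so nothing is lost in generality either.
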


For convenience, let
\begin{equation}\label{Eq}
D_{m,n}=D^0_{m-1,n}\otimes D^1_{m-1,n}\otimes D^2_{m-1,n}\otimes \ldots \otimes D^{t_{m-1,n}}_{m-1,n}.
\end{equation}

\begin{Lem} \emph{\cite{10}} For $m\geq 0$ and $n\geq 2$, $\kappa (D_{m,n})=n+m-1$ and %the number of vertices in $D_{m,n}$ satisfies
 $t_{m,n}\geq (n+\frac{1}{2})^{2^m}-\frac{1}{2}.$
\end{Lem}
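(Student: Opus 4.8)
The statement bundles two independent claims, and the plan is to treat them separately: an elementary recurrence estimate for the lower bound on $t_{m,n}$, and an induction on $m$ for the connectivity $\kappa(D_{m,n})=n+m-1$. The upper bound $\kappa(D_{m,n})\le n+m-1$ is immediate, since $\kappa(G)\le\delta(G)$ for any graph on at least two vertices and $D_{m,n}$ is $(m+n-1)$-regular (the regularity part of the structural lemma above); so the real work lies in the matching lower bound and the counting estimate.

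For the size bound I would complete the square in the recursion $t_{m,n}=t_{m-1,n}(t_{m-1,n}+1)=t_{m-1,n}^{2}+t_{m-1,n}$. Setting $a_m=t_{m,n}+\tfrac12$ turns this into $a_m=a_{m-1}^{2}+\tfrac14\ge a_{m-1}^{2}$, whence iterating gives $a_m\ge a_{m-1}^{2}\ge a_{m-2}^{2^{2}}\ge\cdots\ge a_0^{2^{m}}$ with $a_0=n+\tfrac12$. This is exactly $t_{m,n}+\tfrac12\ge(n+\tfrac12)^{2^{m}}$, i.e. the claimed doubly exponential bound. This part is routine.

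The heart of the proof is the lower bound $\kappa(D_{m,n})\ge n+m-1$, which I would prove by induction on $m$. The base case $m=0$ is $D_{0,n}=K_n$, with $\kappa(K_n)=n-1$. For the inductive step assume $\kappa(D_{m-1,n})=n+m-2$ and recall that $D_{m,n}$ splits into $t_{m-1,n}+1$ copies $D^{i}_{m-1,n}$, each pair joined by exactly one edge; since each copy is $(n+m-2)$-regular while $D_{m,n}$ is $(n+m-1)$-regular, every vertex has precisely one outside neighbour, and these outside neighbours land in distinct copies, so the copies form a complete ``meta-graph'' $K_{t_{m-1,n}+1}$. Take any $F\subseteq V(D_{m,n})$ with $|F|\le n+m-2$ and write $F_i=F\cap V(D^{i}_{m-1,n})$; I would show $D_{m,n}-F$ is connected. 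Because $\sum_i|F_i|\le n+m-2$, at most one copy has $|F_i|\ge n+m-2$, so either every copy satisfies $|F_i|\le n+m-3$, or exactly one copy $i_0$ carries all $n+m-2$ faults and the rest are fault-free.

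The case analysis is where the real care is needed. If every $|F_i|\le n+m-3$, the induction hypothesis makes each $D^{i}_{m-1,n}-F_i$ connected and nonempty; each faulty vertex destroys at most one inter-copy edge (its unique outside edge), so at most $n+m-2$ edges of $K_{t_{m-1,n}+1}$ are lost, and since $t_{m-1,n}\ge n+m-1$ (guaranteed by the counting bound) this complete graph retains its edge-connectivity and stays connected, forcing $D_{m,n}-F$ to be connected. In the remaining case the fault-free copies merge into one connected block through their intact mutual edges, and every surviving vertex of the damaged copy $i_0$ still keeps its (fault-free) outside neighbour, hence attaches to that block; so again $D_{m,n}-F$ is connected. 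I expect the main obstacle to be the bookkeeping in this second configuration---verifying that no fragment of $D^{i_0}_{m-1,n}-F_{i_0}$ can be isolated---which is precisely where the single-outside-neighbour property and the largeness of $t_{m-1,n}$ from the counting estimate are used. Together the two bounds give $\kappa(D_{m,n})=n+m-1$.
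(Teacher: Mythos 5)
Your proposal is correct, but there is nothing in the paper to compare it against: Lemma 3.2 is stated with a citation to \cite{10} (the original DCell paper of Guo et al.) and the authors give no proof of it, using it only as a black box in Lemmas 3.3 and 3.6. Judged on its own, your argument is sound. The substitution $a_m=t_{m,n}+\tfrac12$ gives exactly $a_m=a_{m-1}^2+\tfrac14\ge a_{m-1}^2$, hence $t_{m,n}\ge (n+\tfrac12)^{2^m}-\tfrac12$; this is the standard completing-the-square trick and is airtight. For the connectivity, the upper bound via $\kappa\le\delta$ and the induction for the lower bound both work: since $|F|\le n+m-2$, either every $F_i$ has $|F_i|\le n+m-3$, in which case each $D^i_{m-1,n}-F_i$ is connected by the induction hypothesis and the meta-graph $K_{t_{m-1,n}+1}$ (each faulty vertex killing at most its one cross edge) survives the loss of at most $n+m-2\le t_{m-1,n}-1$ edges because its edge-connectivity is $t_{m-1,n}$; or a single copy carries all $n+m-2$ faults, and then every surviving vertex of that copy retains its unique outside neighbour, which lies in a distinct fault-free copy, so no fragment can be isolated. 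The only small point worth making explicit is the inequality $t_{m-1,n}\ge n+m-1$ you invoke for the meta-graph step; it follows immediately by induction from $t_{m,n}=t_{m-1,n}(t_{m-1,n}+1)\ge t_{m-1,n}+1$ with $t_{0,n}=n$, or from your counting bound. With that line added, your write-up is a complete, self-contained proof of a statement the paper merely imports.
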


\begin{Lem} $\kappa^s (D_{m,n}; K_{1,t})\geq \lceil \frac{n-1}{1+t}\rceil+m$ for $1\leq t\leq m+n-1.$
\end{Lem}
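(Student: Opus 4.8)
The plan is to argue by induction on $m$, working with the contrapositive: if a family $\mathcal{F}$ of connected subgraphs of $K_{1,t}$ has at most $\lceil\frac{n-1}{1+t}\rceil+m-1$ members, then $D_{m,n}-\mathcal{F}$ is connected (the other alternative, that $D_{m,n}-\mathcal{F}$ is a single vertex, is discarded immediately, since it would force $|V(\mathcal{F})|=t_{m,n}-1$, far exceeding the $(\lceil\frac{n-1}{1+t}\rceil+m-1)(t+1)$ vertices the family can cover). Write $r=\lceil\frac{n-1}{1+t}\rceil$. For the base case $m=0$ we have $D_{0,n}=K_n$: at most $r-1$ stars cover at most $(r-1)(t+1)<n-1$ vertices, so at least two vertices of $K_n$ survive and the remainder is connected.

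For the inductive step I would exploit the decomposition $D_{m,n}=D^{0}_{m-1,n}\otimes\cdots\otimes D^{t_{m-1,n}}_{m-1,n}$ together with the structural facts that there is exactly one edge between any two copies and that each vertex has a unique outside neighbour; consequently the inter-copy edges form a perfect matching and the outside neighbours of a fixed copy lie in pairwise distinct copies. The crucial device is a bridge observation: since every member of $\mathcal{F}$ is a star it meets at most two copies, and if a member meets both $D^{i}_{m-1,n}$ and $D^{j}_{m-1,n}$ it must contain the unique edge joining them, so both endpoints of that edge lie in $V(\mathcal{F})$. First I would build a connected core: writing $f:=|\mathcal{F}|$, at most $2f$ of the $t_{m-1,n}+1$ copies are met by $\mathcal{F}$, leaving a vast number untouched; any two untouched copies are joined by a surviving matching edge, so the untouched copies span a single connected subgraph $\mathcal{C}$ lying in one component of $D_{m,n}-\mathcal{F}$.

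Next I would show every surviving vertex attaches to $\mathcal{C}$. Call a copy \emph{light} if it is met by at most $r+m-2$ members and \emph{heavy} otherwise; counting incidences (each member meets at most two copies) forces at most two heavy copies. For a light copy the induction hypothesis makes its surviving part connected, and since it sends matching edges to almost all untouched copies, at least one such edge survives and links it to $\mathcal{C}$; hence light copies contribute nothing outside the component of $\mathcal{C}$. The bridge observation then controls the heavy copies: if a surviving vertex $v$ of a heavy copy $D^{i}_{m-1,n}$ had its outside neighbour $w_v$ deleted, any member containing $w_v$ and meeting $D^{i}_{m-1,n}$ would contain the edge $vw_v$, hence $v$ itself, contradicting that $v$ survives; so the member covering $w_v$ avoids $D^{i}_{m-1,n}$, which drops the number of members meeting $D^{i}_{m-1,n}$ to at most $r+m-2$ and downgrades it to a light copy. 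Pushing this through forces every component of $D_{m,n}-\mathcal{F}$ to reach $\mathcal{C}$, contradicting disconnectedness and yielding $f\ge r+m$.

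The hard part will be the residual configuration in which two copies are simultaneously heavy and every member straddles exactly those two copies; then all members share the single bridge edge $ab$ between them, so the deleted set inside each heavy copy lies in the neighbourhood of one vertex. Ruling this out cleanly seems to require a super-connectivity-type input, namely that deleting a vertex together with all its neighbours leaves $D_{m-1,n}$ connected for $m\ge 2$ (with the complete-graph case $m=1$ checked by hand), so that such a localized deletion cannot separate a copy. Establishing this auxiliary property and combining it with the matching bijection to guarantee a surviving link to $\mathcal{C}$ is the main obstacle; once it is in place the induction closes.
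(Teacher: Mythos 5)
Your induction scheme, the copy decomposition, the perfect-matching structure of the level-$m$ edges, the bridge observation, the connected core $\mathcal{C}$ of untouched copies, and the incidence count limiting heavy copies to two all match the skeleton of the paper's proof, and your handling of heavy copies is essentially the paper's Case 2. The genuine gap is in the step you treat as routine: the light copies. You assert that a light copy ``sends matching edges to almost all untouched copies,'' so that some such edge survives and attaches it directly to $\mathcal{C}$. That count is valid only when the untouched copies vastly outnumber the vertices deleted inside the light copy, and it fails exactly at the first inductive step $m=1$: $D_{1,n}$ has only $n+1$ copies, as many as $2f=2\lceil\frac{n-1}{1+t}\rceil$ of them can be touched (for $t=1$ and even $n$ this leaves a single untouched copy), while a light copy can lose $(f-1)(1+t)$, i.e.\ up to $n-2$, of its $n$ vertices. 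Concretely, in $D_{1,8}$ with $t=1$ and $f=4$: put three edges inside $D^{0}_{0,8}$ deleting precisely the six vertices whose outside neighbours lie in copies $3,\dots,8$, and take as fourth member the unique bridge edge between copies $1$ and $2$. Then $D^{0}_{0,8}$ is light (met by three members), its two survivors have their outside neighbours in the touched copies $1$ and $2$, so $D^{0}_{0,8}-\mathcal{F}^{0}$ has no edge at all to an untouched copy; it reaches $\mathcal{C}$ only through the surviving parts of copies $1$ and $2$. So the ``direct attachment'' claim is false as stated, and the induction does not close as written. The paper gets around this with bookkeeping your proposal lacks: it splits the touched copies into those containing star-centres ($S_1$) and those touched only by leaves; in a leaf-only copy every deleted vertex is the matching partner of a centre and hence points back into an $S_1$-copy, so its edges toward untouched copies are automatically intact, and the union of untouched and leaf-only copies minus $\mathcal{F}$ is connected; each centre-copy is then attached to this larger set (not merely to $\mathcal{C}$) via the count $t_{m-1,n}-(1+t)p_i-(f-p_i)>0$, where the term $f-p_i$ is justified because, again by the matching, an external member's unique deleted vertex in the copy coincides with the vertex of that copy pointing at the member's home copy, so each external member is charged only once.

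By contrast, the configuration you single out as the main obstacle---two heavy copies with all members straddling the same bridge edge---is disposed of by your own bridge observation with no extra input. If copies $i$ and $j$ are both heavy, every member meets both, hence contains the unique bridge edge between them; consequently no member touches any other copy, and all other copies form $\mathcal{C}$. Moreover, for any heavy copy, a surviving vertex $v$ whose outside neighbour $w_v$ lay in a touched copy would yield a member meeting both that copy and the heavy one, hence containing the edge $vw_v$ and so $v$ itself, which is impossible; thus every surviving vertex of a heavy copy has its outside neighbour surviving in an untouched copy and attaches to $\mathcal{C}$ on its own, whether or not the heavy copy's surviving part is internally connected. The auxiliary super-connectivity lemma you propose is therefore unnecessary, and in any case it would not repair the light-copy gap, which is where the real work of the paper's proof lies.
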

\begin{proof} Let's explain this by mathematical induction on $m$. For $m=0$, $D_{0,n}\cong K_n$. Assume $\kappa^s (D_{0,n}; K_{1,t})\leq \lceil \frac{n-1}{1+t}\rceil-1$ for $1\leq t\leq n-1.$ Then $D_{0,n}$ has a $K_{1,t}$-substructure cut $\mathcal{F'}$ with $|\mathcal{F'}|\leq \lceil\frac{n-1}{1+t}\rceil-1$.
We have
\begin{equation*}
|V(\mathcal{F'})|\leq (1+t)(\lceil\frac{n-1}{1+t}\rceil-1)\leq (1+t)(\frac{n+t-1}{1+t}-1)=n-2.
\end{equation*}
Since $\kappa (D_{0,n})=n-1$, $|V(\mathcal{F'})|< \kappa (D_{0,n})$, a contradiction. Then it is true for $m=0$.

Suppose $\kappa^s (D_{m-1,n}; K_{1,t})\geq \lceil \frac{n-1}{1+t}\rceil+m-1$ for $1\leq t\leq m+n-2.$
 Now, we consider $D_{m,n}$. Let $\mathcal{F}=\{T_i|0\leq i\leq \lceil\frac{n-1}{1+t}\rceil+m-2\}$ be a set of connected subgraphs of $K_{1,t}$ for $1\leq t\leq n+m-1$. We need to show that $D_{m,n}-\mathcal{F}$ is connected. Let $\mathcal{F}^i=\mathcal{F}\cap D^i_{m-1, n}$,
$\mathcal{F}^{M}=\cup_{i\in M}\mathcal{F}^i$ and $D_{m-1,n}^{M}=\cup_{i\in M} D^i_{m-1,n}$ by Eq. (\ref{Eq}). We know that each star is in at most two $D^i_{m-1,n}$'s by Definition 1. Then there exist at most
$2(\lceil \frac{n-1}{1+t}\rceil+m-1)$ $D^i_{m-1,n}$'s such that $\mathcal{F}^i\neq \emptyset.$ Let $S=\{0,1,2,\ldots, s\}$ be a set such that $\mathcal{F}^i\neq\emptyset$ for $i\in S$. Then $|S| \leq 2(\lceil \frac{n-1}{1+t}\rceil+m-1)$.
By Lemma 3.2, for $m\geq 1$ and $n\geq 2$,
$$
\aligned
&t_{m-1,n}+1-|S|\geq t_{m-1,n}+1-2(\lceil \frac{n-1}{1+t}\rceil+m-1)
\geq t_{m-1,n}+1-2( \frac{n+t-1}{1+t}+m-1)\\&
\geq t_{m-1,n}+1-2( \frac{n}{2}+m-1)%\geq t_{m-1,n}-n-2m+3
\geq (n+\frac{1}{2})^{2^{m-1}}-n-2m+\frac{5}{2}
\geq (n+\frac{1}{2})-n-2+\frac{5}{2}
=1,
\endaligned
$$
which implies that there exists $D^i_{m-1,n}$ satisfied $\mathcal{F}^i=\emptyset$ for $i\in I_{m,n}\setminus S$.
Then $\mathcal{F}^{I_{m,n}\setminus S}=\emptyset$ and $D_{m-1,n}^{I_{m,n}\setminus S}-\mathcal{F}^{I_{m,n}\setminus S}=D_{m-1,n}^{I_{m,n}\setminus S}$ is connected. Let $S_1=\{0,1,2,\ldots, s_1\}$ be a subset of $S$ satisfied that there exist star-centers in $\mathcal{F}^i$ for $i\in S_1$. Then there only leaves of stars in $\mathcal{F}^i$ for $i\in S\setminus S_1$.

If there exist $D^i_{m-1,n}$ and $D^j_{m-1,n}$ such that both $D^i_{m-1,n}-\mathcal{F}^i$ and $D^j_{m-1,n}-\mathcal{F}^j$ are disconnected for $i\in S$, $j \in S$ and $i\neq j$, then $|\mathcal{F}^i|\geq \lceil \frac{n-1}{1+t}\rceil+m-1$ and $|\mathcal{F}^j|\geq \lceil \frac{n-1}{1+t}\rceil+m-1$ by induction hypothesis. Since $|\mathcal{F}|=\lceil \frac{n-1}{1+t}\rceil+m-1$, $|\mathcal{F}^i|\geq \lceil \frac{n-1}{1+t}\rceil+m-1$ and $|\mathcal{F}^j|\geq \lceil \frac{n-1}{1+t}\rceil+m-1$ are impossible by Definition 1. Thus there is at most one $D^i_{m-1,n}$ satisfied that $D^i_{m-1,n}-\mathcal{F}^i$ is disconnected for $i\in S$.

  \textbf{Case 1}. Each $D^i_{m-1,n}-\mathcal{F}^i$ is connected for $i\in S$.

  For each $i\in S\setminus S_1$, $\mathcal{F}^i$ consists of the outside neighbors of partical star-centers which are in $D^{S_1}_{m-1,n}$. Then $D^i_{m-1,n}-\mathcal{F}^i$ has an outside neighbor of a vertex which is in $D^{I_{m,n}\setminus S}_{m-1, n}$ for $i\in S\setminus S_1$. Since $D^i_{m-1,n}-\mathcal{F}^i$ is connected,
  %$D^i_{m-1,n}-\mathcal{F}^i$ connects to $D^{I_{m,n}\setminus S}_{m-1, n}$. Thus
  $D_{m-1,n}^{I_{m,n}\setminus S_1}-\mathcal{F}^{I_{m,n}\setminus S_1}$ is connected.
   Let $p_i$ be the number of star-centers in $D^i_{m-1,n}$ for $i\in S_1$. For $p_i=\lceil \frac{n-1}{1+t}\rceil+m-1$, we find $S_1=\{i\}$. Then each vertex of $D^i_{m-1,n}-\mathcal{F}^i$ has an outside neighbor in $D_{m-1,n}^{I_{m,n}\setminus S}$. Since $D^{I_{m,n}\setminus \{i\}}_{m-1,n}-\mathcal{F}^{I_{m,n}\setminus \{i\}}$ is connected, $D_{m,n}-\mathcal{F}$ is connected (see Figure \ref{L3}(a)). For $p_i\leq\lceil \frac{n-1}{1+t}\rceil+m-2$, by Lemma 3.2, we have
 for $n\geq 2$ and $m\geq 2$,
 $$
 \aligned
 &|V(D^i_{m-1,n})|-(1+t)p_i-(\lceil \frac{n-1}{1+t}\rceil+m-1-p_i)
 =t_{m-1,n}-tp_i-(\lceil \frac{n-1}{1+t}\rceil+m-1)
 \geq\\& t_{m-1,n}-t(\lceil \frac{n-1}{1+t}\rceil+m-2)-(\lceil \frac{n-1}{1+t}\rceil+m-1)
 =t_{m-1,n}-(1+t)(\lceil \frac{n-1}{1+t}\rceil+m-1)+t\\
 &\geq t_{m-1,n}-(1+t)( \frac{n+t-1}{1+t}+m-1)+t
 \geq (n+\frac{1}{2})^{2^{m-1}}-n-m-(m-1)t+\frac{3}{2}
 \\&\geq(n+\frac{1}{2})^{2^{m-1}}-n-m-(m-1)(n+m-1)+\frac{3}{2}
 %=&(n+\frac{1}{2})^{2^{m-1}}-n-m-mn-m^2+m+n+m-1+\frac{3}{2}
 =(n+\frac{1}{2})^{2^{m-1}}-mn-m^2+m+\frac{1}{2}\\
 &\geq (2+\frac{1}{2})^{2^{m-1}}-2m-m^2+m+\frac{1}{2}
 \geq(2+\frac{1}{2})^{2}-4-4+2+\frac{1}{2}>0,
 \endaligned
 $$
and for $m=1$,
  $$
 \aligned
 |V(D^i_{0,n})|-(1+t)p_i-(\lceil \frac{n-1}{1+t}\rceil-p_i)
 %=n-tp_i-(\lceil \frac{n-1}{1+t}\rceil)
 %\geq n-t(\lceil \frac{n-1}{1+t}\rceil-1)-(\lceil \frac{n-1}{1+t}\rceil)\\
 %&=n-(1+t)(\lceil \frac{n-1}{1+t}\rceil)+t
 \geq t_{0,n}-(1+t)( \frac{n+t-1}{1+t}+1-1)+t>0,
 %\geq n-(1+t)(\frac{n+t-1}{1+t})+t
 %&=n-[n+t-1+1+t-1-2t]=1>0,
 \endaligned
 $$
which implies that $D^i_{m-1,n}-\mathcal{F}^i$ has a vertex that is connected to $D_{m-1,n}^{I_{m,n}\setminus S_1}-\mathcal {F}^{I_{m,n}\setminus S_1}$ for each $i\in S_1$ (see Figure \ref{L3}(b)). Thus $D_{m,n}-\mathcal{F}$ is connected.

\textbf{Case 2}. There is exactly one $D^i_{m-1,n}$ such that $D^i_{m-1,n}-\mathcal{F}^i$ is disconnected for $i\in S$.

Since $D^j_{m-1,n}-\mathcal{F}^j$ is connected for $j\in S$ and $j\neq i$, $D^{I_{m,n}\setminus \{i\}}_{m-1,n}-\mathcal{F}^{I_{m,n}\setminus \{i\}}$ is connected by the similar argument as \textbf{Case 1}.
By induction hypothesis, $|\mathcal{F}^i|\geq \lceil \frac{n-1}{1+t}\rceil+m-1$.
If $i\in S_1$, then each $\mathcal{F}^j$ has exactly one star-center and exactly one of leaves of that star in $\mathcal{F}^i$ for $j\in S_1\setminus \{i\}$, and each $\mathcal{F}^j$ has exactly one leaf of star and its center in $\mathcal{F}^i$ for $j\in S\setminus S_1$ (see Figure \ref{L3}(c)). By Lemma 3.1$(ii)$, each vertex of $D^i_{m-1,n}-\mathcal{F}^i$ has an outside neighbor in $D^{I_{m,n}\setminus S}_{m-1,n}$. Thus $D_{m,n}-\mathcal{F}$ is connected.
If $i\in S\setminus S_1$, then $S\setminus S_1=\{i\}$ and $|S_1|=\lceil\frac{n-1}{1+t}\rceil+m-1$. Otherwise, $|\mathcal{F}^i|< \lceil \frac{n-1}{1+t}\rceil+m-1$, a contradiction. For $j\in S_1$, we have that each $\mathcal{F}^j$ has exactly one star-center and one leaf of the star is in $\mathcal{F}^i$ (see Figure \ref{L3}(d)). Thus $\mathcal{F}^i$ has all outside neighbors of vertices which from different $D^j_{m-1,n}$s, $j\in S\setminus \{i\}$. By Lemma 3.1$(ii)$, each vertex of $D^i_{m-1,n}-\mathcal{F}^i$ has an outside neighbor in $D^{I_{m,n}\setminus S}_{m-1,n}$. Thus $D_{m,n}-\mathcal{F}$ is connected.
\end{proof}
\begin{figure}[!htbp]
\centering
\subfigure[]{\includegraphics[totalheight=4.3cm, width=7.5cm]{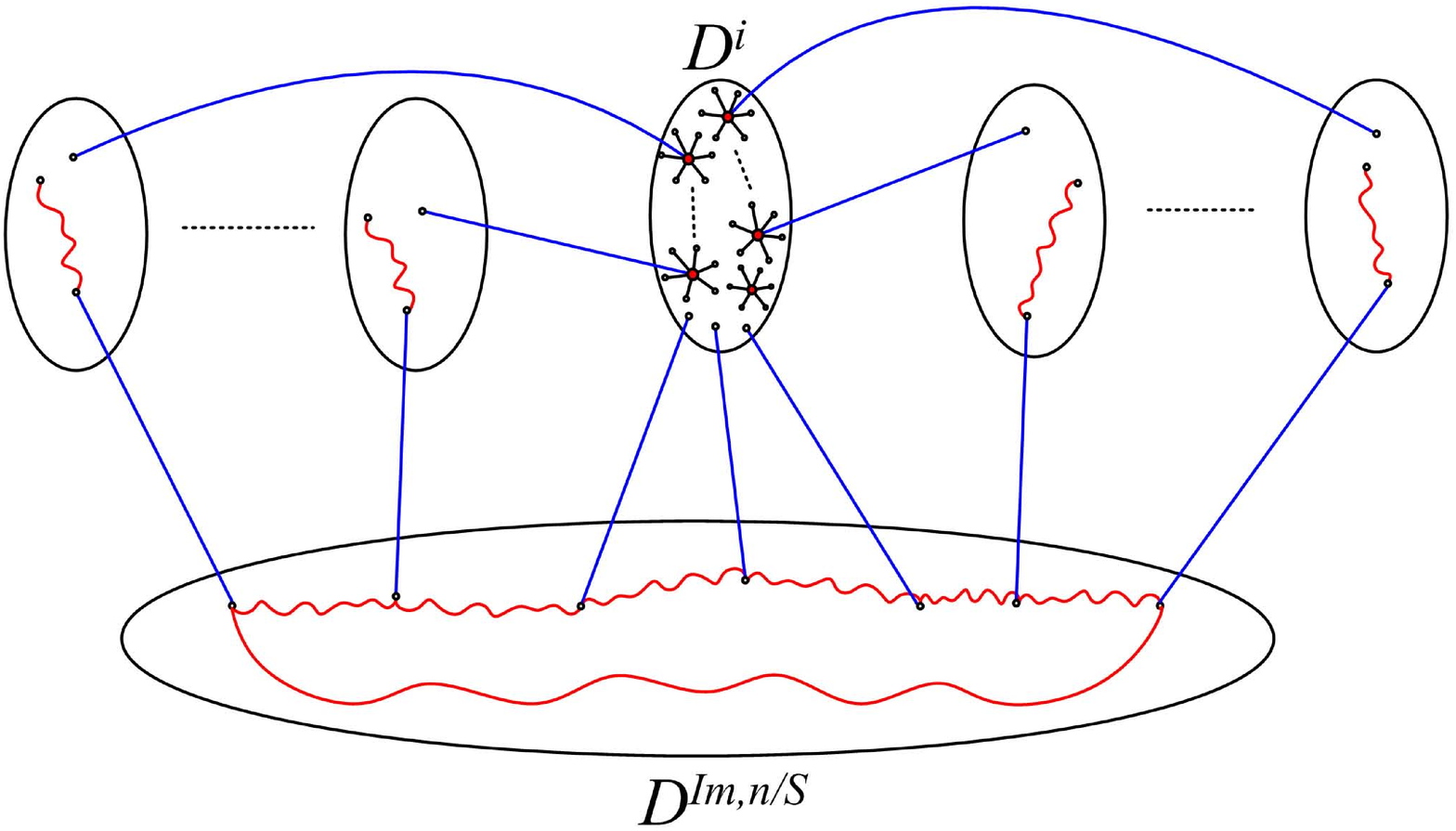}}
~~\subfigure[]{\includegraphics[totalheight=4.5cm, width=8cm]{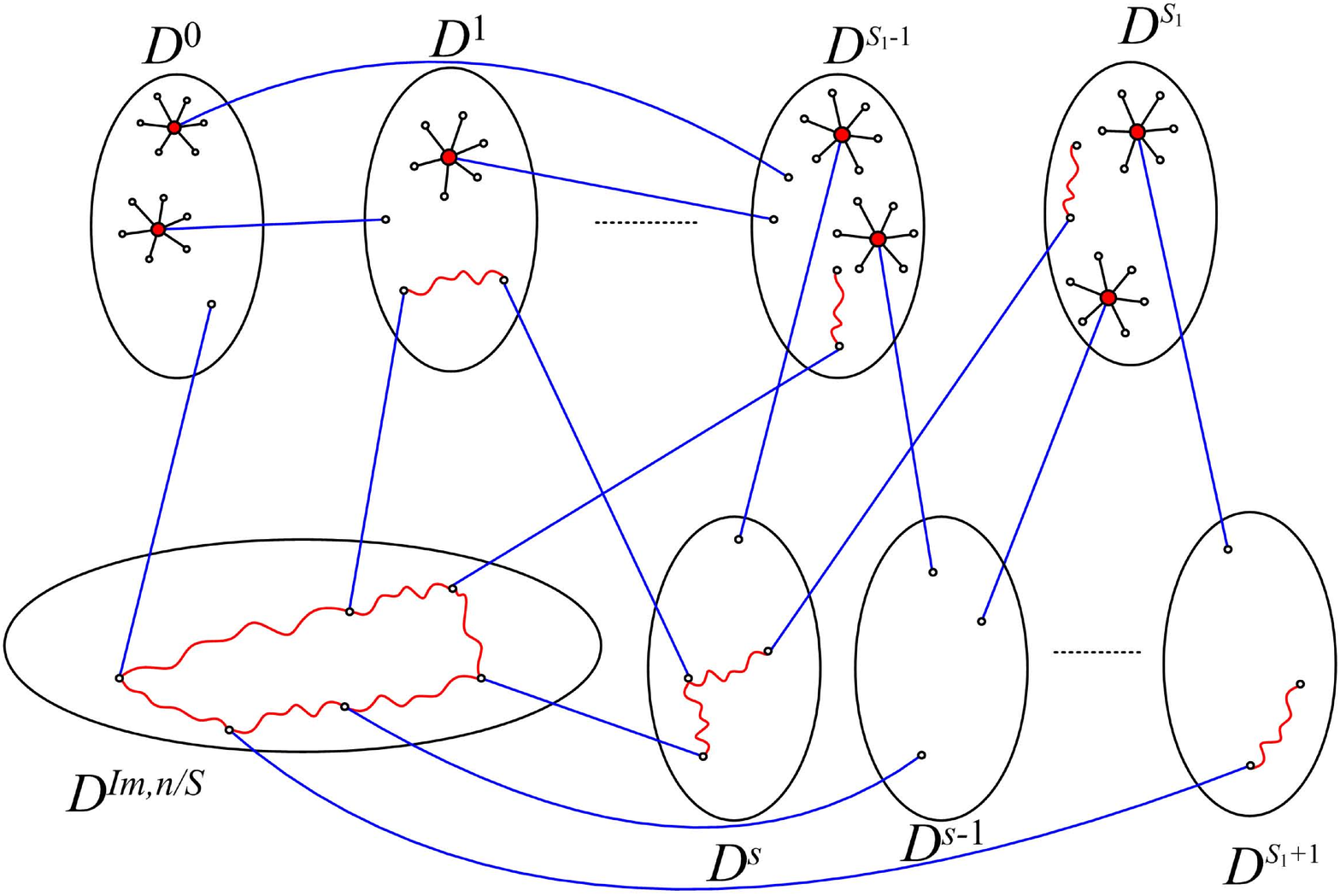}}\\
\subfigure[]{\includegraphics[totalheight=4.5cm, width=8.5cm]{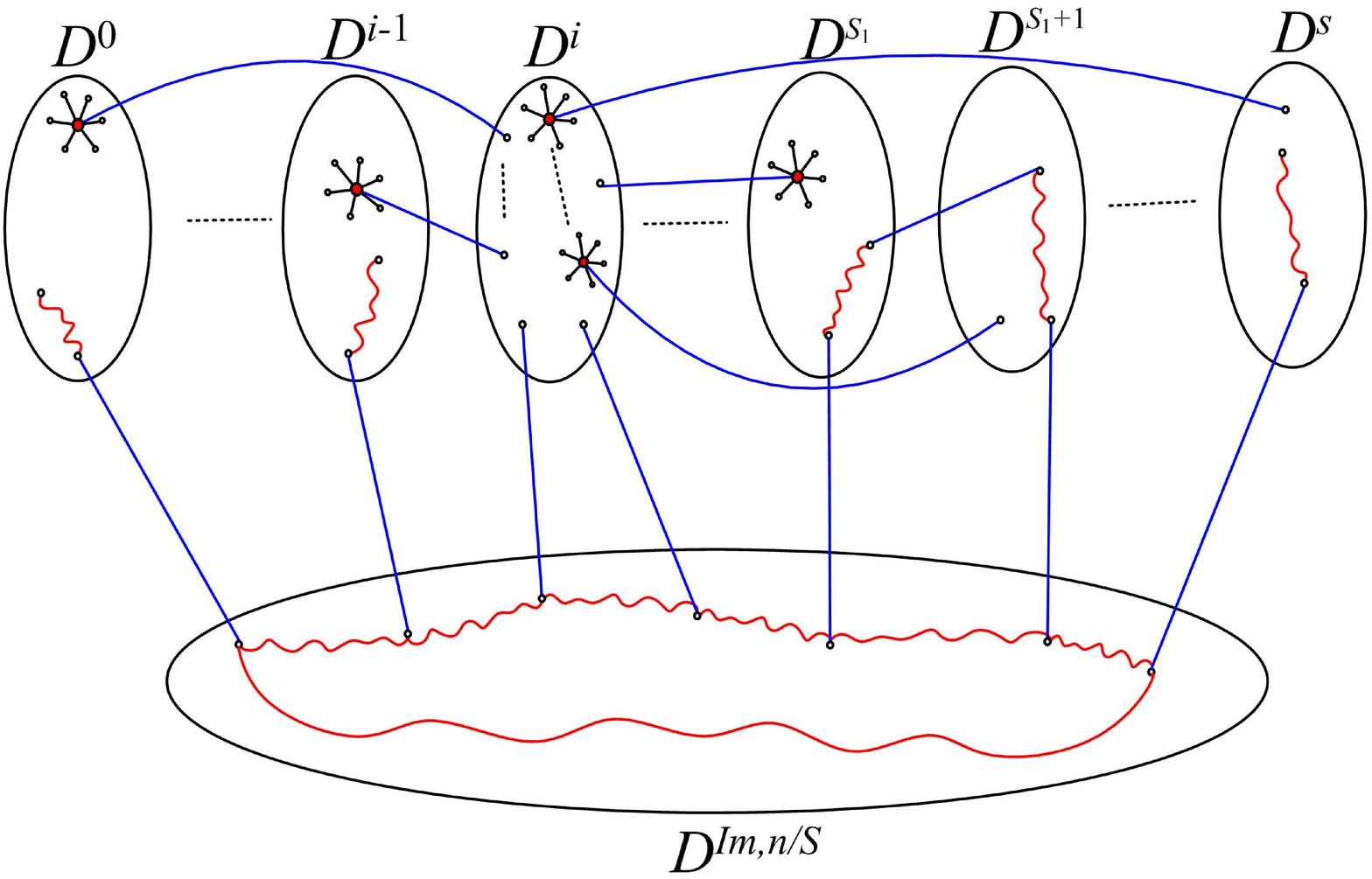}}
~~\subfigure[]{\includegraphics[totalheight=4.5cm, width=7cm]{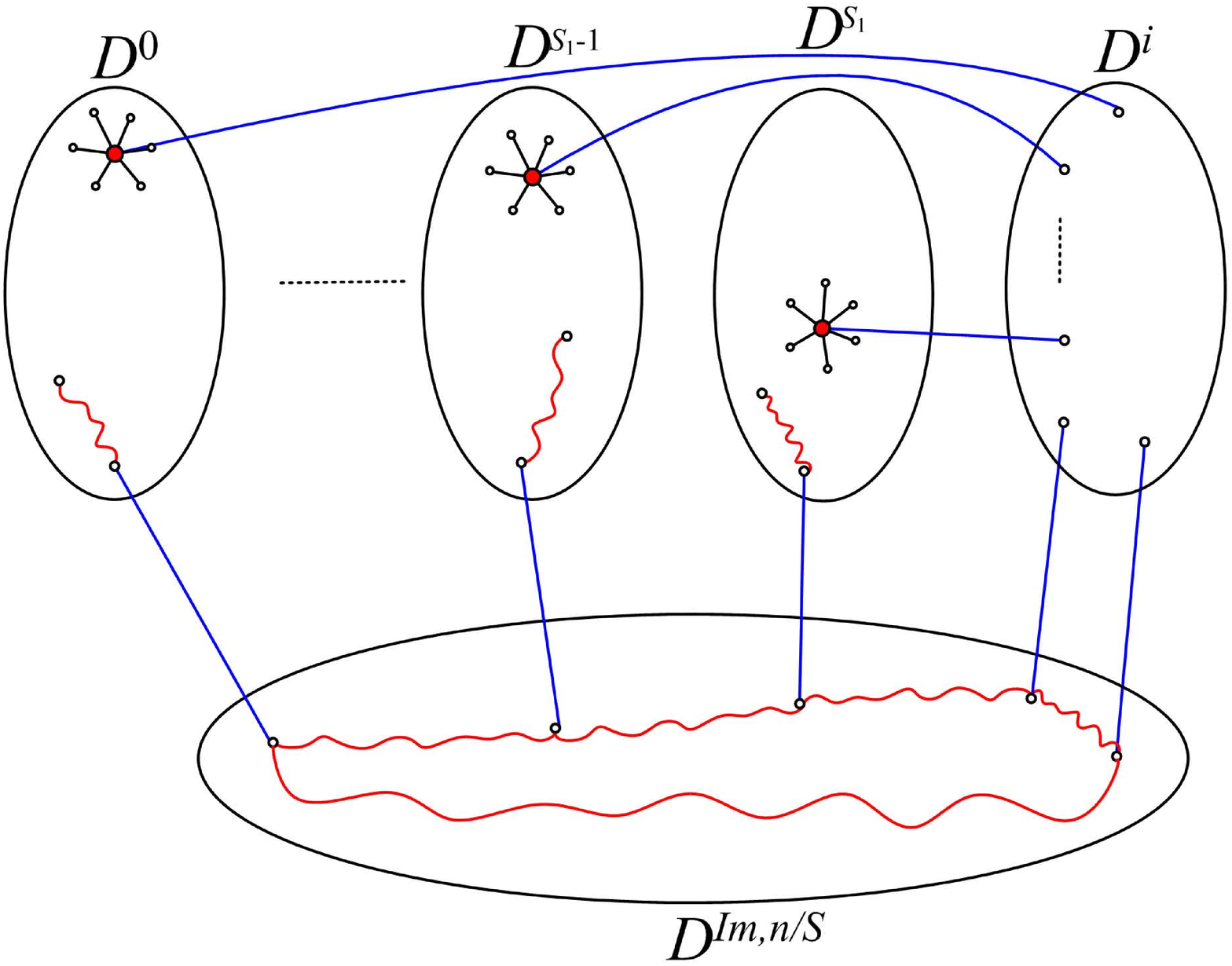}}\\
\caption{\label{L3}Illustration for Lemma 3.3.}
\end{figure}

\begin{Lem}$\kappa (D_{m,n}; K_{1,t})\leq \lceil \frac{n-1}{1+t}\rceil+m$ for $1\leq t\leq m+n-2.$
\end{Lem}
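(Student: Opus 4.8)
The plan is to prove the upper bound by exhibiting an explicit $K_{1,t}$-structure cut $\mathcal{F}$ of size $\lceil\frac{n-1}{1+t}\rceil+m$ whose removal isolates one vertex. The guiding observation is that in $D_{m,n}$ the neighborhood of a fixed vertex $v=v_mv_{m-1}\ldots v_0$ splits into two very different parts: the $n-1$ neighbors lying together with $v$ in the base complete graph $D_{0,n}\cong K_n$, which are mutually adjacent, and the $m$ ``cross-level'' outside neighbors $b_1,\ldots,b_m$, one arising at each recursive level $\ell\in[m]$, which sit in pairwise different copies. Since $D_{m,n}$ is $(m+n-1)$-regular by Lemma 3.1, deleting all of $N(v)$ while keeping $v$ isolates $v$ and disconnects $D_{m,n}$, because $t_{m,n}$ is large by Lemma 3.2. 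Thus the whole task reduces to covering $N(v)$ by subgraphs isomorphic to $K_{1,t}$, and the ``$+m$'' reflects that no two of the $b_\ell$ can be swept into one star without using $v$ itself.

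First I would cover the $n-1$ base neighbors. As they are mutually adjacent in $K_n$, I group them into $\lceil\frac{n-1}{1+t}\rceil$ stars, each taking a base neighbor as center and further base neighbors as leaves. Whenever a star falls short of $t$ leaves---either because $1+t$ does not divide $n-1$, or because $t>n-2$ so a single center cannot reach $t$ other base neighbors---I complete it to a full $K_{1,t}$ by adjoining leaves among the center's $m$ cross neighbors; the hypothesis $t\le(n-2)+m=m+n-2$ guarantees enough candidates, and the extreme case $t=m+n-2$ is exactly tight. The one conceptual point to get right here is that the definition of an $H$-structure cut does \emph{not} require the members of $\mathcal{F}$ to be pairwise vertex-disjoint, so leaves may be reused across different stars; this is precisely what allows $\lceil\frac{n-1}{1+t}\rceil$ stars to cover all $n-1$ base neighbors even when their nominal vertex budget exceeds the supply, as already happens for $m=0$ with $G=K_n$. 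Throughout I keep $v$ out of every star, as center and as leaf.

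Next I would cover the cross neighbors with one star apiece: for each $\ell\in[m]$, center a star at $b_\ell$ and choose any $t$ of its $m+n-2$ neighbors other than $v$ as leaves, again allowing overlaps. Setting $\mathcal{F}$ to be the union of the $\lceil\frac{n-1}{1+t}\rceil$ base-covering stars and the $m$ cross-covering stars yields $|\mathcal{F}|=\lceil\frac{n-1}{1+t}\rceil+m$ with $N(v)\subseteq V(\mathcal{F})$ and $v\notin V(\mathcal{F})$. Hence $D_{m,n}-\mathcal{F}$ contains $v$ as an isolated vertex alongside the nonempty remainder (nonempty since $|V(\mathcal{F})|\le(1+t)(\lceil\frac{n-1}{1+t}\rceil+m)$ is negligible against $t_{m,n}$), so it is disconnected and $\mathcal{F}$ is a genuine $K_{1,t}$-structure cut.

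The routine verifications---that every constructed subgraph is honestly isomorphic to $K_{1,t}$, that each chosen center admits enough admissible neighbors to be padded, and that $v$ is never absorbed into $V(\mathcal{F})$---are where I expect the only real care to be needed, in particular padding the last base star and handling the boundary regime $t=m+n-2$. None of this is deep once one exploits that members of $\mathcal{F}$ may overlap, which removes any vertex-budget obstruction to filling out short stars. Combined with the lower bound $\kappa^{s}(D_{m,n};K_{1,t})\ge\lceil\frac{n-1}{1+t}\rceil+m$ from Lemma 3.3 and the inequality $\kappa^{s}\le\kappa$, this will pin down the exact structure connectivity.
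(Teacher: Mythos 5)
Your proposal is correct and takes essentially the same approach as the paper: both isolate a fixed vertex by covering its $n-1$ base-clique neighbors with $\lceil\frac{n-1}{1+t}\rceil$ stars and its $m$ cross-level neighbors with one star apiece, each centered at the cross neighbor with $t$ of its own neighbors (avoiding the isolated vertex) as leaves, which is exactly where the hypothesis $t\leq m+n-2$ is used. A minor point in your favor: your device of padding a short base star with cross neighbors of its center handles the regime $t>n-2$ cleanly, whereas the paper's explicit last star $\{00\ldots 0(n-1);\ 00\ldots 0k \mid n-t-1\leq k\leq n-2\}$ only makes sense when $t\leq n-2$ and would need exactly your padding argument to cover $n-1\leq t\leq m+n-2$.
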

\begin{proof} Given two vertices $u=00\ldots 00$ and $v=11\ldots 11$ in $D_{m,n}$. Then $N_{D_{m,n}}(u)=\{00\ldots 0i|i\in [n-1]\}\cup \{10\ldots 00, 01\ldots 00, \ldots, 00\ldots 10 \}$. Since $D_{m,n}[\{00\ldots 0i|i\in [n-1]\}]$ is a complete graph $K_{n-1}$ by Definition 1, we need $\lceil\frac{n-1}{1+t}\rceil$ $K_{1,t}$'s to cover all vertices of $D_{m,n}[\{00\ldots 0i|i\in [n-1]\}]$. Then we set
\begin{equation*}
S_i=\left\{
\begin{aligned}
&\{00\ldots 0[(i-1)t+i]; 00\ldots 0[(i-1)t+i+k]|1\leq k\leq t\},~~for~~ 1\leq i\leq \lfloor\frac{n-1}{1+t}\rfloor;\\
&\{00\ldots 0(n-1); 00\ldots 0k|n-t-1\leq k\leq n-2\},~~for~~i=\lceil\frac{n-1}{1+t}\rceil.\\
\end{aligned}
\right.
\end{equation*}
Next, we will construct several stars $K_{1,t}$'s to cover the vertex subset $\{10\ldots 00, 01\ldots 00, \ldots, 00\ldots 10 \}$. For $1\leq j\leq m$, let $S_{\lceil\frac{n-1}{1+t}\rceil+j}$ be a star with center $0\ldots 010\ldots 0$ ($u_j=1$) and its leaves are the neighbors of $0\ldots 010\ldots 0$ except $u$ (see Figure 3(a)).

Let $\mathcal{F}=\{S_i|i\in [\lceil \frac{n-1}{1+t}\rceil+m]\}$.
Since $N_{D_{m,n}}(u)\subseteq V(\mathcal{F})$ and $v\in V(D_{m,n}-\mathcal{F})$, $D_{m,n}-\mathcal{F}$ has two components. Then $u$ is a singleton and $v$ belongs to the other component. Thus $\kappa (D_{m,n}; K_{1,t})\leq |\mathcal{F}|=\lceil \frac{n-1}{1+t}\rceil+m$.

\end{proof}

Lemmas 3.3 and 3.4 imply that  $\lceil \frac{n-1}{1+t}\rceil+m\leq \kappa^s (D_{m,n}; K_{1,t})\leq \kappa (D_{m,n}; K_{1,t})\leq \lceil \frac{n-1}{1+t}\rceil+m$ for $1\leq t\leq m+n-2$. Then following result holds.
\begin{The}$\kappa (D_{m,n}; K_{1,t})=\kappa^s(D_{m,n}; K_{1,t})=\lceil \frac{n-1}{1+t}\rceil+m$ for $1\leq t\leq m+n-2$.
\end{The}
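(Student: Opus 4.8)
The plan is to obtain the two claimed equalities by sandwiching the common value $\lceil \frac{n-1}{1+t}\rceil+m$ between a lower bound on the substructure connectivity and a matching upper bound on the structure connectivity, and then invoking the elementary monotonicity $\kappa^{s}(G;H)\leq \kappa(G;H)$ recorded in the introduction. Thus the theorem itself carries no independent difficulty: it is a purely formal consequence of Lemmas 3.3 and 3.4 together with the definitional inequality.

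First I would call on Lemma 3.3, which gives $\kappa^s(D_{m,n};K_{1,t})\geq \lceil \frac{n-1}{1+t}\rceil+m$ for the wider range $1\leq t\leq m+n-1$; in particular this covers the range $1\leq t\leq m+n-2$ needed here, since $[1,m+n-2]\subseteq[1,m+n-1]$. This is the substantive lower bound, furnished by the induction on the dimension $m$ together with the recursive decomposition in Eq. (\ref{Eq}) and the growth estimate $t_{m,n}\geq (n+\frac{1}{2})^{2^m}-\frac{1}{2}$ from Lemma 3.2.

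Next, by the definitions of structure and substructure cuts (every $K_{1,t}$-structure cut is in particular a $K_{1,t}$-substructure cut), we have $\kappa^s(D_{m,n};K_{1,t})\leq \kappa(D_{m,n};K_{1,t})$. Finally, Lemma 3.4 supplies the matching upper bound $\kappa(D_{m,n};K_{1,t})\leq \lceil \frac{n-1}{1+t}\rceil+m$ for $1\leq t\leq m+n-2$, via the explicit family of stars that separates the vertex $u=00\ldots 0$ as a singleton. Chaining the three inequalities yields
\begin{equation*}
\left\lceil \tfrac{n-1}{1+t}\right\rceil+m\leq \kappa^s(D_{m,n};K_{1,t})\leq \kappa(D_{m,n};K_{1,t})\leq \left\lceil \tfrac{n-1}{1+t}\right\rceil+m,
\end{equation*}
so all three middle and outer terms coincide and both equalities follow at once.

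Consequently there is no genuine obstacle at the level of the theorem: the entire burden has already been discharged in the inductive connectivity argument of Lemma 3.3. That is where I would expect any real subtlety to lie, specifically in the case analysis distinguishing whether a copy $D^i_{m-1,n}$ contains star-centers or merely leaves, and in verifying—through Lemma 3.1(ii) and the counting bounds of Lemma 3.2—that after removing $\mathcal{F}$ enough copies remain untouched and enough cross-copy edges survive to keep $D_{m,n}-\mathcal{F}$ connected. The only bookkeeping point worth flagging in the present proof is the alignment of the $t$-ranges noted above, which is immediate.
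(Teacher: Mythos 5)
Your proposal matches the paper's proof exactly: the paper also sandwiches $\lceil \frac{n-1}{1+t}\rceil+m$ between the lower bound of Lemma 3.3 (valid on the wider range $1\leq t\leq m+n-1$), the definitional inequality $\kappa^s(D_{m,n};K_{1,t})\leq \kappa(D_{m,n};K_{1,t})$, and the upper bound of Lemma 3.4. Your observation that all the real work lies in the inductive argument of Lemma 3.3 is also consistent with how the paper structures its material.
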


\begin{Lem} $\kappa (D_{m,n}; K_s)\geq \lceil\frac{n-1}{s}\rceil+m$ for $3\leq s\leq n-1$.
\end{Lem}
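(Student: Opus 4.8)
The plan is to mirror the inductive scheme of Lemma 3.3, proving by induction on $m$ that no family $\mathcal{F}$ consisting of exactly $\lceil\frac{n-1}{s}\rceil+m-1$ copies of $K_s$ can be a $K_s$-structure cut, i.e.\ that $D_{m,n}-\mathcal{F}$ stays connected and nontrivial. For the base case $m=0$ we have $D_{0,n}\cong K_{n}$; since each member contributes $s$ vertices, such an $\mathcal{F}$ deletes at most $s(\lceil\frac{n-1}{s}\rceil-1)\le n-2<n-1=\kappa(K_n)$ vertices, so $D_{0,n}-\mathcal{F}$ is connected, exactly as in the base step of Lemma 3.3.

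The decisive structural observation, and what makes $K_s$ with $s\ge 3$ easier to treat than the stars of Lemma 3.3, is that each member of $\mathcal{F}$ lies entirely inside a single copy $D^{i}_{m-1,n}$. Indeed, if a copy of $K_s$ met two copies $D^{i}_{m-1,n}$ and $D^{j}_{m-1,n}$, completeness would force more than one edge between those copies, and in fact would force some vertex to have two outside neighbours, contradicting Lemma 3.1$(ii)$. Writing $\mathcal{F}^{i}=\mathcal{F}\cap D^{i}_{m-1,n}$ we thus obtain a genuine partition with $\sum_i|\mathcal{F}^{i}|=|\mathcal{F}|$. As in Lemma 3.3 I would then record two facts: by Lemma 3.2 there is an untouched copy, so the intact copies form a connected backbone; and by the induction hypothesis at most one copy $D^{i_0}_{m-1,n}$ can have $D^{i_0}_{m-1,n}-\mathcal{F}^{i_0}$ disconnected, since two such copies would each require $\lceil\frac{n-1}{s}\rceil+m-1$ members, whose disjoint total would exceed $|\mathcal{F}|$.

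I would then split into the same two cases. Case~2 (exactly one copy $D^{i_0}_{m-1,n}-\mathcal{F}^{i_0}$ disconnected) is actually clean: then $\mathcal{F}=\mathcal{F}^{i_0}$, every other copy is intact and mutually connected, and since each vertex of $D^{i_0}_{m-1,n}-\mathcal{F}^{i_0}$ keeps its unique outside neighbour in an untouched copy by Lemma 3.1$(ii)$, every surviving piece of the bad copy reattaches to the backbone, so $D_{m,n}-\mathcal{F}$ is connected. Case~1 (all $D^{i}_{m-1,n}-\mathcal{F}^{i}$ connected) is the main obstacle, and here I expect to argue by contradiction. If $D_{m,n}-\mathcal{F}$ were disconnected, then, since each copy's remnant is connected and the intact copies all lie in one component, some copy $j$ would have its whole nonempty remnant $R_j$ separated from the backbone. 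Because every surviving vertex of $R_j$ has its unique outside neighbour in a distinct copy, separation forces each of those $|V(R_j)|$ target copies to contain a deleted vertex; but deleted vertices occupy at most $|S|-1$ copies other than $j$, whence $|V(R_j)|\le |S|-1$.

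Combining $|V(R_j)|=t_{m-1,n}-|V(\mathcal{F}^{j})|$ with $|V(\mathcal{F}^{j})|\le s|\mathcal{F}^{j}|$ and the counting bound $|S|\le 1+(|\mathcal{F}|-|\mathcal{F}^{j}|)$ (each of the other touched copies holds at least one of the $K_s$'s not in copy $j$) yields $t_{m-1,n}\le s|\mathcal{F}|-s+1=s(\lceil\frac{n-1}{s}\rceil+m-2)+1$, and the doubly-exponential lower bound on $t_{m-1,n}$ from Lemma 3.2 contradicts this for every $m\ge 1$ and $3\le s\le n-1$. The hard part is precisely the bookkeeping in this final inequality: the crude estimate $s|\mathcal{F}|<t_{m-1,n}$ is too weak to rule out disconnection at the first inductive step $m=1$ (where $t_{0,n}=n$ is small), so the refinement through $|S|\le 1+(|\mathcal{F}|-|\mathcal{F}^{j}|)$, which fully exploits the confinement of each $K_s$ to a single copy together with the ``distinct outside neighbour'' count, is what I expect to require the most care.
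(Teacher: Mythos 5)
Your proposal is correct and follows essentially the same route as the paper's proof: induction on $m$, confinement of each $K_s$ to a single copy $D^i_{m-1,n}$, existence of an untouched copy guaranteed by Lemma 3.2, and the same count of unique outside neighbours against touched copies---your final inequality $t_{m-1,n}\le s|\mathcal{F}|-s+1$ is exactly the paper's displayed bound $t_{m-1,n}-(s-1)p_i-(\lceil\frac{n-1}{s}\rceil+m-1)>0$ with $p_i\le |\mathcal{F}|-1$, read contrapositively. The only cosmetic difference is organizational: you split cases by whether some copy's remnant is disconnected, while the paper splits by whether all members of $\mathcal{F}$ lie in one copy, but the underlying arguments coincide.
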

\begin{proof} Induction on $m$. For $m=0$, $D_{0,n}\cong K_n$. Assume $\kappa (D_{0,n}; K_s)\leq \lceil\frac{n-1}{s}\rceil-1$ for $3\leq s\leq n-1$. Then there exists a $K_s$-structure cut $\mathcal{F'}$ of $D_{0,n}$, $|\mathcal{F'}|\leq \lceil\frac{n-1}{s}\rceil-1.$ We have
\begin{equation*}
|V(\mathcal{F'})|\leq s\cdot (\lceil\frac{n-1}{s}\rceil-1)\leq s\cdot (\frac{n+s-2}{s}-1)=n-2<n-1.
\end{equation*}
Since $\kappa (D_{0,n})=n-1$, $|V(\mathcal{F'})|< \kappa (D_{0,n})$, a contradiction. Then $\kappa (D_{0,n}; K_s)\geq \lceil\frac{n-1}{s}\rceil$ for $3\leq s\leq n-1$.

 Assume $\kappa(D_{m-1,n}; K_s)\geq  \lceil\frac{n-1}{s}\rceil+m-1$ for $3\leq s\leq n-1$.
 Now, we consider $D_{m,n}$. Let $\mathcal{F}=\{T_i|1\leq i\leq \lceil\frac{n-1}{s}\rceil+m-1\}$ be a set such that $T_i\cong K_s$ for $1\leq i\leq \lceil\frac{n-1}{s}\rceil+m-1$. We need to show that $D_{m,n}-\mathcal{F}$ is connected.
Let $\mathcal{F}^i=\mathcal{F}\cap D^i_{m-1, n}$,
$\mathcal{F}^{T}=\cup_{i\in T}\mathcal{F}^i$ and $D_{m-1,n}^{T}=\cup_{i\in T} D^i_{m-1,n}$ by Eq. (\ref{Eq}).
We know that a complete graph $K_s$ for each $3\leq s\leq n-1$ is in exactly one $D^i_{m-1,n}$ by Definition 1. Let $p_i$ be the number of $K_s$'s in $D^i_{m-1,n}$.
If $p_i=\lceil\frac{n-1}{s}\rceil+m-1$ for some $i\in I_{m,n}$, then $\mathcal{F}^{I_{m,n}\setminus \{i\}}=\emptyset$. Thus $D^{I_{m,n}\setminus \{i\}}_{m-1,n}-\mathcal{F}^{I_{m,n}\setminus \{i\}}$ is connected. By Definition 1, each vertex of $D^i_{m-1,n}-\mathcal{F}^i$ has an outside neighbor in
$D^{I_{m,n}\setminus \{i\}}_{m-1,n}$. Then $D_{m,n}-\mathcal{F}$ is connected.

If $p_i\leq \lceil\frac{n-1}{s}\rceil+m-2$ for some $i\in I_{m,n}$, then $D^i_{m-1,n}-\mathcal {F}^i$ is connected by induction hypothesis. Since $|\mathcal{F}|=\lceil\frac{n-1}{s}\rceil+m-1$,
there are at most $(\lceil\frac{n-1}{s}\rceil+m-1)$ $D^i_{m-1,n}$'s such that $\mathcal{F}^i\neq \emptyset$. Let $X=\{0,1,\ldots, x\}$ be a set such that $\mathcal{F}^i\neq\emptyset$ for $i\in X$. Then $|X| \leq \lceil \frac{n-1}{s}\rceil+m-1$.
By Lemma 3.2,
$$
\aligned
&t_{m-1,n}+1-|X|
\geq t_{m-1,n}+1-(\lceil \frac{n-1}{s}\rceil+m-1)
\geq t_{m-1,n}+1-(\frac{n+s-2}{s}+m-1)\\
&\geq (n+\frac{1}{2})^{2^{m-1}}-\frac{1}{2}+1-(\frac{n+1}{3}+m-1)
\geq n+1-\frac{n+1}{3}
\geq 2,
\endaligned
$$
which implies that there exist $D^i_{m-1,n}$'s such that $\mathcal{F}^i=\emptyset$ for $i\in I_{m,n}\setminus X$.
Then $\mathcal{F}^{I_{m,n}\setminus X}=\emptyset$ and $D_{m-1,n}^{I_{m,n}\setminus X}-\mathcal{F}^{I_{m,n}\setminus X}$ is connected.
By Lemma 3.2, we find, for $i\in X$, $m\geq 1$ and $n\geq 2$,
$$
 \aligned
 &|V(D^i_{m-1,n})|-sp_i-(\lceil \frac{n-1}{s}\rceil+m-1-p_i)
 =t_{m-1,n}-(s-1)p_i-(\lceil \frac{n-1}{s}\rceil+m-1)\\&\geq
t_{m-1,n}-(s-1)(\lceil \frac{n-1}{s}\rceil+m-2)-(\lceil \frac{n-1}{s}\rceil+m-1)
=t_{m-1,n}-s(\lceil \frac{n-1}{s}\rceil+m-2)-1
 \\&\geq t_{m-1,n}-s(\frac{n+s-2}{s}+m-2)-1
 \geq (n+\frac{1}{2})^{2^{m-1}}-\frac{1}{2}-n-(m-1)s+1
 \\&\geq(n+\frac{1}{2})^{2^{m-1}}-n-(m-1)(n-1)+\frac{1}{2}
 =(n+\frac{1}{2})-n+\frac{1}{2}=1>0,
 %\geq& (n+\frac{1}{2})^{2^{m-1}}-n-m-(m-1)(n+m-1)+\frac{3}{2}
 %=&(n+\frac{1}{2})^{2^{m-1}}-n-m-mn-m^2+m+n+m-1+\frac{3}{2}
 %=(n+\frac{1}{2})^{2^{m-1}}-mn-m^2+m+\frac{1}{2}\\
 %\geq& (2+\frac{1}{2})^{2^{m-1}}-2m-m^2+m+\frac{1}{2}
 %\geq(n+\frac{1}{2})-n+\frac{1}{2}=1>0,
 \endaligned
 $$
% and for $m=1$,$$\aligned|V(D^i_{0,n})|-sp_i-(\lceil \frac{n-1}{s}\rceil-p_i)
 %=n-tp_i-(\lceil \frac{n-1}{1+t}\rceil)
 %\geq n-t(\lceil \frac{n-1}{1+t}\rceil-1)-(\lceil \frac{n-1}{1+t}\rceil)\\
 %&=n-(1+t)(\lceil \frac{n-1}{1+t}\rceil)+t\geq n-s(\frac{n+s-2}{s}-1)-1>0,
 %&=n-[n+t-1+1+t-1-2t]=1>0,\endaligned $$
which implies that $D^i_{m-1,n}-\mathcal{F}^i$ has a vertex that is connected to $D_{m-1,n}^{I_{m,n}\setminus X}$ for each $i\in X$. Since $D^i_{m-1,n}-\mathcal{F}^i$ is connected for $i\in X$, $D_{m,n}-\mathcal{F}$ is connected.
\end{proof}

\begin{Lem}$\kappa (D_{m,n}; K_s)\leq \lceil\frac{n-1}{s}\rceil+m$ for $3\leq s\leq n-1$.
\end{Lem}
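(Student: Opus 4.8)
The plan is to mirror the explicit isolating construction used in Lemma 3.4, but with complete graphs $K_s$ in place of stars. First I would fix the two vertices $u=00\ldots00$ and $v=11\ldots11$ of $D_{m,n}$ and recall from the proof of Lemma 3.4 that $N_{D_{m,n}}(u)=\{00\ldots 0i\,|\,i\in[n-1]\}\cup\{10\ldots00,01\ldots00,\ldots,00\ldots10\}$. The first block $\{00\ldots0i\,|\,i\in[n-1]\}$ together with $u$ lies in a single bottom copy $D_{0,n}\cong K_n$, so by Definition 1 it induces a complete graph $K_{n-1}$; the remaining $m$ vertices are the outside neighbors of $u$, one for each level $j\in[m]$, namely the vertex $w_j$ with $x_j=1$ and all other coordinates $0$.

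Next I would cover the inner block $K_{n-1}$ by $K_s$'s. Since these $n-1$ vertices are pairwise adjacent, every $s$-subset of them induces a $K_s$, so I would split the set $\{00\ldots0i\,|\,i\in[n-1]\}$ into $\lceil\frac{n-1}{s}\rceil$ subsets of size $s$ (allowing the final block to overlap an earlier one when $s\nmid(n-1)$, exactly as the stars $S_i$ overlap in Lemma 3.4). This uses $\lceil\frac{n-1}{s}\rceil$ copies and deletes all inner neighbors of $u$ without deleting $u$ itself, which is possible precisely because $s\leq n-1$.

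Then, for each level $j\in[m]$, I would engulf the outside neighbor $w_j$ in its own $K_s$. Here the key structural fact is that $w_j$ lies in a bottom copy $D_{0,n}\cong K_n$, namely the set of vertices agreeing with $w_j$ in all of $x_m,\ldots,x_1$ and having $x_0\in\langle n-1\rangle$ arbitrary; this copy is disjoint from $u$'s bottom copy and from the copies containing the other $w_{j'}$. Choosing $w_j$ together with any $s-1$ further vertices of that copy yields a $K_s$ containing $w_j$ and avoiding $u$; doing this for every $j$ contributes $m$ more copies. Setting $\mathcal{F}$ to be the union of these $\lceil\frac{n-1}{s}\rceil+m$ copies of $K_s$, every neighbor of $u$ has been deleted while $u$ and $v$ survive, so $D_{m,n}-\mathcal{F}$ has $u$ as a singleton component and is disconnected. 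Hence $\kappa(D_{m,n};K_s)\leq|\mathcal{F}|=\lceil\frac{n-1}{s}\rceil+m$.

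I expect the only delicate points to be bookkeeping rather than genuine obstacles: verifying that each chosen $s$-subset really does induce a complete $K_s$ (which reduces to the observation that both the inner neighbors and each individual outside neighbor lie inside a complete bottom copy $D_{0,n}\cong K_n$ with $s\leq n-1<n$, using Lemma 3.1$(ii)$ to locate the outside neighbors), handling the non-divisible case by letting the final inner $K_s$ overlap a previous one, and confirming throughout that $u$ is excluded from every member of $\mathcal{F}$ so that it is genuinely isolated.
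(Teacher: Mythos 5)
Your proposal is correct and follows essentially the same route as the paper's proof: the paper likewise covers the inner $K_{n-1}$ of neighbors of $u$ by $\lceil\frac{n-1}{s}\rceil$ copies of $K_s$ (letting the last block overlap when $s\nmid n-1$), and for each $j\in[m]$ it takes the $K_s$ $\{0\ldots010\ldots0k\mid 0\leq k\leq s-1\}$ inside the bottom copy $D_{0,n}\cong K_n$ containing the outside neighbor, which is exactly your ``$w_j$ plus $s-1$ vertices of its bottom copy'' with a concrete choice of those vertices. The isolation of $u$ and the survival of $v=11\ldots11$ are then argued identically, so there is no gap.
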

\begin{proof} Given two vertices $u=00\ldots 00$ and $v=11\ldots 11$ in $D_{m,n}$. Then $N_{D_{m,n}}(u)=\{00\ldots 0i|i\in [n-1]\}\cup \{10\ldots 00, 01\ldots 00, \ldots, 00\ldots 10 \}$. Let $S_i=\{x_i|1\leq i\leq s\}$ be a complete graph $K_s$. Since $D_{m,n}[\{00\ldots 0i|i\in [n-1]\}]$ is a complete graph $K_{n-1}$ by Definition 1, we need $\lceil\frac{n-1}{s}\rceil$ $K_s$'s to cover all vertices of $D_{m,n}[\{00\ldots 0i|i\in [n-1]\}]$. Then we set
\begin{equation*}
S_i=\left\{
\begin{aligned}
&\{00\ldots 0[(i-1)s+k+1]|0\leq k\leq s-1\},~~for~~ 1\leq i\leq \lfloor\frac{n-1}{s}\rfloor;\\
&\{00\ldots 0k|n-s\leq k\leq n-1\},~~for~~i=\lceil\frac{n-1}{s}\rceil.\\
\end{aligned}
\right.
\end{equation*}

Next, we will construct several complete graphs $K_{s}$'s to cover the vertex subset $\{10\ldots 00, 01\ldots 00, \ldots, 00\ldots 10 \}$. For each $j\in [m]$,
let $S_{\lceil\frac{n-1}{s}\rceil+j}$ be a $K_s$ with vertices $0\ldots 010\ldots 0k$ ($u_j=1$ and $u_0=k$) for $0\leq k\leq s-1$.
Precisely, $S_{\lceil\frac{n-1}{s}\rceil+1}=\{00\ldots 01k|0\leq k\leq s-1\}$,
$S_{\lceil\frac{n-1}{s}\rceil+2}=\{00\ldots 10k|0\leq k\leq s-1\}$, analogously, $S_{\lceil\frac{n-1}{s}\rceil+m}=\{10\ldots 00k|0\leq k\leq s-1\}$ (see Figure 3(b)).

%It is easy to find $D_{m,n}[S_i]\cong K_{s}$ for each $i\in [\lceil \frac{n-1}{s}\rceil]$ and $D_{m,n}[S'_j]\cong K_{s}$ for each $j\in[m]$.
Let $\mathcal{F}=\{S_i|i\in [\lceil \frac{n-1}{s}\rceil+m]\}$.
Since $N_{D_{m,n}}(u)\subseteq V(\mathcal{F})$ and $v\in V(D_{m,n}-\mathcal{F})$, $D_{m,n}-\mathcal{F}$ has two components. Then $u$ is a singleton and $v$ belongs to the other component. Thus $\kappa (D_{m,n}; K_s)\leq |\mathcal{F}|=\lceil \frac{n-1}{s}\rceil+m$.
\end{proof}
\begin{figure}[!htbp]\label{}
\centering
\subfigure[A $K_{1,4}$-structure cut of $D_{6,8}$.]{\includegraphics[totalheight=8cm, width=8cm]{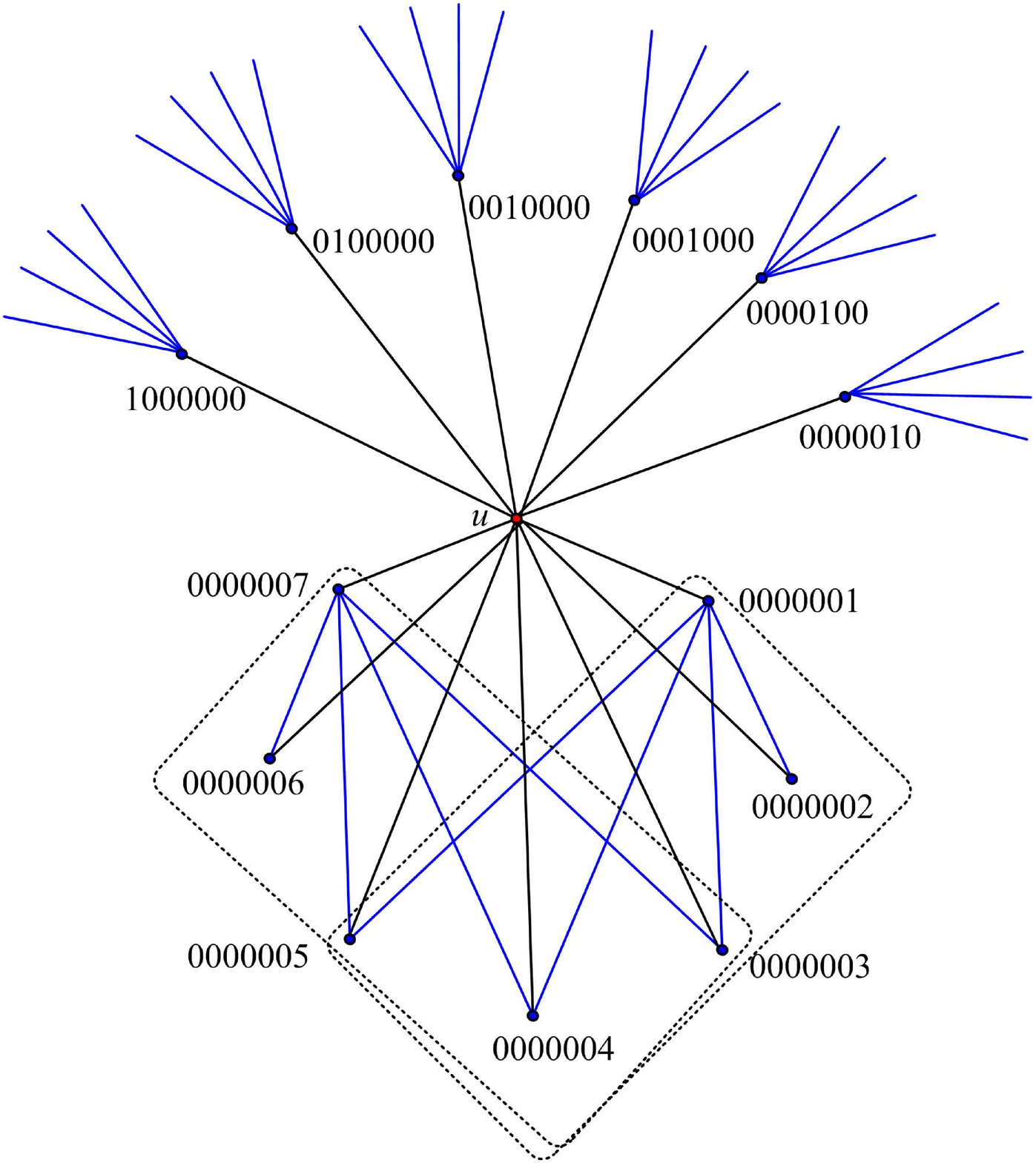}}
~~\subfigure[A $K_{5}$-structure cut of $D_{6,8}$.]{\includegraphics[totalheight=8cm, width=7.6cm]{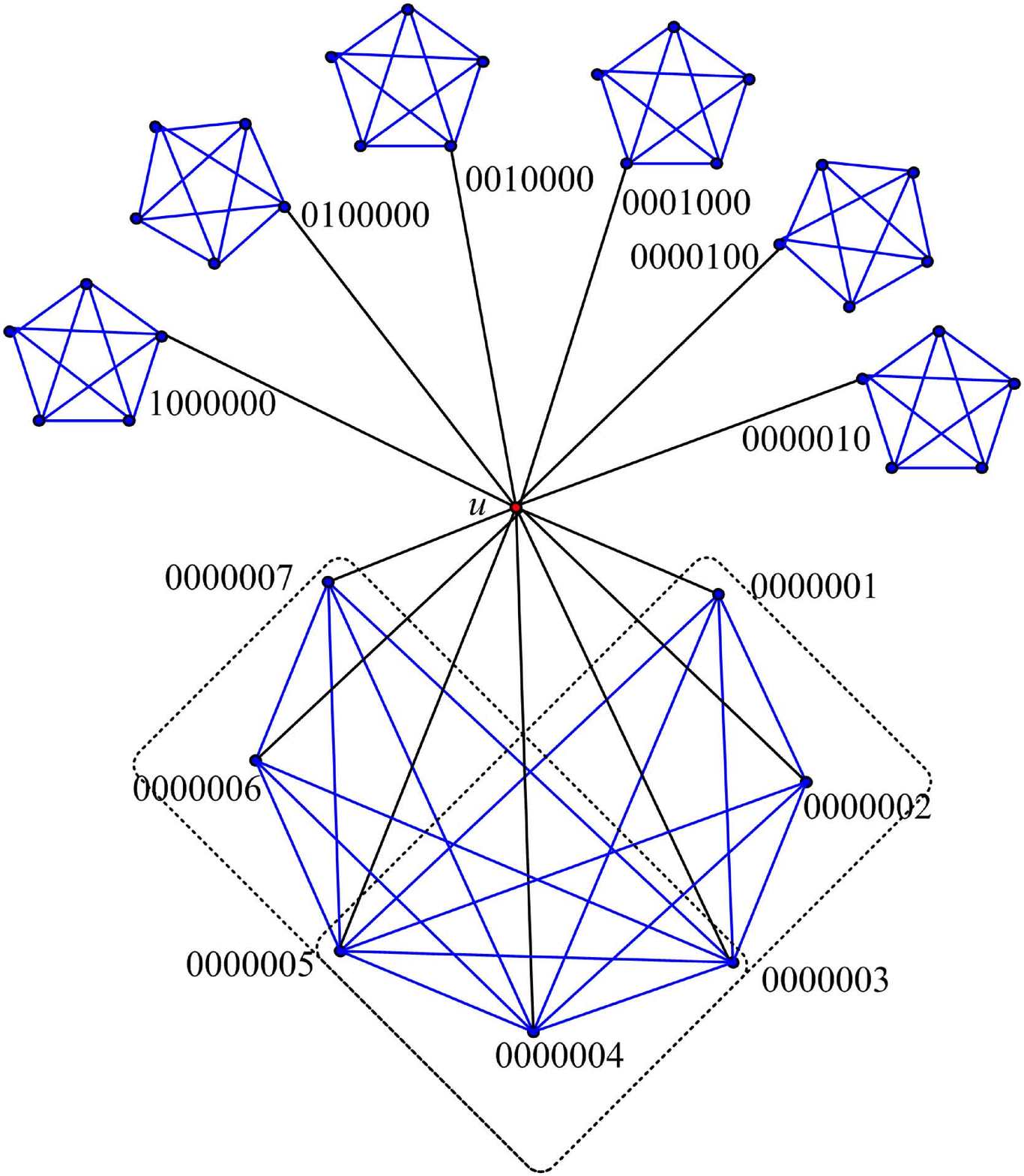}}\\
\caption{Illustration for Lemmas 3.4 and 3.7. }
\end{figure}
On the basis of 3.6 and 3.7, the following result holds.
\begin{The}$\kappa (D_{m,n}; K_s)=\lceil\frac{n-1}{s}\rceil+m$ for $3\leq s\leq n-1$.
\end{The}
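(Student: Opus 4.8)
The plan is to pin down $\kappa(D_{m,n}; K_s)$ exactly by sandwiching it between the two matching bounds that Lemmas 3.6 and 3.7 already furnish. First I would invoke Lemma 3.6 to record the lower bound $\kappa(D_{m,n}; K_s) \geq \lceil\frac{n-1}{s}\rceil + m$, and then Lemma 3.7 for the reverse inequality $\kappa(D_{m,n}; K_s) \leq \lceil\frac{n-1}{s}\rceil + m$, both valid throughout the range $3 \leq s \leq n-1$. Chaining the two gives
$$\lceil\tfrac{n-1}{s}\rceil + m \leq \kappa(D_{m,n}; K_s) \leq \lceil\tfrac{n-1}{s}\rceil + m,$$
which forces $\kappa(D_{m,n}; K_s) = \lceil\frac{n-1}{s}\rceil + m$. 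This is precisely the synthesis that produced Theorem 3.5 from Lemmas 3.3 and 3.4.

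At this final step there is no genuine obstacle; the combination is a one-line deduction, and all of the difficulty has already been absorbed into the two preceding lemmas. The hard part will have been the lower bound of Lemma 3.6, whose induction on $m$ must both rule out two sub-DCells $D^i_{m-1,n}$ and $D^j_{m-1,n}$ being broken simultaneously by so few copies of $K_s$, and verify, using the doubly-exponential growth estimate $t_{m,n} \geq (n+\frac{1}{2})^{2^m} - \frac{1}{2}$ of Lemma 3.2, that after deleting $\lceil\frac{n-1}{s}\rceil + m - 1$ copies of $K_s$ enough intact sub-DCells survive to keep the remainder connected. By contrast, the upper bound of Lemma 3.7 only needs the explicit family of $\lceil\frac{n-1}{s}\rceil + m$ complete graphs that covers the entire neighborhood of the fixed vertex $u = 00\ldots 00$, thereby isolating $u$ and exhibiting a $K_s$-structure cut of exactly the claimed size. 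Since the two lemmas meet, the theorem follows immediately.
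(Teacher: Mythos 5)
Your proposal is correct and matches the paper exactly: the paper derives Theorem 3.8 by the same one-line synthesis, citing Lemma 3.6 for the lower bound and Lemma 3.7 for the upper bound. Your accompanying description of where the real work lies (the induction and the growth estimate $t_{m,n}\geq (n+\frac{1}{2})^{2^m}-\frac{1}{2}$ in Lemma 3.6, the explicit neighborhood-covering cut isolating $u=00\ldots 00$ in Lemma 3.7) is also an accurate account of those lemmas' proofs.
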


\section{The structure connectivity of BCDC }
Since $n$-dimensional BCDC consists of two $n-1$-dimensional crossed cubes and an independent set, we introduce $n$-dimensional crossed cube first.
\begin{Def} \cite{20}
Two binary strings $x = x_1x_0$ and $y = y_1y_0$ are called pair related (denoted by $x\sim y$) if and only if $(x, y)\in \{(00, 00),(10, 10),(01, 11),(11, 01)\}$.
\end{Def}
\begin{Def} \cite{20}
We set $CQ_1$ as $K_2$ with vertices $0$ and $1$. $CQ_n$ consists of $CQ^0_{n-1}$ and
$CQ^1_{n-1}$. Two vertices $u = 0u_{n-2}\ldots u_0\in V (CQ^0_{n-1})$
and $v = 1v_{n-2}\ldots v_0 \in V(CQ^1_{n-1})$ are joined by an edge in $CQ_n$ if and only if
\begin{enumerate}[(i)]
\item \label{cond 1} $u_{n-2} = v_{n-2}$ if $n$ is even, and
\item \label{cond 2} $u_{2i+1}u_{2i} \sim v_{2i+1}v_{2i}$, for
$ 0\leq i <\lfloor \frac{n-1}{2} \rfloor$.
\end{enumerate}
\end{Def}
For any two vertices $u = u_{n-1}u_{n-2}\ldots u_0$ and $v = v_{n-1}v_{n-2}\ldots v_0$, $u$ is adjacent to $v$ if and only if there is $0\leq d\leq n-1$ satisfying the following conditions: (1) $u_{n-1}\ldots u_{d+1}=v_{n-1}\ldots v_{d+1}$,
(2) $u_d\neq v_d$, (3) $u_{d-1}=v_{d-1}$ if $d$ is odd, (4) $u_{2i+1}u_{2i} \sim v_{2i+1}v_{2i}$, for all
$ 0\leq i \leq\lfloor \frac{d}{2} \rfloor-1$. Then we set $v$  as $d$-dimensional neighbor of $u$, denoted by $u^d$.

BCDC's switches are vertices of $CQ_n$ and servers are edges of $CQ_n$. Each switch is an $n$-bit binary string $x = x_{n-1}x_{n-2}\ldots x_0$ and each server is a pair $[x,y]$. Then we get the original graph $A_n$ of the $n$-dimensional BCDC network, $B_n$.
Consider that switches are transparent, the graph structure of BCDC network as follows:

\begin{Def}\label{Bn} \cite{15}
We set $B_2$ as a $4$-cycle with vertices $[00, 01], [00, 10], [01, 11]$ and $[10, 11].$
For $n \geq 3$, $B^0_{n-1}$ (resp. $B^1_{n-1}$) is
obtained by prefixing each vertex $[x, y]$ of $B_{n-1}$ with $0$ (resp. $1$).
$B_n$ consists of $B^0_{n-1}$, $B^1_{n-1}$ and a vertex subset $S_n = \{[a, b]|a\in V (CQ^0_{n-1}), b\in V (CQ^1_{n-1})$ and $(a, b)\in E(CQn)\}$.
For three vertices $u = [a, b]\in V (B^0_{n-1}), v = [c, d]\in S_n$ and $w = [e, f]\in V (B^1_{n-1})$,
\begin{enumerate}[(i)]
\item \label{cond 1} $(u, v)\in E(B_n)$ if and only if $a = c$ or $b = c$;
\item \label{cond 2} $(v, w)\in  E(B_n)$ if and only if $e = d$ or $f = d$.
\end{enumerate}
\end{Def}
\begin{figure}[!htbp]
\centering
\subfigure[$A_{3}$.]{\includegraphics[totalheight=4cm, width=5cm]{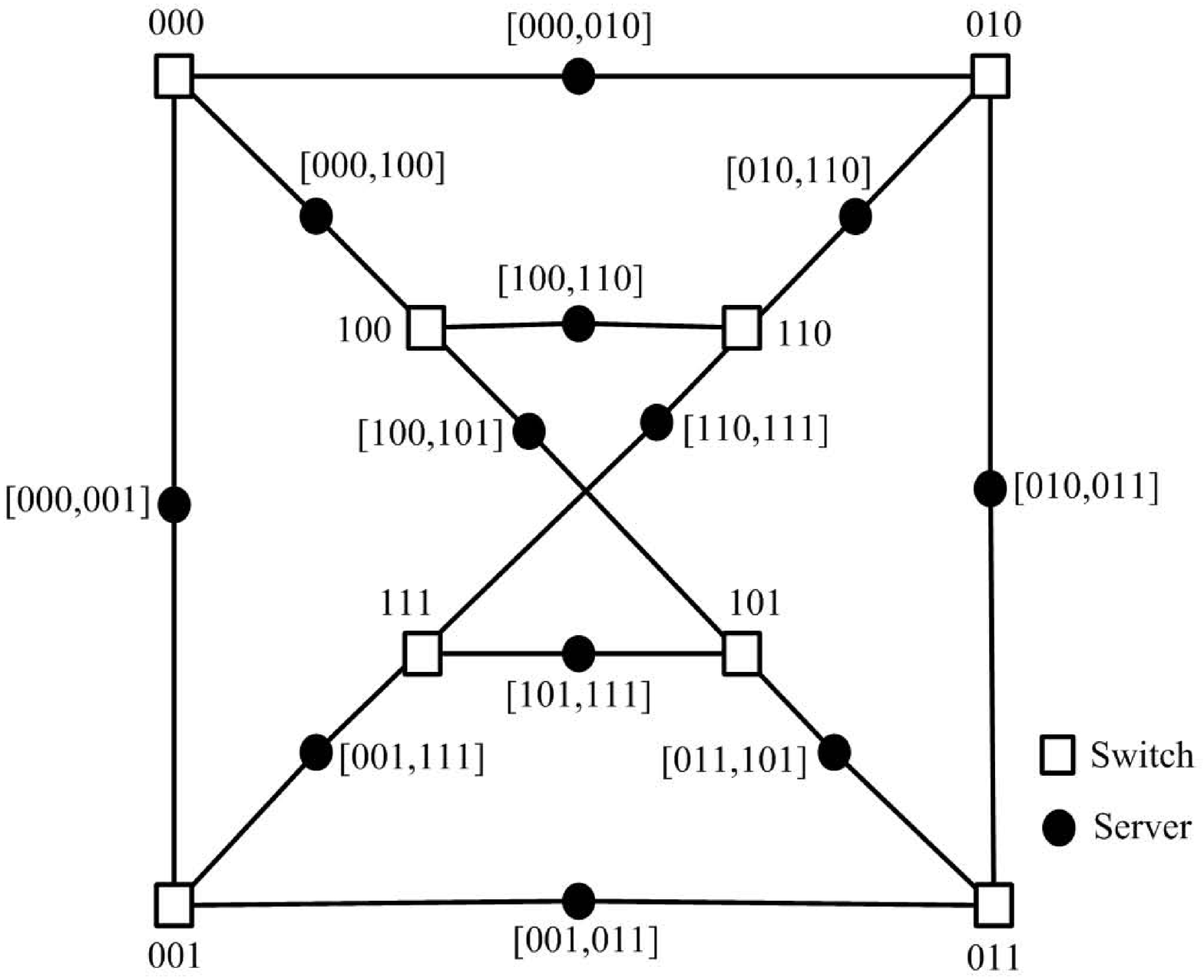}}
~~\subfigure[$CQ_{3}$.]{\includegraphics[totalheight=4cm, width=3.5cm]{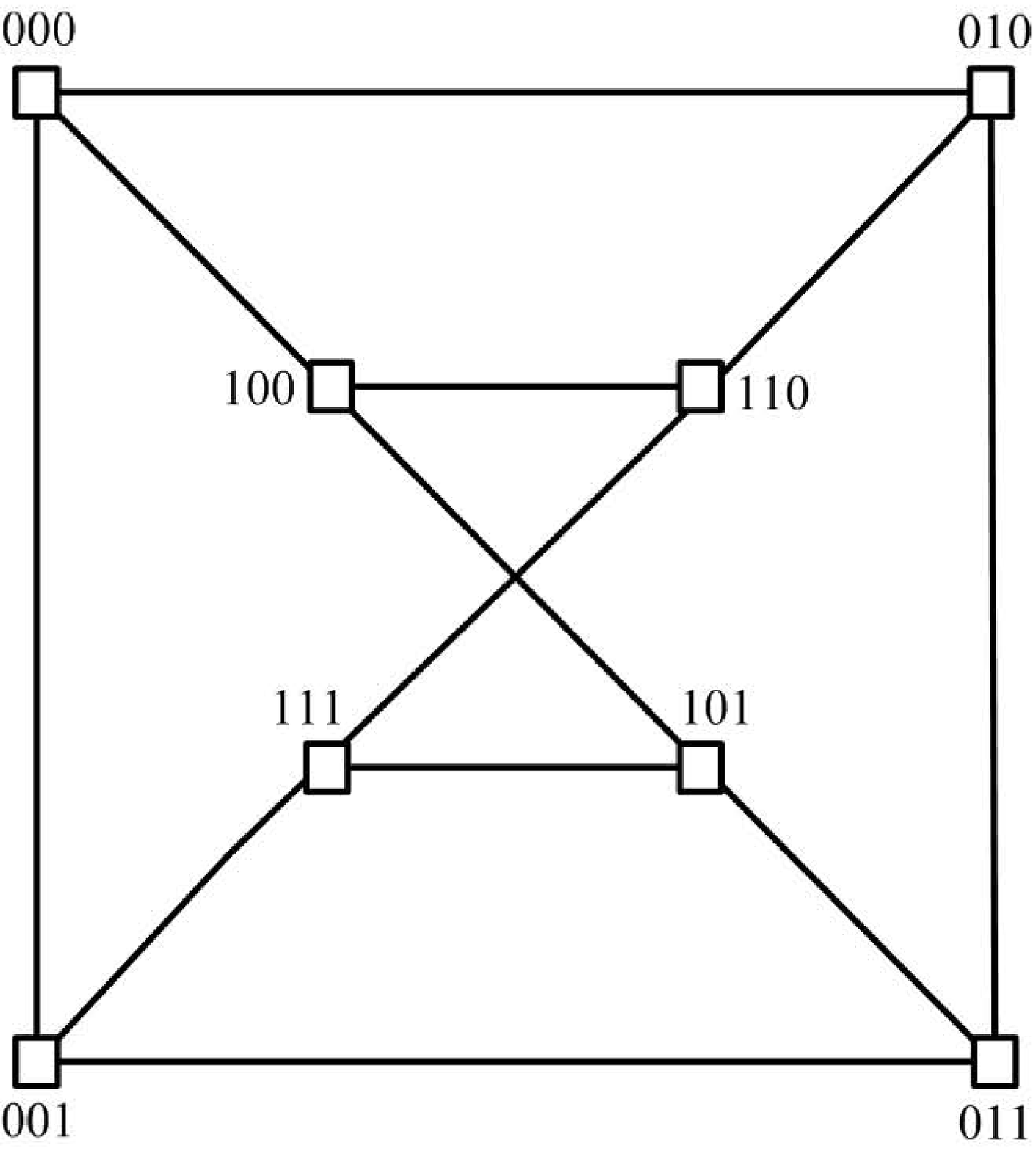}}
~~\subfigure[$B_{3}$.]{\includegraphics[totalheight=4cm, width=6.5cm]{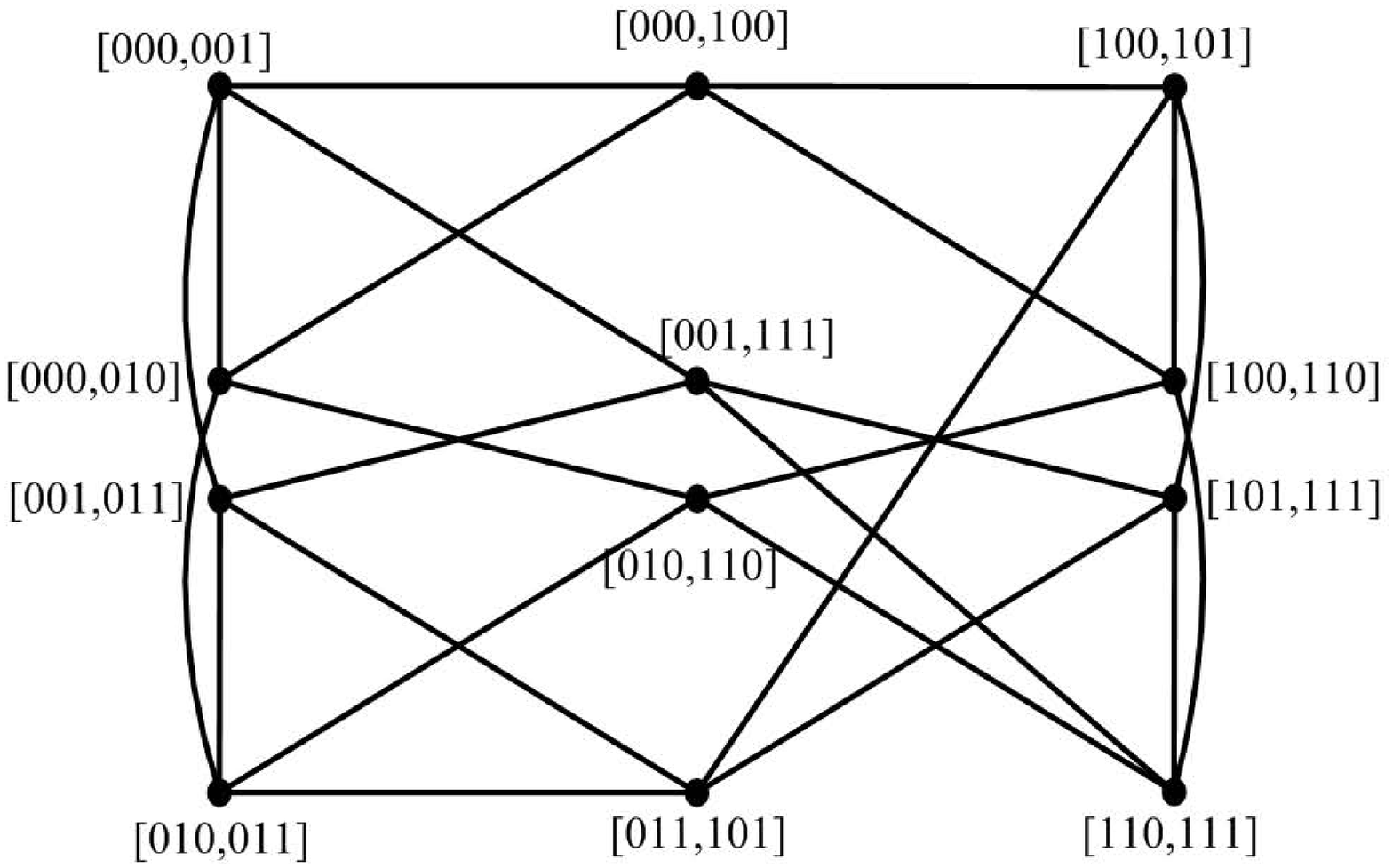}}\\
\caption{\label{} $A_{3}$, $CQ_{3}$ and $B_{3}$. }
\end{figure}

\begin{Lem} \emph{\cite{21}} $CQ_n$ is $n$-regular and triangle-free.
\end{Lem}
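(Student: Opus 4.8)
The plan is to establish both assertions by induction on $n$, using the recursive construction $CQ_n = CQ^0_{n-1}\cup CQ^1_{n-1}$ of Definition~3 together with the global adjacency characterization by $d$-dimensional neighbors $u^d$ stated just after it. The base case $CQ_1\cong K_2$ is $1$-regular and trivially triangle-free, and one can also check directly that $CQ_2$ is a $4$-cycle, which serves as a convenient second base case.

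For $n$-regularity I would fix a vertex $u=u_{n-1}\ldots u_0$ and show that for each $d\in\{0,1,\ldots,n-1\}$ there is exactly one vertex $v$ satisfying conditions (1)--(4) of the $u^d$ characterization. Conditions (1) and (2) pin down $v_{n-1}\ldots v_d$ (the high bits copy those of $u$, and $v_d$ is the flip of $u_d$), while the lower bits are forced because the pair-related relation of Definition~2 is a bijection (indeed an involution) on the four two-bit strings: each constraint $u_{2i+1}u_{2i}\sim v_{2i+1}v_{2i}$ has a unique solution. A short parity split confirms that every lower bit is determined --- when $d$ is even the pairs exhaust positions $0,\ldots,d-1$, and when $d$ is odd the single leftover bit $v_{d-1}$ is fixed by condition (3). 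Hence $u^d$ is unique; since $u^d$ first disagrees with $u$ at position $d$, the vertices $u^0,\ldots,u^{n-1}$ are pairwise distinct, giving degree exactly $n$. Equivalently, in the inductive formulation each vertex of $CQ^0_{n-1}$ retains its $n-1$ internal neighbors and gains exactly one cross-neighbor in $CQ^1_{n-1}$ (the dimension-$(n-1)$ neighbor), so the cross edges form a perfect matching between the two sub-cubes; this matching fact is the ingredient I reuse below.

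For triangle-freeness I would assume $CQ_{n-1}$ is triangle-free and consider any three mutually adjacent vertices of $CQ_n$, classified by how they fall into $CQ^0_{n-1}$ and $CQ^1_{n-1}$. If all three lie in one sub-cube they form a triangle in a copy of $CQ_{n-1}$, contradicting the hypothesis. Otherwise two vertices lie in one sub-cube and the third in the other; the two edges joining the lone vertex to the pair are both cross edges, but by the perfect-matching property each vertex has a unique cross-neighbor, so the lone vertex cannot be adjacent to two distinct vertices on the opposite side. Hence no triangle can occur, completing the induction.

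The regularity count is essentially bookkeeping, so the one place demanding genuine care is verifying that the cross edges form a perfect matching: this rests on the pair-related relation being a symmetric bijection on two-bit strings and on correctly tracking the parity-dependent conditions (i),(ii) of Definition~3 (equivalently condition (3) of the $u^d$ characterization). Once that is secured, the $2$-versus-$1$ case in the triangle analysis collapses immediately, so I expect the perfect-matching step, rather than the induction scaffolding, to be the main obstacle.
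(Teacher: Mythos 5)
Your argument is sound, but there is nothing in the paper to compare it against: Lemma 4.1 is stated with a citation to \cite{21} and is never proved in the paper, so you have supplied a self-contained proof where the authors simply rely on the literature. Your proof itself checks out. The crux, as you correctly isolate, is that the pair-related relation of Definition 2 assigns to each two-bit string a unique partner (it is an involution: $00\leftrightarrow 00$, $10\leftrightarrow 10$, $01\leftrightarrow 11$), so the constraints of Definition 3 determine, for each $u\in V(CQ^0_{n-1})$, exactly one cross-neighbor $v\in V(CQ^1_{n-1})$ and conversely: when $n$ is even the pair constraints fix bits $0,\ldots,n-3$ of $v$ and condition (i) fixes bit $n-2$, while when $n$ is odd the pair constraints already fix bits $0,\ldots,n-2$. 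Hence the cross edges form a perfect matching, and both halves of the lemma follow at once: regularity, because each vertex has $n-1$ neighbors inside its copy of $CQ_{n-1}$ plus exactly one cross-neighbor; and triangle-freeness, because a triangle meeting both copies would force some vertex to have two cross-neighbors, while a triangle inside one copy contradicts the inductive hypothesis. The only redundancy is that your separate uniqueness count via the $u^d$ characterization repeats the same parity bookkeeping in coordinate form --- the cross edges are exactly the $(n-1)$-dimensional edges --- so either the recursive description or the coordinate description alone would carry the whole proof.
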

\begin{Lem} \emph{\cite{17}} For $B_n$, we have the statements below:
\begin{enumerate}[(i)]
\item \label{cond 1} $B_n$ is $(2n-2)$-regular and  has $n2^{n-1}$ vertices, $n(n-1)2^{n-1}$ edges.
\item \label{cond 2} $\kappa (B_n)=2n-2$.
\item \label{cond 3} $B_n$ is the line graph of $CQ_n$.
\end{enumerate}
\end{Lem}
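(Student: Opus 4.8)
The plan is to establish (iii) first and then read off (i) and (ii) from the theory of line graphs together with the elementary parameters of $CQ_n$. Recall from Definition~4 that the servers of $B_n$ are exactly the pairs $[x,y]$ with $\{x,y\}\in E(CQ_n)$ (the switches being the vertices of $CQ_n$), and that two servers are adjacent precisely when they meet in a common switch. This is verbatim the definition of a line graph, so the natural map $\phi\colon[x,y]\mapsto xy$ should be an isomorphism $B_n\cong L(CQ_n)$. To turn this observation into a proof I would induct on $n$: check $B_2\cong L(CQ_2)$ by hand, and in the inductive step verify that the three pieces $B^0_{n-1}$, $B^1_{n-1}$, $S_n$ of Definition~4 correspond under $\phi$ to the edges lying inside $CQ^0_{n-1}$, inside $CQ^1_{n-1}$, and across the two halves. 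The cross-edges are exactly $\{(a,b):a\in V(CQ^0_{n-1}),\,b\in V(CQ^1_{n-1}),\,(a,b)\in E(CQ_n)\}=S_n$, and the incidence rules (i) $a=c$ or $b=c$ and (ii) $e=d$ or $f=d$ are precisely the conditions that two such edges share an endpoint, so $\phi$ is an adjacency-preserving bijection.

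Granting (iii), statement (i) is a routine line-graph computation. By Lemma~4.1 $CQ_n$ is $n$-regular, and from Definition~3 it has $2^n$ vertices, hence $|E(CQ_n)|=\tfrac12 n2^{n}=n2^{n-1}$; since $V(L(CQ_n))=E(CQ_n)$ this gives $|V(B_n)|=n2^{n-1}$. In any line graph the vertex corresponding to an edge $uv$ has degree $\deg(u)+\deg(v)-2$, which here is $n+n-2=2n-2$, so $B_n$ is $(2n-2)$-regular. Finally $|E(B_n)|=\sum_{v\in V(CQ_n)}\binom{\deg v}{2}=2^{n}\binom{n}{2}=n(n-1)2^{n-1}$ (equivalently $\tfrac12\cdot n2^{n-1}\cdot(2n-2)$), completing (i).

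For (ii) the upper bound is immediate: every connected graph satisfies $\kappa\le\delta$, and $\delta(B_n)=2n-2$ by (i), so $\kappa(B_n)\le 2n-2$. The lower bound is where the real work lies. The key structural fact is that deleting a set $F$ of vertices of $B_n$ — equivalently a set $F$ of edges of $CQ_n$ — yields $B_n-F=L(CQ_n\setminus F)$, since line-graph adjacency depends only on the surviving edge set. Consequently $B_n-F$ is disconnected and nontrivial if and only if the edge-deleted graph $CQ_n\setminus F$ splits into at least two components each containing an edge. Thus $\kappa(B_n)$ equals the least number of edges whose removal leaves two edge-bearing sides, which for $CQ_n$ coincides with its restricted edge-connectivity $\lambda'(CQ_n)$. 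It therefore suffices to prove $\lambda'(CQ_n)=2n-2$: the bound $\lambda'(CQ_n)\le 2n-2$ comes from isolating a single edge $uv$, which requires deleting the $(n-1)+(n-1)$ other edges at $u$ and $v$, and the matching lower bound amounts to showing that for any bipartition $V(CQ_n)=A\sqcup B$ with both $CQ_n[A]$ and $CQ_n[B]$ containing an edge, the cut $E(A,B)$ has at least $2n-2$ edges.

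The hard part will be this last inequality $\lambda'(CQ_n)\ge 2n-2$. The tools I would bring to bear are the $n$-regularity and triangle-freeness of $CQ_n$ (Lemma~4.1) together with its known high connectivity ($CQ_n$ is $n$-connected and $n$-edge-connected). When one side is a single edge $\{u,v\}$, triangle-freeness forces $u$ and $v$ to have no common neighbour, so they send $2(n-1)=2n-2$ distinct edges across the cut, giving equality. For a general partition with $|A|,|B|\ge2$ one argues that any smaller cut would contradict either $\lambda(CQ_n)=n$ or a short edge-isoperimetric estimate, so the minimum restricted edge cut is always attained by isolating an edge. Combining the two bounds yields $\kappa(B_n)=\lambda'(CQ_n)=2n-2$, which with (i) and (iii) completes the lemma. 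Alternatively one could prove (ii) directly inside $B_n$ by an induction mirroring the three-part decomposition of Definition~4, but routing the argument through $L(CQ_n)$ is considerably cleaner.
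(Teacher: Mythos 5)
The paper itself offers no proof of this lemma: it is quoted from reference [17], so there is no in-paper argument to compare you against, and your proposal must stand on its own. Parts (iii) and (i) of your plan are fine: the identification $B_n\cong L(CQ_n)$ is essentially Definition 4 read aloud, and the counts $|V(B_n)|=n2^{n-1}$, $|E(B_n)|=n(n-1)2^{n-1}$ and the $(2n-2)$-regularity follow from the $n$-regularity of $CQ_n$ by the standard line-graph formulas. Your reduction of (ii) to edge cuts of $CQ_n$ is also sound in outline, with one caveat you should make explicit: a vertex cut $F$ of $L(CQ_n)$ corresponds to an edge set whose removal leaves at least two edge-bearing components \emph{but possibly also isolated vertices}, so it need not be a restricted edge cut. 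The isolated-vertex case has to be dispatched separately; for instance, if some $v$ becomes isolated then $F$ contains the $n$ edges at $v$ together with an edge cut of $CQ_n-v$, whence $|F|\ge n+\kappa(CQ_n-v)\ge 2n-1$, which is harmless but not automatic.

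The genuine gap is the inequality $\lambda'(CQ_n)\ge 2n-2$, which is the entire content of (ii) and which you never prove. You write that for a partition with $|A|,|B|\ge 2$ ``one argues that any smaller cut would contradict either $\lambda(CQ_n)=n$ or a short edge-isoperimetric estimate.'' But $\lambda(CQ_n)=n$ only yields $|E(A,B)|\ge n$, which falls short of $2n-2$ by nearly a factor of two, and no general principle upgrades edge-connectivity to restricted edge-connectivity: there exist graphs with $\lambda'$ strictly below $2\lambda-2$ (join two copies of $K_5$ by a matching of three edges: $\lambda=\lambda'=3<4$), so $\lambda'$-optimality is a structural property that must be proved for $CQ_n$ specifically. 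The unsupplied ``isoperimetric estimate'' is exactly the hard theorem; in the literature it is obtained by induction on $n$ using the decomposition of $CQ_n$ into two copies of $CQ_{n-1}$ joined by a perfect matching, controlling how a candidate cut meets the two halves and the matching. Until that argument (or an explicit citation for the restricted edge-connectivity of crossed cubes) is supplied, your treatment of (ii) reduces one unproved statement to another --- which is presumably why the paper cites [17] for this lemma instead of proving it.
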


\begin{Lem} \emph{\cite{17}} For $n\geq 3$ and $0\leq g\leq n-3$,
\begin{equation*}
\kappa_g(B_n)=\left\{
\begin{aligned}
4,~~if~n=3;\\
2n+g(n-2)-2,~~if~n\geq 4.\\
\end{aligned}
\right.
\end{equation*}
\end{Lem}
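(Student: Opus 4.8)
The plan is to prove the identity by a matching pair of bounds, disposing of small dimensions directly and using induction on $n$ for the rest. For $n=3$ the hypothesis $0\le g\le n-3$ forces $g=0$, so the claim reads $\kappa_0(B_3)=\kappa(B_3)=4$, which is exactly Lemma 4.2$(ii)$; I would likewise check $B_4$ (where only $g\in\{0,1\}$ occur) by hand to anchor the induction, and then assume the formula for $B_{n-1}$ while establishing it for $B_n$ with $n\ge 5$.

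For the upper bound I would work in the line-graph model furnished by Lemma 4.2$(iii)$, viewing each vertex of $B_n$ as an edge of $CQ_n$, with two of them adjacent precisely when the corresponding edges of $CQ_n$ share an endpoint. Fix a vertex $w$ of $CQ_n$ and let $F$ consist of $g+1$ of the edges of $CQ_n$ incident with $w$; this is legitimate because $CQ_n$ is $n$-regular (Lemma 4.1) and $g+1\le n-2$. In $B_n$ the set $F$ spans a clique $K_{g+1}$, and if $V_F$ denotes the $g+2$ endpoints of these edges then, since $CQ_n$ is triangle-free (Lemma 4.1), the only edges of $CQ_n$ inside $V_F$ are the chosen $g+1$. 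A direct count then gives $|N_{B_n}(F)|=n(g+2)-2(g+1)=2n+g(n-2)-2$. Deleting $N_{B_n}(F)$ leaves $F$ as a component on $g+1$ vertices, and since $|V(B_n)|=n2^{n-1}$ by Lemma 4.2$(i)$ the remainder is large and qualifying, so $N_{B_n}(F)$ is a $g$-extra cut and $\kappa_g(B_n)\le 2n+g(n-2)-2$.

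For the lower bound I would mimic the copy-decomposition arguments of Lemmas 3.3 and 3.6. Let $F$ be any $g$-extra cut of $B_n$; following Definition 4 write $B_n=B^0_{n-1}\cup B^1_{n-1}\cup S_n$ and put $F_0=F\cap B^0_{n-1}$, $F_1=F\cap B^1_{n-1}$ and $F_S=F\cap S_n$. The analysis splits according to whether $B^0_{n-1}-F_0$ and $B^1_{n-1}-F_1$ remain connected. If both do, the links described in Definition 4$(i)$--$(ii)$, routed through the surviving vertices of $S_n$, re-connect the two copies, so $F$ can separate $B_n$ only by absorbing the whole neighborhood $N_{B_n}(C)$ of some admissible component $C$, for which the isoperimetric estimate developed below gives $|N_{B_n}(C)|\ge 2n+g(n-2)-2$. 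If exactly one copy, say $B^0_{n-1}$, is disconnected, I would apply the induction hypothesis $\kappa_g(B_{n-1})=2(n-1)+g(n-3)-2$ to $F_0$ and then add the vertices of $F_S$ together with the cross-links that must be removed to detach the fragments from $B^1_{n-1}$; the inequality $2(n-1)+g(n-3)-2+(\text{crossing contribution})\ge 2n+g(n-2)-2$ is what pins down the constant.

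I expect the lower bound to be the main obstacle, and within it the crux is showing that an optimal cut may be taken to isolate a clique $K_{g+1}$ of the above form. Concretely, writing $C$ for the smallest surviving component and $V_C\subseteq V(CQ_n)$ for the endpoints it uses, the line-graph model yields $|N_{B_n}(C)|=n|V_C|-e(CQ_n[V_C])-|C|$, and I would aim to prove that, subject to the connectivity and size requirements imposed on a $g$-extra cut, this quantity can never fall below $2n+g(n-2)-2$. Controlling $e(CQ_n[V_C])$ through the triangle-freeness of $CQ_n$ (Lemma 4.1), while simultaneously keeping every component of $B_n-F$ above the prescribed size as $F$ is redistributed over $B^0_{n-1}$, $B^1_{n-1}$ and $S_n$, is the delicate bookkeeping on which the argument hinges; carrying it through, together with the base cases, would give $\kappa_g(B_n)=2n+g(n-2)-2$ for $n\ge 4$ and the value $4$ for $n=3$.
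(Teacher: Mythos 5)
First, a point of order: the paper does not prove this statement at all --- Lemma 4.3 is quoted from reference \cite{17} and used in Section 4 as a black box. So there is no proof of the paper's own to compare yours against; what follows is an assessment of your argument on its own terms.

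Your upper bound is essentially sound: taking $g+1$ edges of $CQ_n$ incident with a common vertex $w$, triangle-freeness (Lemma 4.1) guarantees these form a clique $K_{g+1}$ in the line graph $B_n$, and the count $n(g+2)-2(g+1)=2n+g(n-2)-2$ for its neighborhood is correct. Even here, though, you owe one more step: for $N_{B_n}(F)$ to be a $g$-extra cut, \emph{every} component of $B_n-N_{B_n}(F)$, not just $F$ itself, must have at least $g+1$ vertices, and ``the remainder is large and qualifying'' is an assertion, not an argument (one normally proves the remainder is connected). The genuine gap is the lower bound, which you never actually prove: the two inequalities carrying all the content --- the isoperimetric claim that any admissible component $C$ of $B_n-F$ forces $|N_{B_n}(C)|\ge 2n+g(n-2)-2$, and the induction-step inequality $2(n-1)+g(n-3)-2+(\mbox{crossing contribution})\ge 2n+g(n-2)-2$ --- are both stated as goals (``I would aim to prove'', ``delicate bookkeeping \ldots carrying it through'') rather than established, and the second is not even given a precise meaning for ``crossing contribution''. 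Moreover, in your case where both $B^0_{n-1}-F_0$ and $B^1_{n-1}-F_1$ remain connected, the claim that surviving vertices of $S_n$ reconnect the two copies is not automatic: $S_n$ is an independent set, so a surviving $S_n$-vertex with all neighbors deleted is a singleton component (already forbidden when $g\ge 1$), and separating the two copies requires accounting for all $2^{n-1}$ vertices of $S_n$; quantifying exactly this is the missing heart of the argument. As it stands, the proposal establishes only $\kappa_g(B_n)\le 2n+g(n-2)-2$ (modulo the component-size check), not the stated equality.
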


\begin{Lem}\label{TC} $B_n[N_{B_n}(u)]$ is two independent complete graphs $K_{n-1}$.
\end{Lem}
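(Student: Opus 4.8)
The plan is to exploit the fact, recorded in Lemma 4.2$(iii)$, that $B_n$ is the line graph of the crossed cube $CQ_n$. Under this identification a vertex $u$ of $B_n$ is an edge of $CQ_n$, say $u = xy$ with $x,y \in V(CQ_n)$, and two vertices of $B_n$ are adjacent precisely when the corresponding edges of $CQ_n$ share a common endpoint. First I would describe $N_{B_n}(u)$ concretely: a neighbor of $u$ is an edge of $CQ_n$ distinct from $xy$ that meets $\{x,y\}$, so it is incident either to $x$ or to $y$. This splits $N_{B_n}(u)$ into the two sets
\[
A = \{xz \in E(CQ_n) \mid z \neq y\}, \qquad B = \{yw \in E(CQ_n) \mid w \neq x\}.
\]

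Next I would count and clique-ify. Since $CQ_n$ is $n$-regular by Lemma 4.1, $x$ has exactly $n$ incident edges, one of which is $xy$, so $|A| = n-1$; similarly $|B| = n-1$. Any two edges in $A$ share the common endpoint $x$, hence are adjacent in $B_n$, so $B_n[A] \cong K_{n-1}$; the same argument with the endpoint $y$ gives $B_n[B] \cong K_{n-1}$. Because $CQ_n$ is a simple graph, the only edge incident to both $x$ and $y$ is $xy$ itself, so $A \cap B = \emptyset$ and $|N_{B_n}(u)| = 2n-2$, consistent with the $(2n-2)$-regularity of $B_n$ in Lemma 4.2$(i)$.

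Finally I would argue that no edge of $B_n$ joins $A$ to $B$, which is where the triangle-freeness of $CQ_n$ (Lemma 4.1) does the essential work. Take $xz \in A$ and $yw \in B$; they are adjacent in $B_n$ if and only if they share an endpoint in $CQ_n$, i.e. $\{x,z\} \cap \{y,w\} \neq \emptyset$. Since $z \neq y$, $w \neq x$ and $x \neq y$, the only possibility is $z = w$; but then $x,y,z$ would span a triangle with edges $xy$, $xz$, $yz$ in $CQ_n$, contradicting triangle-freeness. Hence $A$ and $B$ are joined by no edges, so $B_n[N_{B_n}(u)]$ is the disjoint union of two independent copies of $K_{n-1}$, as claimed. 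The only delicate point is this last step; everything else is a direct translation of the line-graph picture together with regularity.
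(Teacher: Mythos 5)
Your proof is correct and follows essentially the same route as the paper's: both identify $u$ with an edge $vw$ of $CQ_n$ (via the line-graph structure), split $N_{B_n}(u)$ into the $n-1$ edges at each endpoint to get two copies of $K_{n-1}$ by $n$-regularity, and rule out any edge between the two cliques because such an edge would force a triangle in $CQ_n$, contradicting its triangle-freeness. Nothing further is needed.
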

\begin{proof} Given a vertex $u=[v,w]$ in $B_n$, where $vw$ is an edge of $CQ_n$. Since $CQ_n$ is $n$-regular, we set $N_{CQ_n}(v)=\{w\}\cup \{v^i|0\leq i\leq n-2\}$ and $N_{CQ_n}(w)=\{v\}\cup \{w^i|0\leq i\leq n-2\}$. Then $N_{B_n}(u)=\{[ v, v^i], [w, w^i]|0\leq i\leq n-2\}$.  According to the Definition \ref{Bn},
$B_n[\{[v, v^i]|0\leq i\leq n-2\}]\cong K_{n-1}$ and $B_n[\{[w, w^i]|0\leq i\leq n-2\}]\cong K_{n-1}$. Suppose $[v, v^i]$ is adjacent to $[w, w^j]$ in $B_n$. Then $v^i=w^j$, which implies that $v, w$ and $v^i$ (or $w^j$) induce a triangle in $CQ_n$, a contradiction by Lemma 4.1. Thus $B_n[N_{B_n}(u)]$ is two independent complete graphs $K_{n-1}.$
\end{proof}
\begin{figure}[!htbp]
\centering
\includegraphics[height=6cm, width=15.5cm]{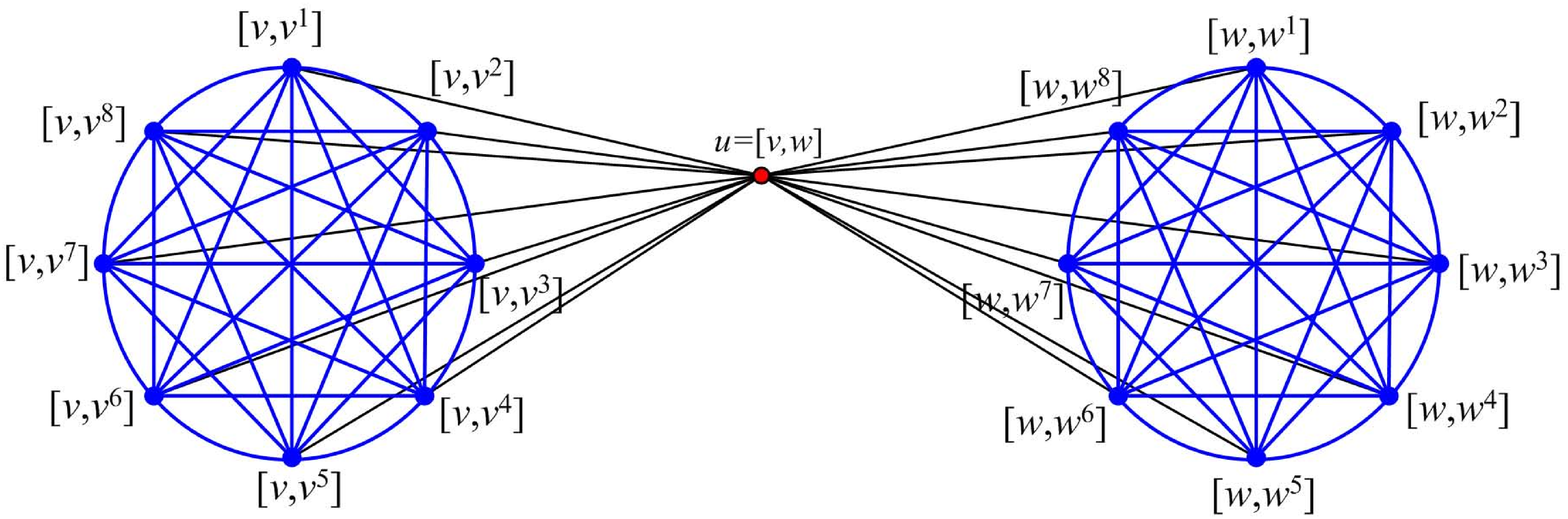}
\caption{\label{} Illustration for $B_9[N_{B_9}(u)]$. }
\end{figure}
\begin{The}
\begin{equation*}
\kappa (B_n; K_{1,1})=\kappa^s (B_n; K_{1,1})=\left\{
\begin{aligned}
n-1,~~for~odd~n\geq 5;\\
n,~~for~even~n\geq 4.\\
\end{aligned}
\right.
\end{equation*}
\end{The}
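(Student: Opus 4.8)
The plan is to show that the stated value is simultaneously an upper bound for the structure connectivity and a lower bound for the substructure connectivity, which pins down both quantities since $\kappa^s(B_n;K_{1,1})\le \kappa(B_n;K_{1,1})$ always holds. Observe first that $K_{1,1}\cong K_2$, so the members of a $K_{1,1}$-structure cut are single edges, whereas a $K_{1,1}$-substructure cut may also contain single vertices (the connected subgraphs of $K_2$ being $K_1$ and $K_2$). I will therefore prove the lower bound for $\kappa^s$ and the upper bound for $\kappa$, and the two meet.

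For the lower bound, I would start from the fact that for any substructure cut $\mathcal F$ the set $V(\mathcal F)$ is an ordinary vertex cut, so $|V(\mathcal F)|\ge \kappa(B_n)=2n-2$ by Lemma 4.2. Since each member of $\mathcal F$ contributes at most two vertices, $2|\mathcal F|\ge |V(\mathcal F)|\ge 2n-2$, hence $|\mathcal F|\ge n-1$; this already settles the odd case. For even $n$ I would argue by contradiction. If $|\mathcal F|=n-1$, then both inequalities are forced to be equalities, so every member is an edge, these $n-1$ edges are pairwise vertex-disjoint, and $|V(\mathcal F)|=2n-2$ exactly. Now $|V(\mathcal F)|=2n-2=\kappa(B_n)$ is strictly smaller than $\kappa_1(B_n)=3n-4$ (Lemma 4.3, valid for $n\ge 4$), so $B_n-V(\mathcal F)$ cannot have every component of size at least two; as it is disconnected and far from trivial, some vertex $u$ must be isolated. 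Thus $N_{B_n}(u)\subseteq V(\mathcal F)$, and equality of cardinalities gives $V(\mathcal F)=N_{B_n}(u)$. By Lemma \ref{TC}, $B_n[N_{B_n}(u)]$ is a disjoint union of two copies of $K_{n-1}$ with no edges between them, so the $n-1$ disjoint edges of $\mathcal F$ restrict to a perfect matching of each $K_{n-1}$. A perfect matching of $K_{n-1}$ exists only when $n-1$ is even, contradicting $n$ even; hence $|\mathcal F|\ge n$.

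For the upper bound I would exhibit an explicit structure cut isolating a vertex $u$. Writing $N_{B_n}(u)=C_1\sqcup C_2$ with $C_1,C_2\cong K_{n-1}$ as in Lemma \ref{TC}, I would cover $C_1$ and $C_2$ by edges. When $n$ is odd, each $C_i$ admits a perfect matching into $(n-1)/2$ edges, so $n-1$ edges cover $N_{B_n}(u)$ and isolate $u$, giving $\kappa(B_n;K_{1,1})\le n-1$. When $n$ is even, $n-1$ is odd, so in each $C_i$ a maximum matching of $(n-2)/2$ edges leaves exactly one uncovered vertex, which I cover by one further edge to a neighbour outside $N_{B_n}(u)$; this uses $n/2$ edges per clique, hence $n$ edges in total, and again isolates $u$, giving $\kappa(B_n;K_{1,1})\le n$. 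In both cases $B_n-\mathcal F$ consists of the singleton $u$ together with a nonempty remainder, so $\mathcal F$ is a genuine structure cut.

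The main obstacle is the even case of the lower bound. The crude vertex-counting bound yields only $n-1$, and closing the gap to $n$ rests on two finer structural facts acting together: the extra-connectivity strict inequality $\kappa_1(B_n)>\kappa(B_n)$ from Lemma 4.3, which forces any minimum vertex cut to be the neighbourhood of a single vertex, and the parity obstruction from Lemma \ref{TC}, namely that $N_{B_n}(u)$ decomposes into two odd cliques with no crossing edges and so cannot be perfectly matched when $n-1$ is odd. I would take care to state the forcing step $V(\mathcal F)=N_{B_n}(u)$ and the nonexistence of the perfect matching precisely, since this is exactly where the even/odd dichotomy in the theorem originates.
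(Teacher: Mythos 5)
Your proof is correct and follows essentially the same route as the paper: the odd case by comparing $|V(\mathcal{F})|$ with $\kappa(B_n)$, the even case by using $\kappa_1(B_n)$ (Lemma 4.3) to force an isolated vertex $u$ and then the two-disjoint-$K_{n-1}$ structure of $N_{B_n}(u)$ (Lemma 4.4) to rule out a cover by $n-1$ pairwise disjoint edges, and the upper bound by the same matching-based cut isolating a vertex, with the leftover vertex of each odd clique matched to an outside neighbour. The only difference is presentational: you spell out the perfect-matching parity obstruction that the paper compresses into the single assertion that $n-1$ subgraphs of $K_{1,1}$ cannot cover all of $N_{B_n}(u)$.
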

\begin{proof} Assume $\kappa^s (B_n; K_{1,1})\leq n-2$ ($\leq n-1$) for odd (even) $n$. Then $B_n$ has a $K_{1,1}$-substructure cut $\mathcal{F}$. Let $C$ be a smallest component of $B_n-\mathcal{F}$. For odd $n\geq 5$, $|V(\mathcal{F})|\leq 2(n-2)<\kappa (B_n)$ by Lemma 4.2, a contradiction. Thus $\kappa^s (B_n; K_{1,1})\geq n-1$ for odd $n\geq 5$.
For even $n\geq 4$, $|V(\mathcal{F})|\leq 2(n-1)<\kappa_1 (B_n)$ by Lemma 4.3. Then $|V(C)|=1$, say $u$. We have  $N_{B_n}(u)\subseteq V(\mathcal{F})$. By Lemma \ref{TC}, $n-1$ subgraphs of $K_{1,1}$ can not cover all vertices of $N_{B_n}(u)$, a contradiction. Thus $\kappa^s (B_n; K_{1,1})\geq n$ for even $n\geq 4$.

We give a $K_{1,1}$-structure cut of $B_n$ for $n\geq 4$. Let $u=[v, w]$ be a vertex of $B_n$ with $v=00\ldots 00$ and $w=10\ldots 00$. Then $N_{B_n}(u)=\{[v,v^i], [w, w^i]|0\leq i\leq n-2\}$. For $0\leq j\leq \lfloor\frac{n-2}{2}\rfloor-1$, we set
$S_j=\{[v,v^{2j}]; [v, v^{2j+1}]\}$ and $S'_j=\{[w,w^{2j}]; [w, w^{2j+1}]\}$. For odd $n$, we set
$$
\aligned
S_{\lfloor\frac{n-2}{2}\rfloor}=\{[v,v^{n-3}]; [v, v^{n-2}]\},~~S'_{\lfloor\frac{n-2}{2}\rfloor}=\{[w,w^{n-3}]; [w, w^{n-2}]\}.
\endaligned
$$
For even $n$, we set
$$
\aligned
S_{\lfloor\frac{n-2}{2}\rfloor}=\{[v,v^{n-2}]; [v^{n-2}, v^{n-2, n-1}]\},~~S'_{\lfloor\frac{n-2}{2}\rfloor}=\{[w,w^{n-2}]; [w^{n-2}, w^{n-2, n-1}]\}.
\endaligned
$$
Let $\mathcal{F}=\{S_j, S'_j|0\leq j\leq \lfloor\frac{n-2}{2}\rfloor\}$. We find $N_{B_n}(u)\subseteq V(\mathcal{F})$ and $[11\ldots 11, 11\ldots 10]\notin V(B_n-\mathcal{F})$, which implies that $B_n-\mathcal{F}$ has two components. Then $u$ is a singleton and $[11\ldots 11, 11\ldots 10]$ belongs to the other component. Thus $\kappa (B_n; K_{1,1})\leq n-1$ for odd $n\geq 5$ and $\kappa (B_n; K_{1,1})\leq n$ for even $n\geq 4$.
\end{proof}

\begin{Lem}\label{SL} For $n\geq 4$ and $2\leq t\leq 2n-3$, let $r$ be remainder of $n-1$ divided by $1+t$. Then
\begin{equation*}
\kappa^s(B_n; K_{1,t})\geq
\left\{
\begin{aligned}
&\frac{2n-4}{1+t}+1,~~if~ 2\leq t\leq n-3~and~r=1,\\
&2\lceil\frac{n-1}{1+t}\rceil,~~otherwise.\\%for~ n-2\leq t\leq 2n-3,~or~2\leq t\leq n-3~and~r\neq 1.\\
\end{aligned}
\right.
\end{equation*}\end{Lem}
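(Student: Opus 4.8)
The plan is to argue by contradiction. Suppose $\mathcal{F}$ is a $K_{1,t}$-substructure cut of $B_n$ whose size is strictly smaller than the claimed bound, and aim to show that $B_n-\mathcal{F}$ is in fact connected (the trivial-remainder alternative in the definition of a subgraph cut is ruled out by size). Since every member of $\mathcal{F}$ is a star with at most $t$ leaves, it has at most $1+t$ vertices, so $|V(\mathcal{F})|\le(1+t)|\mathcal{F}|$. A short calculation, distinguishing $t\le n-3$ (where $|V(\mathcal{F})|\le 2n+t-3$) from $n-2\le t\le 2n-3$ (where the bound is small and $|V(\mathcal{F})|\le 1+t\le 2n-2$), shows that in every case $|V(\mathcal{F})|<3n-4=\kappa_1(B_n)$ by Lemma 4.3 with $g=1$. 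First I would use this to reduce to a purely local problem: as $B_n-\mathcal{F}$ is disconnected while $|V(\mathcal{F})|<\kappa_1(B_n)$, the $1$-extra connectivity of $B_n$ forces a component to be a single vertex $u$, whence $N_{B_n}(u)\subseteq V(\mathcal{F})$, and it suffices to show that fewer members than the bound cannot cover $N_{B_n}(u)$.

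The heart of the proof is a coverage estimate for $N_{B_n}(u)$. By Lemma \ref{TC} this neighbourhood splits into two independent cliques $A\cong K_{n-1}$ and $B\cong K_{n-1}$, corresponding to the edges of $CQ_n$ at the two endpoints $v,w$ of $u$. I would classify each member $T\in\mathcal{F}$ by the position of its centre. If the centre lies in $A$, then every leaf of $T$ that falls in $N_{B_n}(u)$ is adjacent to it and hence also lies in $A$, since there are no $A$--$B$ edges; thus $T$ meets $N_{B_n}(u)$ in at most $1+t$ vertices, all in $A$, and symmetrically for a centre in $B$. If the centre lies outside $N_{B_n}(u)$, I would translate adjacency in $B_n$ back into edge-incidence in $CQ_n$: such a centre is an edge $pq$ of $CQ_n$ avoiding both $v$ and $w$, so it can meet $A$ only through a common endpoint $v^i\in\{p,q\}$ and $B$ only through $w^j\in\{p,q\}$. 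Triangle-freeness of $CQ_n$ (Lemma 4.1) then rules out a common neighbour of $v$ and $w$, and shows $T$ meets $N_{B_n}(u)$ in at most two vertices: either two vertices of one clique (centre $[v^i,v^j]$) or one vertex of each clique (a bridging centre $[v^i,w^j]$), but never three.

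With this dichotomy in hand, let $x$ be the number of bridging members. They cover $x$ vertices of $A$ and $x$ of $B$; the remaining $n-1-x$ vertices of $A$ and the remaining $n-1-x$ vertices of $B$ must each be covered by members contributing at most $1+t$ vertices to a single clique, and these two families are disjoint from one another and from the bridging members. This yields $|\mathcal{F}|\ge x+2\lceil\frac{n-1-x}{1+t}\rceil$. I would then minimise the right-hand side over $x\ge 0$. Writing $n-1=q(1+t)+r$ with $0\le r\le t$, the value at $x=0$ is $2\lceil\frac{n-1}{1+t}\rceil$; spending one bridging star is profitable precisely when $r=1$ (equivalently $(1+t)\mid(n-2)$, which forces $t\le n-3$), where $x=1$ gives $2q+1=\frac{2n-4}{1+t}+1$, whereas for $r\ne 1$ the minimum stays at $x=0$. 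In each case the minimum equals the stated bound, contradicting the assumption $|\mathcal{F}|<$ bound.

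I expect the main obstacle to be the coverage classification for members with centre outside $N_{B_n}(u)$: establishing that no single star can capture three vertices of $N_{B_n}(u)$, and that a bridging star captures exactly one vertex from each clique, requires passing between adjacency in $B_n$ and edge-incidence in $CQ_n$ and invoking triangle-freeness to exclude the dangerous configurations. A secondary, arithmetic point is to carry out the minimisation over $x$ cleanly, confirming that increasing $x$ by a full block of $1+t$ is never profitable (the net change is $t-1>0$), so that only $x\in\{0,1\}$ can be optimal; this is exactly what separates the two branches of the bound.
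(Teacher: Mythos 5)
Your proposal is correct, and its skeleton coincides with the paper's: argue by contradiction, use $\kappa_1(B_n)=3n-4$ (Lemma 4.3 with $g=1$) to force a singleton component $u$, and then exploit the fact that $B_n[N_{B_n}(u)]$ consists of two disjoint copies of $K_{n-1}$ (Lemma \ref{TC}) together with triangle-freeness of $CQ_n$ (Lemma 4.1) to limit how stars can cover $N_{B_n}(u)$. Where you genuinely diverge is in the bookkeeping, and your version is more uniform and sharper. The paper never runs the neighborhood argument in most branches: for $2\le t\le n-3$ with $r\in\{0,1\}$ and for $n-2\le t\le 2n-4$ it simply checks $|V(\mathcal{F})|<\kappa(B_n)=2n-2$ and stops, reserving the singleton-plus-coverage analysis for $t=2n-3$ and for $r\ge 2$, where it distinguishes ``all centres inside $N_{B_n}(u)$'' from ``some centre outside'' and bounds the total coverage by $2n-2r<2n-2$. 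You instead apply the coverage argument in every case and package it as one optimization: with $x$ bridging members, $|\mathcal{F}|\ge x+2\lceil\frac{n-1-x}{1+t}\rceil$, and the minimum of the right-hand side over $x$ equals the claimed bound in each residue class (profitable bridging exactly when $r=1$). This buys uniformity (plain connectivity is never needed) and an explanation of why the $r=1$ branch is exactly one smaller, at the cost of slightly heavier arithmetic than the paper's quick dismissals; it also implicitly proves the stronger statement that the bound is the exact minimum number of members needed to cover $N_{B_n}(u)$. One cosmetic remark: your alternative of a centre $[v^i,v^j]$ outside $N_{B_n}(u)$ covering two vertices of one clique cannot actually occur, since such a vertex of $B_n$ would force the triangle $v,v^i,v^j$ in $CQ_n$; but as you only use it as an upper bound of two covered vertices, this conservatism is harmless.
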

\begin{proof} Suppose to the contrary that $\kappa^s(B_n; K_{1,t})\leq \frac{2n-4}{1+t}$ for $2\leq t\leq n-3$ and $r=1$, and $\kappa^s(B_n; K_{1,t})\leq 2\lceil\frac{n-1}{1+t}\rceil-1$ for otherwise.
Then $B_n$ has a $K_{1,t}$-substructure cut $\mathcal{F}$. Let $C$ be a smallest component of $B_n-\mathcal{F}$. Then $|V(\mathcal{F})|\geq \kappa(B_n)=2n-2$ by Lemma 4.2. For $2\leq t\leq n-3$, if $r=0$, then
$|V(\mathcal{F})|\leq (1+t)(2\lceil\frac{n-1}{1+t}\rceil-1)<2n-2$, a contradiction.
If $r=1$, then
$|V(\mathcal{F})|\leq (1+t)\frac{2n-4}{1+t}=2n-4<2n-2$,
 a contradiction. Similarly, for $n-2\leq t\leq 2n-4$, $|V(\mathcal{F})|\leq 1+t\leq 2n-3<2n-2$, a contradiction.
Then we consider the cases $t=2n-3$, or $2\leq t\leq n-3$ and $r\geq 2$. We find $|V(\mathcal{F})|\leq (1+t)(2\lceil\frac{n-1}{1+t}\rceil-1)=1+t\leq 2n-2$ for $t=2n-3$, and
$|V(\mathcal{F})|\leq (1+t)(2\lceil\frac{n-1}{1+t}\rceil-1)\leq (1+t)(2\frac{n-1+t}{1+t}-1)=2n-3+t\leq 3n-6$ for $2\leq t\leq n-3$ and $r\geq2$. By Lemma 4.3, $|V(\mathcal{F})|< \kappa_1(B_n)$.
Then $|V(C)|=1$, say $u=[v,w]$. Thus $N_{B_n}(u)\subseteq V(\mathcal{F})$, which implies that $B_n[N_{B_n}(u)]$ has a star $K_{1, 2n-3}$ for $t=2n-3$, contradicting Lemma \ref{TC}.
For $2\leq t\leq n-3$ and $r\geq 2$, $|\mathcal{F}|\leq 2\lceil\frac{n-1}{1+t}\rceil-1$. If all star-centers are in $N_{B_n}(u)$, then $2\lceil\frac{n-1}{1+t}\rceil-1$ stars can not cover all vertices of $N_{B_n}(u)$ by Lemma \ref{TC}. Thus there exists a star whose center $x$ is not in $N_{B_n}(u)\cup \{u\}$. We know that $x$ has at most two neighbors in $N_{B_n}(u)$ since $[v,v^i]$ and $[v,v^j]$ ($[w,w^i]$ and $[w,w^j]$) have no common neighbor in $V(B_n)-N_{B_n}(u)-\{u\}$. Then $V(\mathcal{F})\leq (1+t)\cdot2\cdot\lfloor\frac{n-1}{1+t}\rfloor+2= (1+t)\cdot2\cdot\frac{n-1-r}{1+t}+2=2n-2r<2n-2$ for $r\geq 2$. It is a contradiction.
\end{proof}

\begin{Lem}\label{SU} For $n\geq 4$ and $2\leq t\leq 2n-3$, let $r$ be remainder of $n-1$ divided by $1+t$. Then
\begin{equation*}
\kappa(B_n; K_{1,t})\leq
\left\{
\begin{aligned}
&\frac{2n-4}{1+t}+1,~~if~ 2\leq t\leq n-3~and~r=1,\\
&2\lceil\frac{n-1}{1+t}\rceil,~~otherwise.%n-2\leq t\leq 2n-3,~or~ 2\leq t\leq n-3~and~r\neq 1.\\
\end{aligned}
\right.
\end{equation*}
\end{Lem}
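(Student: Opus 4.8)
The statement is an upper bound, so the plan is to exhibit, for each parameter regime, an explicit $K_{1,t}$-structure cut $\mathcal{F}$ of the claimed size that isolates a single vertex. I would fix $u=[v,w]$ with $v=00\ldots0$ and $w=10\ldots0$, exactly as in the $K_{1,1}$ construction above. By Lemma~\ref{TC}, $N_{B_n}(u)$ splits into two \emph{independent} complete graphs $A=B_n[\{[v,v^i]\mid 0\le i\le n-2\}]\cong K_{n-1}$ and $B=B_n[\{[w,w^i]\mid 0\le i\le n-2\}]\cong K_{n-1}$. It therefore suffices to cover all $2(n-1)$ vertices of $N_{B_n}(u)$ by copies of $K_{1,t}$: once $N_{B_n}(u)\subseteq V(\mathcal{F})$, the vertex $u$ becomes a singleton while a far vertex such as $[11\ldots1,11\ldots10]$ survives, so $B_n-\mathcal{F}$ is disconnected and $\mathcal{F}$ is a structure cut.

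For the generic (\emph{otherwise}) bound I would cover $A$ by $\lceil\frac{n-1}{1+t}\rceil$ vertex-disjoint copies of $K_{1,t}$ and likewise $B$, giving $2\lceil\frac{n-1}{1+t}\rceil$ stars in total. When $t\le n-2$ a full $K_{1,t}$ embeds inside a $K_{n-1}$, so all but possibly the last star of each side lie entirely in $N_{B_n}(u)$; the last, possibly deficient, star is completed to an exact $K_{1,t}$ by attaching its remaining leaves among neighbours lying outside $N_{B_n}(u)$. This is harmless: we only need the inequality, and deleting extra vertices cannot reconnect the isolated $u$. The large-$t$ end ($n-2\le t\le 2n-3$) is the degenerate sub-case $\lceil\frac{n-1}{1+t}\rceil=1$, where one suitably centred star per side already covers the corresponding $K_{n-1}$.

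The only genuinely delicate regime is $2\le t\le n-3$ with $r=1$, where the target $\frac{2n-4}{1+t}+1$ is \emph{one less} than $2\lceil\frac{n-1}{1+t}\rceil$. Writing $n-1=q(1+t)+1$ with $q\ge1$, I would cover $A$ by $q$ full stars leaving exactly one uncovered vertex $a\in A$, and cover $B$ by $q$ full stars leaving one uncovered vertex $b\in B$; since $2n-4=2q(1+t)$, this uses $2q=\frac{2n-4}{1+t}$ stars so far. The saving comes from covering $a$ and $b$ simultaneously by a \emph{single} extra $K_{1,t}$: because $t\ge2$, this star can have a center $x\notin N_{B_n}(u)\cup\{u\}$ with $a$ and $b$ among its $t$ leaves (for $t=1$ no saving is possible, since a $K_{1,1}$ is an edge and $a\not\sim b$ as $A$ and $B$ are independent --- this is precisely why the even-$n$ case of the $K_{1,1}$ theorem does not gain). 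The main obstacle is establishing the existence of this common neighbour $x$. Translating through the line-graph identity $B_n=L(CQ_n)$ of Lemma~4.2, the vertices $a$ and $b$ correspond to edges of $CQ_n$ incident to $v$ and to $w$ respectively, and $x$ must be an edge of $CQ_n$, distinct from $vw$, meeting both of them; I would choose which vertices to leave uncovered so that such an edge exists, arguing directly from the crossed-cube adjacency and its even/odd parity clauses in Definition~\ref{Bn} and mirroring the parity bookkeeping already used in the $K_{1,1}$ construction. Once $x$ is found, its remaining $t-2$ leaves are filled from its $2n-2\ge t$ neighbours, the star is a bona fide $K_{1,t}$, and the total is $2q+1=\frac{2n-4}{1+t}+1$, which completes the cut and the bound.
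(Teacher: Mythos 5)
Your proposal is correct and follows essentially the same route as the paper's proof: the same isolated vertex $u=[v,w]$, the same covering of the two cliques of $N_{B_n}(u)$ by stars whose deficient members are completed with leaves outside $N_{B_n}(u)$, and the same key saving in the $r=1$ case via a single extra star centered outside $N_{B_n}(u)\cup\{u\}$ whose leaves absorb the two leftover vertices. The only detail you defer---the existence of the common neighbour $x$---is realized in the paper by taking the leftover vertices to be $[v,v^{n-2}]$ and $[w,w^{n-2}]$ and setting $x=[v^{n-2},w^{n-2}]$, which is a vertex of $B_n$ because $v^{n-2}$ and $w^{n-2}$ are adjacent in $CQ_n$, exactly as your line-graph reduction predicts.
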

\begin{proof} Let $u=[v, w]$ be a vertex of $B_n$ with $v=00\ldots 00$ and $w=10\ldots 00$. Then $N_{B_n}(u)=\{[v,v^i], [w, w^i]|0\leq i\leq n-2\}$.

For $n-2\leq t\leq 2n-3$, let $S_1$ ($S_2$) be a star with center $[v,v^0]$ ($[w, w^0]$) and the leaves are neighbors of $[v,v^0]$ ($[w, w^0]$). Precisely,
\begin{equation*}
\left\{
\begin{aligned}
&S_1=\{[v,v^0]; [v, v^i], t_j|1\leq i\leq n-2, n-1\leq j\leq t\},\\
&S_2=\{[w,w^0]; [w, w^i], t'_j|1\leq i\leq n-2, n-1\leq j\leq t\},\\
\end{aligned}
\right.
\end{equation*}
where $\{t_j|n-1\leq j\leq t\}\subseteq N_{B_n}([v, v^0])\setminus \{u,[v, v^i]|1\leq i\leq n-2\}$ and $\{t'_j|n-1\leq j\leq t\}\subseteq N_{B_n}([w, w^0])\setminus \{u,[w, w^i]|1\leq i\leq n-2\}$. Let $\mathcal{F}=\{S_1, S_2\}$. Since $N_{B_n}(u)\subseteq V(\mathcal{F})$ and $[11\ldots 11, 11\ldots 10]\notin V(B_n-\mathcal{F})$, $B_n-\mathcal{F}$ has two components. Then $u$ is a singleton and $[11\ldots 11, 11\ldots 10]$ belongs to  the other component.

For $2\leq t\leq n-3$ and
$1\leq i\leq \lfloor\frac{n-1}{1+t}\rfloor$, we set $S_i$ ($S'_i$) as a star with center $[v, v^{(i-1)(t+1)}]$ ($[w, w^{(i-1)(t+1)}]$) and the leaves are $[v, v^{(i-1)(t+1)+k}]$ ($[w, w^{(i-1)(t+1)+k}]$) for each $1\leq k\leq t$. The more detailed expressions are
\begin{equation*}
\left\{
\begin{aligned}
&S_i=\{[v, v^{(i-1)(t+1)}]; [v, v^{(i-1)(t+1)+k}]|1\leq k\leq t\}\\~~ %for ~1\leq i\leq \lfloor\frac{n-1}{1+t}\rfloor,
&S'_i=\{[w, w^{(i-1)(t+1)}]; [w, w^{(i-1)(t+1)+k}]|1\leq k\leq t\}~~ %for~ 1\leq i\leq \lfloor\frac{n-1}{1+t}\rfloor.
\end{aligned}
\right.
\end{equation*}
If $r=1$, then we set $S''=\{[v^{n-2}, w^{n-2}]; [v, v^{n-2}], [w, w^{n-2}], t_i|1\leq i\leq t-2\}$
where $\{t_i|1\leq i\leq t-2\}\subseteq N_{B_n}([v^{n-2}, w^{n-2}])\setminus \{[v, v^{n-2}], [w, w^{n-2}]\}$.
If $r\geq 2$, then we set
\begin{equation*}
\left\{
\begin{aligned}
&S''_1=\{[v, v^{n-2}]; [v, v^i], t_j|n-r-1\leq i\leq n-3, 1\leq j\leq t-r+1\},\\
&S''_2=\{[w, w^{n-2}]; [w, w^i], t'_j|n-r-1\leq i\leq n-3, 1\leq j\leq t-r+1\},\\
\end{aligned}
\right.
\end{equation*}
where $\{t_j|1\leq j\leq t-r\}\subseteq N_{B_n}([v, v^{n-2}])\setminus \{u, [v, v^{i}]|0\leq i\leq n-3\}$
and $\{t'_j|1\leq j\leq t-r\}\subseteq N_{B_n}([w, w^{n-2}])\setminus \{u, [w, w^{i}]|0\leq i\leq n-3\}$.
Let $\mathcal{F}=\{S_i, S'_i|1\leq i\leq \lfloor\frac{n-1}{1+t}\rfloor\}$ for $r=0$, $\mathcal{F}=\{S_i, S'_i, S''|1\leq i\leq \lfloor\frac{n-1}{1+t}\rfloor\}$ for $r=1$ and $\mathcal{F}=\{S_i, S'_i, S''_1, S''_2|1\leq i\leq \lfloor\frac{n-1}{1+t}\rfloor\}$ for $r\geq 2$. We find $N_{B_n}(u)\subseteq V(\mathcal{F})$ and $[11\ldots 11, 11\ldots 10]\notin V(B_n-\mathcal{F})$, which implies that $B_n-\mathcal{F}$ has two components and $u$ is a singleton and $[11\ldots 11, 11\ldots 10]$ belongs to other component.
\end{proof}
By Lemmas \ref{SL} and \ref{SU}, we have the following result.
\begin{The}\label{S} For $n\geq 4$ and $2\leq t\leq 2n-3$, let $r$ be remainder of $n-1$ divided by $1+t$. Then
\begin{equation*}
\kappa(B_n; K_{1,t})=\kappa^s(B_n; K_{1,t})=
\left\{
\begin{aligned}
&\frac{2n-4}{1+t}+1,~~if~ 2\leq t\leq n-3~and~r=1,\\
&2\lceil\frac{n-1}{1+t}\rceil,~~otherwise.%n-2\leq t\leq 2n-3,~or~ 2\leq t\leq n-3~and~r\neq 1.\\
\end{aligned}
\right.
\end{equation*}
\end{The}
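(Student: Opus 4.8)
The plan is to prove the equality by a squeeze (sandwich) argument: both one-sided bounds have already been established in Lemmas~\ref{SL} and~\ref{SU}, so this theorem is their immediate consequence together with the general inequality relating substructure and structure connectivity. For brevity write $f(n,t)$ for the piecewise value appearing on the right-hand side of the statement.

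First I would recall that every $K_{1,t}$-structure cut is in particular a $K_{1,t}$-substructure cut, so by the definitions given in the introduction the inequality $\kappa^s(B_n;K_{1,t})\leq \kappa(B_n;K_{1,t})$ holds for all admissible pairs $(n,t)$. Next I would simply read off the two matching bounds: Lemma~\ref{SL} supplies $\kappa^s(B_n;K_{1,t})\geq f(n,t)$, while Lemma~\ref{SU}, through the explicit $K_{1,t}$-structure cuts constructed there, supplies $\kappa(B_n;K_{1,t})\leq f(n,t)$. Concatenating the three inequalities yields
\[
f(n,t)\leq \kappa^s(B_n;K_{1,t})\leq \kappa(B_n;K_{1,t})\leq f(n,t),
\]
so every term equals $f(n,t)$, which is exactly the claimed formula, and $r$ being the remainder of $n-1$ modulo $1+t$ selects the correct branch of $f$ in each case.

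The genuine difficulty lies not in this theorem but in the two lemmas on which it rests, so I do not expect any additional obstacle here. The hard work in Lemma~\ref{SL} is the fault-counting: one must combine $\kappa(B_n)=2n-2$ (Lemma~4.2), the $1$-extra connectivity $\kappa_1(B_n)$ (Lemma~4.3), and the fact that $B_n[N_{B_n}(u)]$ splits into two disjoint cliques $K_{n-1}$ (Lemma~\ref{TC}) to force any separating family to contain at least $f(n,t)$ members in each residue case of $r$; the subtle subcase is $r\geq 2$, where one argues that a star centered outside $N_{B_n}(u)\cup\{u\}$ can meet the neighborhood in at most two vertices. The work in Lemma~\ref{SU} is instead constructive, exhibiting cuts that realize $f(n,t)$ separately for $r=0$, $r=1$, and $r\geq 2$. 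Once those are granted, the present statement follows with no further computation, and I would present it in precisely that short form.
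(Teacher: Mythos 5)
Your proposal is correct and takes precisely the same route as the paper: the paper derives Theorem~\ref{S} in one line from Lemma~\ref{SL} (the lower bound on $\kappa^s(B_n;K_{1,t})$) and Lemma~\ref{SU} (the matching upper bound on $\kappa(B_n;K_{1,t})$), combined with the general inequality $\kappa^s(B_n;K_{1,t})\leq\kappa(B_n;K_{1,t})$ stated in the introduction. Your sandwich argument simply makes that implicit reasoning explicit, so there is nothing to add.
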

\begin{Lem}\label{PL} For $n\geq 4$ and $4\leq k\leq 2n-1$, we have
\begin{equation*}
\kappa^s(B_n; P_{k})\geq
\left\{
\begin{aligned}
&\frac{2n-2}{k},~~if~ 4\leq k\leq n-1~and~k\mid n-1,\\
&\lceil\frac{2n-1}{k}\rceil,~~otherwise.\\
\end{aligned}
\right.
\end{equation*}
\end{Lem}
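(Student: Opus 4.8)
My plan is to mirror the contradiction argument of Lemma \ref{SL}, splitting into the two regimes that appear in the statement. I would suppose that $\mathcal{F}$ is a $P_k$-substructure cut of $B_n$ with fewer members than the claimed bound. Every member is then a connected subgraph of $P_k$, hence has at most $k$ vertices, so $|V(\mathcal{F})|\le k|\mathcal{F}|$. Since $B_n$ has $n2^{n-1}\gg 2n-2$ vertices, deleting $V(\mathcal{F})$ cannot leave a trivial graph, so $\mathcal{F}$ must disconnect $B_n$ and therefore $|V(\mathcal{F})|\ge \kappa(B_n)=2n-2$ by Lemma 4.2.

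In the first regime ($4\le k\le n-1$ and $k\mid n-1$, so that $\frac{2n-2}{k}$ is an integer) I would assume $|\mathcal{F}|\le \frac{2n-2}{k}-1$ and simply count: $|V(\mathcal{F})|\le k\bigl(\frac{2n-2}{k}-1\bigr)=2n-2-k<2n-2$, contradicting $|V(\mathcal{F})|\ge \kappa(B_n)$. This regime needs nothing beyond regularity and the value of $\kappa(B_n)$.

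For the second regime I would assume $|\mathcal{F}|\le \lceil\frac{2n-1}{k}\rceil-1$. The counting $|V(\mathcal{F})|\le k\bigl(\lceil\frac{2n-1}{k}\rceil-1\bigr)<2n-1$ forces $|V(\mathcal{F})|\le 2n-2$, so in fact $|V(\mathcal{F})|=2n-2$. Next I would bring in the $1$-extra connectivity: since $|V(\mathcal{F})|=2n-2<3n-4=\kappa_1(B_n)$ (Lemma 4.3 with $g=1$), deleting $V(\mathcal{F})$ must create a singleton component $\{u\}$, for otherwise $V(\mathcal{F})$ would be a vertex cut all of whose components have at least two vertices. Then $N_{B_n}(u)\subseteq V(\mathcal{F})$ and $|N_{B_n}(u)|=2n-2=|V(\mathcal{F})|$, whence $V(\mathcal{F})=N_{B_n}(u)$.

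The decisive step uses Lemma \ref{TC}: $B_n[N_{B_n}(u)]$ consists of two vertex-disjoint, mutually non-adjacent copies $A,B$ of $K_{n-1}$. Since every member of $\mathcal{F}$ is a path with all vertices in $N_{B_n}(u)$ and with consecutive vertices adjacent in $B_n$, and since there is no edge between $A$ and $B$, each member lies entirely inside $A$ or entirely inside $B$. The members inside $A$ must cover all $n-1$ vertices of $A$, so there are at least $\lceil\frac{n-1}{k}\rceil$ of them, and likewise for $B$; hence $|\mathcal{F}|\ge 2\lceil\frac{n-1}{k}\rceil$. Finally I would verify the elementary inequality $\lceil\frac{2n-1}{k}\rceil\le 2\lceil\frac{n-1}{k}\rceil$, which holds exactly in this regime (where $k\nmid n-1$, or $k\ge n$; it fails when $k\mid n-1$, which is precisely why that case is separated). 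This gives $|\mathcal{F}|\ge 2\lceil\frac{n-1}{k}\rceil\ge\lceil\frac{2n-1}{k}\rceil>\lceil\frac{2n-1}{k}\rceil-1$, the desired contradiction. The main obstacle is the reduction to a singleton component via $\kappa_1(B_n)$ together with the bookkeeping of the inequality relating $\lceil\frac{2n-1}{k}\rceil$ and $2\lceil\frac{n-1}{k}\rceil$; once $V(\mathcal{F})=N_{B_n}(u)$ is pinned down, the two-clique structure from Lemma \ref{TC} makes the remaining counting transparent.
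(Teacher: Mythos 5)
Your proof is correct and follows essentially the same route as the paper's: lower-bound $|V(\mathcal{F})|$ by $\kappa(B_n)=2n-2$ (Lemma 4.2), force a singleton component $\{u\}$ via $\kappa_1(B_n)=3n-4$ (Lemma 4.3), and then exploit the fact that $B_n[N_{B_n}(u)]$ is two disjoint copies of $K_{n-1}$ (Lemma 4.4). Your uniform covering count $|\mathcal{F}|\ge 2\lceil\frac{n-1}{k}\rceil$ combined with $2\lceil\frac{n-1}{k}\rceil\ge\lceil\frac{2n-1}{k}\rceil$ (valid since $k\nmid n-1$ throughout the second regime) merely replaces the paper's sub-case split (direct counting for $n\le k\le 2n-3$; impossibility of a Hamiltonian path or of $\frac{2n-2}{k}$ vertex-disjoint $P_k$'s for the remaining cases) and is, if anything, a slightly cleaner way to finish.
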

\begin{proof} Suppose $\kappa^s (B_n; P_{k})\leq \frac{2n-2}{k}-1$ for $4\leq k\leq n-1$ and $k\mid n-1$, and $\kappa^s (B_n; P_{k})\leq \lceil\frac{2n-1}{k}\rceil-1$ for otherwise. Then there exists a $P_k$-substructure cut $\mathcal{F}$. Let $C$ be a smallest component of $B_n-\mathcal{F}$. Then $|V(\mathcal{F})|\geq \kappa (B_n)=2n-2$ by Lemma 4.2.
For $4\leq k\leq n-1$ and $k\mid n-1$, $|V(\mathcal{F})|\leq k\cdot (\frac{2n-2}{k}-1)=2n-2-k<2n-2$, a contradiction. Similarly, for $n\leq k\leq 2n-3$, $|V(\mathcal{F})|\leq k\cdot (\lceil\frac{2n-1}{k}\rceil-1)=k\leq 2n-3<2n-2$, a contradiction.

We have $|V(C)|=1$, say $C=\{u\}$, for $k=2n-2$ or $4\leq k\leq n-1$ and $k\nmid n-1$ since $|V(\mathcal{F})|\leq k\cdot (\lceil\frac{2n-1}{k}\rceil-1)\leq k(\frac{2n-1+k-1}{k}-1)=2n-2<\kappa_1(B_n)$ by Lemma 4.3.
Then $N_{B_n}(u)\subseteq V(\mathcal {F})$, which implies that $B_n[N_{B_n}(u)]$ has a hamiltonian path $P_{2n-2}$ and $\frac{2n-2}{k}$ vertex-disjoint $P_{k}$'s for $4\leq k\leq n-1$ and $k\nmid n-1$. It is impossible by Lemma \ref{TC}.
\end{proof}

\begin{Lem}\label{PU} For $n\geq 4$ and $4\leq k\leq 2n-1$, we have
\begin{equation*}
\kappa(B_n; P_{k})\leq
\left\{
\begin{aligned}
&\frac{2n-2}{k},~~if~ 4\leq k\leq n-1~and~k\mid n-1,\\
&\lceil\frac{2n-1}{k}\rceil,~~otherwise.\\
\end{aligned}
\right.
\end{equation*}
\end{Lem}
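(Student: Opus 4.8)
The plan is to exhibit, in each case, an explicit $P_k$-structure cut $\mathcal{F}$ of the asserted size that isolates a single vertex. Fix $u=[v,w]$ with $v=00\ldots 0$ and $w=10\ldots 0$, so that $vw\in E(CQ_n)$. By Lemma~\ref{TC}, $N_{B_n}(u)=A\cup B$ with $A=\{[v,v^i]\mid 0\le i\le n-2\}$ and $B=\{[w,w^i]\mid 0\le i\le n-2\}$, where $B_n[A]\cong K_{n-1}$ and $B_n[B]\cong K_{n-1}$ are two independent cliques. The whole construction will cover $N_{B_n}(u)$ by vertex-disjoint copies of $P_k$ while keeping $u$ out of $V(\mathcal{F})$; since $|V(B_n)|=n2^{n-1}$ is far larger than $|V(\mathcal{F})|+1$, the graph $B_n-\mathcal{F}$ then has $u$ as a singleton component together with a nonempty remainder (e.g.\ the far vertex $[1\ldots 1,1\ldots 10]$), hence is disconnected, so $\mathcal{F}$ is a cut.

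Two structural facts drive the construction. First, because $B_n[A]$ and $B_n[B]$ are complete, any subset of $A$ (or of $B$) can be arranged as a path, and any such path may be prolonged outside $N_{B_n}(u)$ by appending unused neighbours one vertex at a time; the $(2n-2)$-regularity and the size of $B_n$ guarantee enough external vertices to pad every path to exactly $k$ vertices while staying disjoint and avoiding $u$. Second, the vertex $[v^{n-2},w^{n-2}]$ is adjacent to $[v,v^{n-2}]\in A$ and to $[w,w^{n-2}]\in B$ (the same bridge already used in Lemma~\ref{SU}), so a single path may cross from clique $A$ to clique $B$ through it.

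Now write $n-1=qk+r$ with $0\le r<k$. If $r=0$ (equivalently $4\le k\le n-1$ and $k\mid n-1$), partition $A$ into $\tfrac{n-1}{k}$ disjoint $P_k$'s and likewise $B$; these $\tfrac{2n-2}{k}$ paths lie entirely in $N_{B_n}(u)$ and cover it, giving the first bound. If $r\ge1$, place $q$ disjoint $P_k$'s inside $A$ and $q$ inside $B$, chosen so that $[v,v^{n-2}]$ and $[w,w^{n-2}]$ are among the $r$ uncovered vertices of each clique. When $2r<k$, cover the $2r$ remaining neighbours by one path that runs through the $r$ leftovers of $A$, the bridge $[v^{n-2},w^{n-2}]$, and the $r$ leftovers of $B$, padded externally to length $k$; this yields $2q+1$ paths. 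When $2r\ge k$, cover the leftovers by two paths, one per clique, each padded externally to length $k$, yielding $2q+2$ paths. A short computation shows $2q+1=\lceil\tfrac{2n-1}{k}\rceil$ exactly when $2r<k$ and $2q+2=\lceil\tfrac{2n-1}{k}\rceil$ exactly when $2r\ge k$, matching the ``otherwise'' bound; the subrange $n\le k\le 2n-1$ is the instance $q=0$, $r=n-1$ of the same dichotomy.

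The routine arithmetic (matching $2q+1$, $2q+2$ to the ceiling, and checking the endpoints $k=2n-2,2n-1$) is straightforward. The main obstacle is the second paragraph: one must verify that the external padding and the bridging can actually be realized as \emph{vertex-disjoint} paths in $B_n$ that are isomorphic to $P_k$, never re-enter $N_{B_n}(u)$ in an unintended way, and never touch $u$. This is where the regularity of $B_n$, its size relative to the $O(k)$ vertices consumed, and the existence and adjacencies of the bridge vertex $[v^{n-2},w^{n-2}]$ must be invoked carefully; everything else is bookkeeping.
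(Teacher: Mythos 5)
Your construction is correct and rests on exactly the same skeleton as the paper's proof: both isolate the same vertex $u=[v,w]$, both exploit Lemma \ref{TC} (two disjoint cliques $A,B$ on $N_{B_n}(u)$), both use the bridge vertex $[v^{n-2},w^{n-2}]$ to cross between the cliques, and both pad with external vertices; indeed your cases $r=0$, $q=0$ with $2r\geq k$, and $q=0$ with $2r<k$ reproduce the paper's constructions for $k\mid n-1$, for $n\leq k\leq 2n-2$, and for $k=2n-1$ verbatim. The only genuine divergence is the case $4\leq k\leq n-1$ with $k\nmid n-1$: the paper builds a single long path $P_{3n-2}$ (all of $A$, the bridge, all of $B$, then an extension into the external clique around $w^0$) and cuts it into $\lceil\frac{2n-1}{k}\rceil$ consecutive length-$k$ segments, whereas you pack $\lfloor\frac{n-1}{k}\rfloor$ copies of $P_k$ into each clique and patch the $2r$ leftovers with either one bridging path ($2r<k$) or two externally padded paths ($2r\geq k$). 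What the paper's chopping trick buys is that all the delicate issues you flag as "the main obstacle" vanish: consecutive segments of one path are automatically vertex-disjoint and need only one external region. What your pack-and-patch version buys is a single unified dichotomy ($2r$ versus $k$) covering the whole range $4\leq k\leq 2n-1$. Two remarks defuse your flagged obstacle. First, the paper's definition of a subgraph cut does not require the members of $\mathcal{F}$ to be vertex-disjoint, so disjointness of the paddings is a self-imposed constraint you may simply drop (this matters at the tight endpoint $k=2n-2$, where the two natural padding cliques around $v^{n-2}$ and $w^{n-2}$ share the bridge vertex; even the paper's own two paths meet there). Second, the padding itself is unproblematic for a structural reason you should state rather than wave at regularity and size: by triangle-freeness of $CQ_n$ (Lemma 4.1), the external clique around $v^j$ meets $N_{B_n}(u)\cup\{u\}$ only in the single vertex $[v,v^j]$, so a leftover path ending at $[v,v^j]$ can always be prolonged inside that clique (which has $n-1$ further vertices, more than any padding you ever need) without re-entering the neighborhood or touching $u$.
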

\begin{proof} Let $u=[v, w]$ be a vertex of $B_n$ with $v=00\ldots 00$ and $w=10\ldots 00$. Then $N_{B_n}(u)=\{[v,v^i], [w, w^i]|0\leq i\leq n-2\}$. For convenience, we set $P_{n-1}(w)=\langle [w,w^0], [w, w^1],\\ \ldots, [w, w^{n-2}]\rangle$ as a path with $n-1$ vertices.

 For $k=2n-1$, we set
 $\mathcal{F}=P_{2n-1}=\langle P_{n-1}(v), [v^{n-2}, w^{n-2}], P^{-1}_{n-1}(w) \rangle$.
 For $n\leq k\leq 2n-2$, let $P^1_{2n-2}=\langle P_{n-1}(v), P_{n-1}(v^{n-2})-[v,v^{n-2}]\cup [v^{n-2},v^{n-2,n-1}]\rangle$ and $P^2_{2n-2}=\langle P_{n-1}(w), P_{n-1}(w^{n-2})-[w,w^{n-2}]\cup [w^{n-2}, w^{n-2,n-1}]\rangle$. Then we set $\mathcal{F}=\{P^1_{k}, P^2_{k}\}$,  where $P^1_{k}$ ($P^2_{k}$) along $P^1_{2n-2}$ ($P^2_{2n-2}$) with end-vertices $[v,v^0]$ ($[w,w^0]$) and the $k$-th vertex of $P^1_{2n-2}$ ($P^2_{2n-2}$).

 For $4\leq k\leq n-1$ and $k\mid n-1$, we set $\mathcal{F}=\{P^i_{k}|1\leq i\leq \frac{2n-2}{k}\}$, where $P^1_k$ along $P_{n-1}(v)$ with end-vertices $[v,v^0]$ and $[v,v^{k-1}]$, $P^2_k$ along $P_{n-1}(v)$ with end-vertices $[v,v^{k}]$ and $[v,v^{2k-1}]$, by the analogous, $P^{\frac{n-1}{k}}_k$ along $P_{n-1}(v)$ with end-vertices $[v,v^{n-k-1}]$ and $[v,v^{n-2}]$. Similarly, we set $P^{\frac{n-1}{k}+1}_k$ along $P_{n-1}(w)$ with end-vertices $[w,w^0]$ and $[w,w^{k-1}]$, by the analogous, $P^{2\frac{n-1}{k}}_k$ along $P_{n-1}(v)$ with end-vertices $[w,w^{n-k-1}]$ and $[w,w^{n-2}]$.
For $4\leq k\leq n-1$ and $k\nmid n-1$, let $P_{3n-2}=\langle P_{2n-1}, P_{n-1}(w^0)-[w,w^0]\cup [w^0, w^{0,n-1}]\rangle$ and $\mathcal{F}=\{P^i_{k}|1\leq i\leq \lceil\frac{2n-1}{k}\rceil\}$, where $P^1_k$ along $P_{3n-2}$ with end-vertices $[v,v^0]$ and $[v,v^{k-1}]$,   $P^2_k$ along $P_{3n-2}$ with end-vertices $[v,v^{k}]$ and $[v,v^{2k-1}]$, by the analogous, $P^{\lceil\frac{2n-1}{k}\rceil}_k$ along $P_{3n-2}$ with end-vertices are the $(k(\lceil\frac{2n-1}{k}\rceil-1)+1)$-th  and $k\lceil\frac{2n-1}{k}\rceil$-th vertices of $P_{3n-2}$.

 We find $N_{B_n}(u)\subseteq V(\mathcal{F})$ and $[11\ldots 11, 11\ldots 10]\notin V(B_n-\mathcal{F})$, which implies that $B_n-\mathcal{F}$ has two components. Then $u$ is a singleton and $[11\ldots 11, 11\ldots 10]$ belongs to other component.
\end{proof}

\begin{The}\label{P}For $n\geq 4$ and $4\leq k\leq 2n-1$, we have
\begin{equation*}
\kappa(B_n; P_{k})=\kappa^s(B_n; P_k)=
\left\{
\begin{aligned}
&\frac{2n-2}{k},~~if~ 4\leq k\leq n-1~and~k\mid n-1,\\
&\lceil\frac{2n-1}{k}\rceil,~~otherwise.\\
\end{aligned}
\right.
\end{equation*}
\end{The}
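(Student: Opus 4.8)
The plan is to derive the theorem as an immediate consequence of the two preceding lemmas, sandwiching both connectivity parameters between matching bounds. The hinge of the argument is the universal inequality $\kappa^s(G;H)\leq \kappa(G;H)$ recalled in the introduction: every $P_k$-structure cut is in particular a $P_k$-substructure cut, since each of its members, being isomorphic to $P_k$, is isomorphic to a connected subgraph of $P_k$. This is precisely what allows the lower bound established for the substructure parameter and the upper bound established for the structure parameter to close up on one another, exactly as in the earlier arguments for $K_{1,t}$.

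First I would invoke Lemma \ref{PL}, which furnishes the lower bound $\kappa^s(B_n;P_k)\geq \frac{2n-2}{k}$ when $4\leq k\leq n-1$ and $k\mid n-1$, and $\kappa^s(B_n;P_k)\geq \lceil\frac{2n-1}{k}\rceil$ otherwise. Next I would invoke Lemma \ref{PU}, which exhibits an explicit $P_k$-structure cut and hence yields the matching upper bound $\kappa(B_n;P_k)\leq \frac{2n-2}{k}$ (respectively $\lceil\frac{2n-1}{k}\rceil$) in the same two regimes. Writing $f(n,k)$ for the common right-hand side in each case, these two lemmas together with the universal inequality give the chain
$$f(n,k)\leq \kappa^s(B_n;P_k)\leq \kappa(B_n;P_k)\leq f(n,k),$$
so equality holds throughout and both parameters coincide with $f(n,k)$, as asserted.

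At this stage there is essentially no genuine obstacle remaining: all of the combinatorial work has already been discharged inside Lemmas \ref{PL} and \ref{PU}, the former resting on the triangle-free structure of $B_n[N_{B_n}(u)]$ via Lemma \ref{TC} and on the extra-connectivity estimate of Lemma 4.3, the latter on a direct construction of disjoint paths covering $N_{B_n}(u)$. The only point requiring mild care is bookkeeping, namely checking that the case split "$4\leq k\leq n-1$ and $k\mid n-1$" versus "otherwise" is partitioned identically in both lemmas, so that the upper and lower bounds are compared within the correct regime before the sandwich is applied. Once this is confirmed the theorem follows immediately.
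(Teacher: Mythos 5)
Your proposal is correct and matches the paper's own (implicit) argument exactly: the theorem is obtained by sandwiching, via the universal inequality $\kappa^s(B_n;P_k)\leq\kappa(B_n;P_k)$, the lower bound of Lemma \ref{PL} against the upper bound of Lemma \ref{PU}, whose case splits indeed coincide. Nothing further is needed.
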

We have $\kappa^s(B_n; C_k)\leq \kappa^s(B_n; P_k)$. By the similar argument as Lemma \ref{PL}, we obtain the following result.
\begin{The}
For $n\geq 4$ and $4\leq k\leq 2n-1$, we have
\begin{equation*}
\kappa^s(B_n; C_k)=
\left\{
\begin{aligned}
&\frac{2n-2}{k},~~if~ 4\leq k\leq n-1~and~k\mid n-1,\\
&\lceil\frac{2n-1}{k}\rceil,~~otherwise.\\
\end{aligned}
\right.
\end{equation*}
\end{The}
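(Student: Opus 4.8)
The plan is to trap $\kappa^s(B_n;C_k)$ between the two estimates already available for paths. For the upper bound I would invoke the observation recorded just before the statement: since $C_k$ has $k$ vertices, every path on at most $k$ vertices is a connected subgraph of $C_k$, so any $P_k$-substructure cut is simultaneously a $C_k$-substructure cut. Hence $\kappa^s(B_n;C_k)\le\kappa^s(B_n;P_k)$, and Theorem \ref{P} evaluates the right-hand side as exactly the claimed formula. This half is immediate.

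For the lower bound I would replay the proof of Lemma \ref{PL} almost verbatim. Suppose to the contrary that $B_n$ has a $C_k$-substructure cut $\mathcal{F}$ whose size is one less than the stated value, and let $C$ be a smallest component of $B_n-\mathcal{F}$. The only structural input needed is that a connected subgraph of $C_k$ is either a path on at most $k$ vertices or $C_k$ itself, so every member of $\mathcal{F}$ carries at most $k$ vertices and $|V(\mathcal{F})|\le k\,|\mathcal{F}|$, exactly the bound used in Lemma \ref{PL}. Consequently, in the ranges $4\le k\le n-1$ with $k\mid n-1$ and $n\le k\le 2n-3$, the same computation gives $|V(\mathcal{F})|<\kappa(B_n)=2n-2$, so $\mathcal{F}$ cannot be a cut by Lemma 4.2; these contradictions are identical to the path case.

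In the remaining ranges ($k=2n-2$, or $4\le k\le n-1$ with $k\nmid n-1$) the bound reads $|V(\mathcal{F})|\le 2n-2<\kappa_1(B_n)$, so Lemma 4.3 forces $|V(C)|=1$, say $C=\{u\}$. Then $N_{B_n}(u)\subseteq V(\mathcal{F})$, and because $|V(\mathcal{F})|\le 2n-2=|N_{B_n}(u)|$ we actually have $V(\mathcal{F})=N_{B_n}(u)$. By Lemma \ref{TC} the set $N_{B_n}(u)$ induces two disjoint copies of $K_{n-1}$, so each member of $\mathcal{F}$ lies inside a single clique; covering a clique on $n-1$ vertices needs at least $\lceil\frac{n-1}{k}\rceil$ pieces, whence covering both needs at least $2\lceil\frac{n-1}{k}\rceil$ pieces. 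Writing $n-1=qk+r$ with $0<r<k$ I would check that $2\lceil\frac{n-1}{k}\rceil=2q+2$ strictly exceeds $\lceil\frac{2n-1}{k}\rceil-1$, the number of pieces $\mathcal{F}$ actually has, giving the final contradiction.

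The step I expect to be the real obstacle, and the only place where cycles behave differently from paths, is this last covering argument. I must make sure that permitting a member of $\mathcal{F}$ to be a full cycle $C_k$ does not let it straddle the two cliques: but a cycle confined to $N_{B_n}(u)$ still lies inside one $K_{n-1}$, since there is no edge between the two cliques by Lemma \ref{TC}. Thus a cycle piece obeys the same per-clique accounting as a path piece, and in the extreme case $k=2n-2$ a single $P_{2n-2}$ or $C_{2n-2}$ still cannot span the disconnected $B_n[N_{B_n}(u)]$. Once this is secured, the counting inequality closes the argument and the value coincides with $\kappa^s(B_n;P_k)$.
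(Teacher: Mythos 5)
Your proposal is correct and follows essentially the same route as the paper: the paper also obtains the upper bound from $\kappa^s(B_n;C_k)\le\kappa^s(B_n;P_k)$ together with Theorem \ref{P}, and gets the lower bound by repeating the argument of Lemma \ref{PL}, where a connected subgraph of $C_k$ still has at most $k$ vertices and, by Lemma \ref{TC}, cannot straddle the two disjoint copies of $K_{n-1}$ in $B_n[N_{B_n}(u)]$. Your write-up merely makes explicit the counting step (that $2\lceil\frac{n-1}{k}\rceil$ exceeds $\lceil\frac{2n-1}{k}\rceil-1$) that the paper leaves implicit.
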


\begin{Lem}\label{CL} For $n\geq 5$ and $3\leq k\leq 2n$, let $r$ be remainder of $n-1$ divided by $k$. Then
\begin{equation*}
\kappa(B_n; C_k)\geq
\left\{
\begin{aligned}
&2\lceil\frac{n-1}{k}\rceil-1,~~if~3\leq k\leq n-1~and~1\leq r\leq \lfloor\frac{k}{2}\rfloor-1,~or~ k=2n,\\
&3,~~if~k=n,\\
&2\lceil\frac{n-1}{k}\rceil,~~otherwise.\\
\end{aligned}
\right.
\end{equation*}
\end{Lem}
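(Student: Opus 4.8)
The plan is to argue by contradiction. Suppose $\mathcal{F}$ is a $C_k$-structure cut of $B_n$ whose size is exactly one less than the claimed lower bound, and show that $\mathcal{F}$ cannot disconnect $B_n$. Since each member is a copy of $C_k$, we have $|V(\mathcal{F})|\le k|\mathcal{F}|$, while, $\mathcal{F}$ being a cut, Lemma 4.2 gives $|V(\mathcal{F})|\ge \kappa(B_n)=2n-2$. First I would dispose of the ``easy'' ranges in which these two inequalities are already incompatible. For example, when $1\le r\le \lfloor\frac{k}{2}\rfloor-1$ the bound equals $2\lceil\frac{n-1}{k}\rceil-1=2\lfloor\frac{n-1}{k}\rfloor+1$, so the assumption $|\mathcal{F}|\le 2\lfloor\frac{n-1}{k}\rfloor$ forces $|V(\mathcal{F})|\le 2k\lfloor\frac{n-1}{k}\rfloor=2(n-1-r)<2n-2$, contradicting $\kappa(B_n)=2n-2$. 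The same one-line computation settles $k=2n$, the subcase $r=0$, the subcase $r>\frac{k}{2}$ of ``otherwise'', and every $k$ with $n+1\le k\le 2n-3$.

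The remaining boundary cases, namely $r=\lfloor\frac{k}{2}\rfloor$ for $3\le k\le n-1$ together with $k\in\{n,2n-2,2n-1\}$, require a finer analysis, and this is where the real work lies, since here $k|\mathcal{F}|$ may be as large as $2n-2$ (or slightly more) and connectivity alone is insufficient. In each such case I would verify that $|V(\mathcal{F})|<\kappa_1(B_n)=3n-4$ (Lemma 4.3), so that $B_n-\mathcal{F}$ cannot have all its components of order at least $2$; hence some component is a single vertex $u$, which forces $N_{B_n}(u)\subseteq V(\mathcal{F})$. By Lemma \ref{TC}, $B_n[N_{B_n}(u)]$ is the disjoint union of two cliques $A$ and $B$, each isomorphic to $K_{n-1}$, with no edge between them.

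The crux is then a covering estimate: how many vertices of $A\cup B$ can a single copy of $C_k$ contain? Setting $X=V(B_n)\setminus(N_{B_n}(u)\cup\{u\})$ for the ``outside'', triangle-freeness of $CQ_n$ (Lemma 4.1) forces every vertex of $X$ to have at most one neighbour in $A$ and at most one in $B$, and forbids $A$--$B$ edges. Viewing a cycle as an alternation of maximal arcs inside $A$ or $B$ separated by segments lying in $X$, I would deduce that a $C_k$ confined to one clique covers exactly $k$ vertices (which needs $k\le n-1$), that any cycle meeting $X$ must use at least two vertices of $X$ (a single outside vertex can neither close an arc nor bridge $A$ to $B$ and back), and hence that a cycle not confined to one clique covers at most $k-2$ vertices of $A\cup B$, with $k-1$ never attainable. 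These facts yield the counting contradictions: for $k=n$ no cycle fits inside $K_{n-1}$, so each covers at most $n-2$ vertices and two cover only $2n-4<2n-2=|A\cup B|$, forcing three; for $k\in\{2n-2,2n-1\}$ a single cycle would have to contain all of $A\cup B$ yet has at most one outside vertex, too few to join the two cliques into one cycle; and for $r=\lfloor\frac{k}{2}\rfloor$ one deduces $V(\mathcal{F})=A\cup B$ with every cycle confined to a clique, whence $A$ would be partitioned into $k$-cycles, contradicting $k\nmid(n-1)$.

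The main obstacle I expect is making this covering estimate rigorous and uniform: formalising the arc/segment decomposition of a cycle relative to the two cliques, deriving the ``at least two outside vertices'' and ``coverage $k-1$ is impossible'' claims cleanly from triangle-freeness, and then matching the per-cycle capacities against the exact value of $r$ so that the three bounds $2\lceil\frac{n-1}{k}\rceil-1$, $3$ and $2\lceil\frac{n-1}{k}\rceil$ emerge in precisely the stated ranges.
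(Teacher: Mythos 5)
Your proposal is correct and takes essentially the same route as the paper's proof: eliminate most parameter ranges by comparing $k|\mathcal{F}|$ against $\kappa(B_n)=2n-2$ (Lemma 4.2), then in the tight ranges use $|V(\mathcal{F})|<\kappa_1(B_n)$ (Lemma 4.3) to force a singleton component $u$ with $N_{B_n}(u)\subseteq V(\mathcal{F})$, and finish via Lemma \ref{TC} plus triangle-freeness of $CQ_n$, which shows any vertex outside $N_{B_n}(u)\cup\{u\}$ has at most one neighbour in each of the two cliques, so a cycle leaving the cliques wastes at least two vertices. Your packaging is in fact somewhat cleaner than the paper's endgame (the paper treats all of $\lfloor\frac{k}{2}\rfloor\le r\le k-1$ by the singleton argument and is quite terse about why a $C_k$ cannot cover the two leftover $P_r$'s, whereas you settle $r>\frac{k}{2}$ by pure counting and make the ``at least two outside vertices'' estimate explicit), but the underlying ideas are identical.
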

\begin{proof} Suppose to the contrary that $\kappa(B_n; C_k)\leq 2\lceil\frac{n-1}{k}\rceil-2$ for $3\leq k\leq n-1$ and $1\leq r\leq \lfloor\frac{k}{2}\rfloor-1$, or $k=2n$, $\kappa(B_n; C_k)\leq 2$ for $k=n$ and $\kappa(B_n; C_k)\leq 2\lceil\frac{n-1}{k}\rceil-1$ for otherwise. Then there exists a $C_k$-structure cut $\mathcal{F}$. Let $C$ be a smallest component. We have $|V(\mathcal{F})|\geq \kappa (B_n)=2n-2$ by Lemma 4.2.
 For $3\leq k\leq n-1$ and $1\leq r\leq \lfloor\frac{k}{2}\rfloor-1$, or $k=2n$,
$|V(\mathcal{F})|\leq k(2\lceil\frac{n-1}{k}\rceil-2)\leq k(\frac{2(n+k-2)}{k}-2)=2n-4<2n-2$, a contradiction. We have a Similar contradiction for $3\leq k\leq n-1$ and $r=0$ since $|V(\mathcal{F})|<2n-2$.

For $n\leq k\leq 2n-1$, it is easy to find $|V(\mathcal{F})|\leq 2n$. For $3\leq k\leq n-1$ and $\lfloor\frac{k}{2}\rfloor\leq r\leq k-1$, we have
$$
\aligned
|V(\mathcal{F})|\leq k\cdot(2\lceil\frac{n-1}{k}\rceil-1)\leq k\cdot(2\frac{n-1+\lceil\frac{k}{2}\rceil}{k}-1)\leq2(n-1+\frac{k+1}{2})-k=2n-1.
\endaligned
$$
Then $|V(\mathcal{F})|<\kappa_1(B_n)$ for $n\geq 5$ by Lemma 4.3. Thus $|V(C)|=1$, say $u=[v,w]$, which implies that
$N_{B_n}(u)\subseteq V(\mathcal{F})$ for $n\leq k\leq 2n-1$, or $3\leq k\leq n-1$ and $\lfloor\frac{k}{2}\rfloor\leq r\leq k-1$. We find $|\mathcal{F}|\leq 1$ for $n+1\leq k\leq 2n-1$, that is, a $C_k$ covers all vertices of $N_{B_n}(u)$, contradicting Lemma \ref{TC}. We find $|\mathcal{F}|\leq 2$ for $n=k$, which implies that we need two $C_n$'s to cover all vertices of $N_{B_n}(u)$. We know the longest cycle is $C_{n-1}$ in $B_n[N_{B_n}(u)]$ by Lemma \ref{TC}.  Then a $C_n$ cover $n-1$ vertices of $N_{B_n}(u)$ and these $n-1$ vertices are in a $K_{n-1}$. Let $x=[x_1, x_2]$ be a vertex of $C_n$ but $x\notin N_{B_n}(u)$. Then $x$ is adjacent to both $[v,v^i]$ and $[v,v^j]$ (or $[w,w^i]$ and $[w,w^j]$) for $i,j\in[n]$. Thus $x_1=v^i$ (or $w^i$) and $x_2=v^j$ (or $w^j$). It is a contradiction since $[v^i,v^j]\notin V(B_n)$ and $[w^i,w^j]\notin V(B_n)$.
For $3\leq k\leq n-1$ and $\lfloor\frac{k}{2}\rfloor\leq r\leq k-1$, we have $2r\geq k-1$ and these $2r$ vertices induce two disjoint $P_r$'s. Then we know a $C_k$ can not cover two disjoint $P_r$'s by Lemma \ref{TC}, a contradiction.
\end{proof}

\begin{Lem}\label{CU} For $n\geq 5$ and $6\leq k\leq 2n$, let $r$ be remainder of $n-1$ divided by $k$. Then
\begin{equation*}
\kappa(B_n; C_k)\leq
\left\{
\begin{aligned}
&2\lceil\frac{n-1}{k}\rceil-1,~~if~6\leq k\leq n-1~and~1\leq r\leq \lfloor\frac{k}{2}\rfloor-1,~or~ k=2n,\\
&3,~~if~k=n,\\
&2\lceil\frac{n-1}{k}\rceil,~~otherwise.\\
\end{aligned}
\right.
\end{equation*}
\end{Lem}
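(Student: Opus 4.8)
The plan is to mimic the constructive upper-bound arguments of Lemmas \ref{SU} and \ref{PU}: I fix the vertex $u=[v,w]$ with $v=00\ldots 00$ and $w=10\ldots 00$, recall from Lemma \ref{TC} that $B_n[N_{B_n}(u)]$ splits into two \emph{independent} complete graphs $A=B_n[\{[v,v^i]\mid 0\le i\le n-2\}]\cong K_{n-1}$ and $B=B_n[\{[w,w^i]\mid 0\le i\le n-2\}]\cong K_{n-1}$, and then exhibit, in each regime of the formula, a family $\mathcal F$ of $C_k$'s of exactly the stated cardinality whose vertex set contains $N_{B_n}(u)$ but avoids the escape vertex $[11\ldots 11,11\ldots 10]$. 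Once $N_{B_n}(u)\subseteq V(\mathcal F)$ while that escape vertex survives, $B_n-\mathcal F$ is disconnected with $u$ a separated singleton, so $\mathcal F$ is a $C_k$-structure cut and $\kappa(B_n;C_k)\le|\mathcal F|$.

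Two building blocks drive the construction. First, since $A$ and $B$ are complete on $n-1$ vertices, each contains a cycle of every length $3\le\ell\le n-1$; hence for $k\le n-1$ a single $C_k$ lying inside one copy covers exactly $k$ of its vertices, and $\lceil\frac{n-1}{k}\rceil$ such cycles cover an entire copy. Second, for suitable $i,j$ the string $[v^i,w^j]$ is a genuine vertex of $B_n$ lying outside $N_{B_n}(u)$ and adjacent to both $[v,v^i]\in A$ and $[w,w^j]\in B$ (for instance $[v^{n-2},w^{n-2}]$, already used in Lemma \ref{SU}); such a \emph{bridge} lets one cycle cross from $A$ to $B$ through a single external vertex. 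In contrast, joining two vertices of the \emph{same} copy through one external vertex would force a triangle in $CQ_n$, which is forbidden by Lemma 4.1; so a cycle that reuses one copy must spend at least two external vertices to close.

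With these blocks the regimes fall out. For $n+1\le k\le 2n-1$ (bound $2$) a single $C_k$ can enclose a whole copy, because completing the $(n-1)$-run needs $k-(n-1)\ge 2$ external vertices, which are available; two such cycles cover $A$ and $B$. For $k=2n$ (bound $1$) one $C_{2n}$ threads all of $A$, a bridge, all of $B$, and a second bridge. For $k\le n-1$ with $k\mid n-1$ or with $\lfloor\frac{k}{2}\rfloor\le r\le k-1$, I cover each copy independently with $\lceil\frac{n-1}{k}\rceil$ internal cycles, giving $2\lceil\frac{n-1}{k}\rceil$. In the saving case $6\le k\le n-1$ with $1\le r\le\lfloor\frac{k}{2}\rfloor-1$, I use $\lfloor\frac{n-1}{k}\rfloor$ internal cycles per copy, leaving exactly $r$ vertices uncovered in each; since $2r\le k-2$, a single extra $C_k$ absorbs all $2r$ leftovers together with $k-2r\ge 2$ bridge/external vertices, yielding $2\lfloor\frac{n-1}{k}\rfloor+1=2\lceil\frac{n-1}{k}\rceil-1$.

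The hard part is the case $k=n$, where the naive count would give $2$ but the answer is $3$. Here a single $C_n$ cannot enclose a full $K_{n-1}$, since closing an $(n-1)$-run would require two external vertices while only $n-(n-1)=1$ is available, and routing through one external vertex between two vertices of one copy is impossible by Lemma 4.1. The workaround is to cover $n-2$ vertices of each copy by one cycle apiece, leaving one leftover per copy, and to collect the two leftovers with a third $C_n$ using the bridge $[v^{n-2},w^{n-2}]$ on one side and an external path of length $n-3$ on the other; I must verify this third cycle has exactly $n$ vertices and misses the escape vertex. In every regime the genuine obstacle is the same bookkeeping: checking simultaneously that each purported $C_k$ is realised by actual vertices and edges of $B_n$, that every bridge $[v^i,w^j]$ really exists (i.e. $v^i\sim w^j$ in $CQ_n$) and the chosen bridges are mutually compatible for closing each cycle, and that none of the external vertices coincides with $[11\ldots 11,11\ldots 10]$, so that $u$ remains isolated exactly as in Lemmas \ref{SU} and \ref{PU}.
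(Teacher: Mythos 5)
Your proposal is correct and takes essentially the same approach as the paper's proof: the same isolated vertex $u=[v,w]$, the same appeal to Lemma 4.4's two-independent-$K_{n-1}$ structure of $B_n[N_{B_n}(u)]$, and the same regime-by-regime constructions (internal cycles for $k\le n-1$, whole-copy cycles padded by external vertices for $n+1\le k\le 2n-1$, and bridge vertices such as $[v^1,w^1]$ and $[v^{n-2},w^{n-2}]$ for $k=2n$, $k=n$, and the small-remainder saving case). The only deviations are cosmetic --- you fill the range $\lfloor\frac{k}{2}\rfloor\le r\le k-1$ with overlapping purely internal cycles where the paper pads with external vertices, and your third cycle for $k=n$ uses one bridge plus an external path where the paper reuses already-covered vertices and a second bridge --- and the bookkeeping you defer (existence of bridges and of the padding vertices, avoidance of the escape vertex) is exactly what the paper's explicit constructions carry out.
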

\begin{proof} Let $u=[v, w]$ be a vertex of $B_n$ with $v=00\ldots 00$ and $w=10\ldots 00$. Then $N_{B_n}(u)=\{[v,v^i], [w, w^i]|0\leq i\leq n-2\}$. Let $P_{n-1}(v)=\langle [v,v^1], [v, v^0], [v, v^2], [v,v^3], \ldots, [v,v^{n-2}]\rangle$ be a path with $n-1$ vertices.

For $k=2n$, we set
$$
\aligned
\mathcal{F}=C_{k}=\langle [v^1, w^1], P_{n-1}(v), [v^{n-2}, w^{n-2}], P^{-1}_{n-1}(w)\rangle.
\endaligned
$$

For $n+1\leq k\leq 2n-1$, we set $\mathcal{F}=\{C^1_k, C^2_k\}$ and
$$
\aligned
C^1_k=\langle P_{n-1}(v), [v^{n-2}, v^{n-2,1}], [v^{n-2,1}, v^1], t_1, \ldots, t_{k-n-1}\rangle
\endaligned
$$
where $\{t_i|1\leq i\leq k-n-1\}\subseteq \{[v^1, v^{1, i}]|i=0, n-1$ and $2\leq i\leq n-3\}$,
$$
\aligned
C^2_k=\langle P_{n-1}(w), [w^{n-2}, w^{n-2,1}], [w^{n-2, 1}, w^1], t_1, \ldots, t_{k-n-1}\rangle
\endaligned
$$
where $\{t_i|1\leq i\leq k-n-1\}\subseteq \{[w^1, w^{1, i}]|i=0, n-1$ and $2\leq i\leq n-3\}$.

For $k=n$, we set $\mathcal{F}=\{C^1_k, C^2_k, C^3_k\}$ and
$$
\aligned
C^1_k=\langle P_{n-1}(v)-[v,v^{n-2}], [v^{n-3}, v^{n-3,1}], [v^{n-3, 1}, v^1]\rangle;
\endaligned
$$
$$
\aligned
C^2_k=\langle P_{n-1}(w)-[w,w^{n-2}], [w^{n-3}, w^{n-3, 1}], [w^{n-3, 1}, w^1]\rangle;
\endaligned
$$
$$
\aligned
C^3_k=\langle [v,v^{n-2}], [v^{n-2}, w^{n-2}], [w,w^{n-2}], [w,w^1], [v^1, w^1], [v, v^1], t_1,\ldots, t_{k-6} \rangle,
\endaligned
$$
where $\{t_i|1\leq i\leq k-6\}\subseteq \{[v,v^i]|2\leq i\leq n-2\}$.

For $6\leq k\leq n-1$ and $1\leq i\leq \lfloor \frac{n-1}{k}\rfloor$, we set
$$
\aligned
C^i_k=\langle [v,v^{(i-1)k}], [v,v^{(i-1)k+1}], \ldots, [v,v^{ik-1}]\rangle;
\endaligned
$$
$$
\aligned
C'^i_k=\langle [w,w^{(i-1)k}], [w,w^{(i-1)k+1}], \ldots, [w,w^{ik-1}]\rangle.
\endaligned
$$
If $r=0$, then we set $\mathcal{F}=\{C^i_k, C'^i_k|1\leq i\leq \lfloor\frac{n-1}{k}\rfloor\}$.
If $1\leq r\leq \lfloor\frac{k}{2}\rfloor-1$, then we set $\mathcal{F}=\{C^{\lceil\frac{n-1}{k}\rceil}_k\}\cup\{C^i_k, C'^i_k|1\leq i\leq \lfloor\frac{n-1}{k}\rfloor\}$ and
$$
\aligned
C^{\lceil\frac{n-1}{k}\rceil}_k&=\langle [w,w^1], [v^1, w^1], [v, v^1], t_1,\ldots, t_{k-2r-4}, [v,v^{n-r-1}], [v,v^{n-r}], \ldots, [v,v^{n-2}], \\ &[v^{n-2}, w^{n-2}], [w,w^{n-2}],[w,w^{n-3}], \ldots, [w,w^{n-r-1}],  [w,w^1]\rangle;
\endaligned
$$
%for even $n-r$,$$ \aligned C^{\lceil\frac{n-1}{k}\rceil}_k&=\langle [v,v^{n-r}], [v,v^{n-r+1}], \ldots, [v,v^{n-1}],t_1,\ldots, t_{k-2r-2}, [v^{n-1}, w^{n-1}], [w,w^{n-1}],\\ &[w,w^{n-2}], \ldots, [w,w^{n-r}], [v^{n-r}, w^{n-r}]\rangle; \endaligned $$ and for odd $n-r$, $$ \aligned C^{\lceil\frac{n-1}{k}\rceil}_k&=\langle [v,v^{n-r+1}], [v,v^{n-r}], [v,v^{n-r+2}],\ldots, [v,v^{n-1}],t_1,\ldots, t_{k-2r-2}, [v^{n-1}, w^{n-1}],\\& [w,w^{n-1}], \ldots, [w,w^{n-r+2}], [w,w^{n-r}], [w,w^{n-r+1}], [v^{n-r+1}, w^{n-r+1}]\rangle; \endaligned $$
where $\{t_i|1\leq i\leq k-2r-4\}\subseteq \{[v, v^{i}]|2\leq i\leq n-r-2\}$.
If $\lfloor\frac{k}{2}\rfloor\leq r< k-1$, then we set $\mathcal{F}=\{C^i_k, C'^i_k|1\leq i\leq \lceil\frac{n-1}{k}\rceil\}$ and
$$
\aligned
C^{\lceil\frac{n-1}{k}\rceil}_k=\langle [v,v^1], [v,v^{n-r-1}], [v,v^{n-r}], \ldots, [v,v^{n-2}],t_1,\ldots, t_{k-r-3}, [v^{n-2}, v^{n-2, 1}], [v^{n-2, 1}, v^1]\rangle,
\endaligned
$$ where $\{t_i|1\leq i\leq k-r-3\}\subseteq \{[v^{n-2}, v^{n-2, i}]|0\leq i\leq n-3\}$,
$$
\aligned
C'^{\lceil\frac{n-1}{k}\rceil}_k&=\langle [w,w^1], [w,w^{n-r-1}], [w,w^{n-r}], \ldots, [w,w^{n-2}],t_1,\ldots, t_{k-r-3}, [w^{n-2}, w^{n-2, 1}],\\& [w^{n-2, 1}, w^1]\rangle,
\endaligned
$$ where $\{t_i|1\leq i\leq k-r-3\}\subseteq \{[w^{n-2}, w^{n-2, i}]|1\leq i\leq n-3\}$.
If $r=k-1$, then we set $\mathcal{F}=\{C^i_k, C'^i_k|1\leq i\leq \lceil\frac{n-1}{k}\rceil\}$ and
$$
\aligned
C^{\lceil\frac{n-1}{k}\rceil}_k=\langle [v,v^0], [v,v^{n-k}], [v,v^{n-k+1}], \ldots, [v,v^{n-2}]\rangle,
\endaligned
$$
$$
\aligned
C'^{\lceil\frac{n-1}{k}\rceil}_k=\langle [w,w^0], [w,w^{n-k}], [w,w^{n-k+1}], \ldots, [w,w^{n-2}]\rangle.
\endaligned
$$

 We find $N_{B_n}(u)\subseteq V(\mathcal{F})$ and $[11\ldots 11, 11\ldots 10]\notin V(B_n-\mathcal{F})$, which implies that $B_n-\mathcal{F}$ has two components. Then $u$ is a singleton and $[11\ldots 11, 11\ldots 10]$ belongs to the other component.
\end{proof}
By Lemmas \ref{CL} and \ref{CU}, we have the following theorem.
\begin{The}\label{C}For $n\geq 5$ and $6\leq k\leq 2n$, let $r$ be remainder of $n-1$ divided by $k$. Then
\begin{equation*}
\kappa(B_n; C_k)=
\left\{
\begin{aligned}
&2\lceil\frac{n-1}{k}\rceil-1,~~if~6\leq k\leq n-1~and~1\leq r\leq \lfloor\frac{k}{2}\rfloor-1,~or~ k=2n,\\
&3,~~if~k=n,\\
&2\lceil\frac{n-1}{k}\rceil,~~otherwise.\\
\end{aligned}
\right.
\end{equation*}
\end{The}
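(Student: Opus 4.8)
The plan is to pin down the exact value of $\kappa(B_n;C_k)$ by sandwiching it between the two matching bounds already established. Write $f(n,k)$ for the three-case expression on the right-hand side of the displayed equation. Lemma~\ref{CL} gives $\kappa(B_n;C_k)\geq f(n,k)$ and Lemma~\ref{CU} gives $\kappa(B_n;C_k)\leq f(n,k)$; since the two bounds coincide, equality follows immediately.

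Before concluding I would confirm that both lemmas are applicable throughout the range $6\leq k\leq 2n$ of the theorem. The lower bound Lemma~\ref{CL} holds for $3\leq k\leq 2n$ and the upper bound Lemma~\ref{CU} holds for $6\leq k\leq 2n$, so on the common range $6\leq k\leq 2n$ both are available at once. Next I would check, case by case, that the piecewise formulas in the two lemmas agree: for $6\leq k\leq n-1$ with $1\leq r\leq\lfloor\frac{k}{2}\rfloor-1$ and for $k=2n$ both give $2\lceil\frac{n-1}{k}\rceil-1$; for $k=n$ both give $3$; and in every other case both give $2\lceil\frac{n-1}{k}\rceil$. The one point to watch is that the first branch of Lemma~\ref{CL} is stated from $k=3$ while that of Lemma~\ref{CU} begins at $k=6$, but restricting to $6\leq k$ as in the theorem removes the discrepancy, so the case structures match exactly.

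For the theorem itself there is essentially no obstacle, as it is a direct corollary of the two lemmas; the real work sits upstream. The delicate part is the lower bound of Lemma~\ref{CL}: there one uses the $g$-extra connectivity estimate (Lemma~4.3) to force a smallest component of $B_n-\mathcal{F}$ to be a single vertex $u=[v,w]$, and then invokes Lemma~\ref{TC}---that $B_n[N_{B_n}(u)]$ is the disjoint union of two cliques $K_{n-1}$---to show that too few cycles $C_k$ cannot cover $N_{B_n}(u)$, since each $C_k$ meets a given $K_{n-1}$ in at most a path and any vertex outside $N_{B_n}(u)\cup\{u\}$ is adjacent to at most two vertices of $N_{B_n}(u)$. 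The matching upper bound of Lemma~\ref{CU} rests on the explicit, case-dependent constructions of cycle cuts exhibited there. Granting both lemmas, the theorem is immediate.
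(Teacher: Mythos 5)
Your proposal is correct and takes exactly the same route as the paper: the paper's proof of Theorem \ref{C} consists precisely of citing Lemma \ref{CL} for the lower bound and Lemma \ref{CU} for the upper bound, which coincide on the common range $6\leq k\leq 2n$. Your case-by-case check that the piecewise formulas agree once $k\geq 6$ (so the $3\leq k\leq 5$ portion of Lemma \ref{CL} is discarded) is the only verification needed, and your remark that the substantive work lives in the two lemmas is accurate.
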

\section{Conclusion }
In this paper, we obtained structure connectivities of two famous data center networks DCell and BCDC on some common structures.
For DCell network $D_{m,n}$, $m\geq 0$ and $n\geq 2$, we got that
$\kappa (D_{m,n}; K_{1,t})=\kappa^s (D_{m,n}; K_{1,t})=\lceil \frac{n-1}{1+t}\rceil+m$ for $1\leq t\leq m+n-2$ and $\kappa (D_{m,n}; K_s)= \lceil\frac{n-1}{s}\rceil+m$ for $3\leq s\leq n-1$ by analyzing the structural properties of $D_{m,n}$.
For BCDC network $B_n$, $n\geq 5$, we used the existing results of $g$-extra connectivity to obtain $\kappa(B_n; K_{1,t})$ and $\kappa^s(B_n; K_{1,t})$ for $1\leq t\leq 2n-3$ (see Theorems 4.5 and \ref{S}) and $\kappa(B_n; P_{k})$ and $\kappa^s(B_n; P_{k})$ for $4\leq k\leq 2n-1$ (see Theorem \ref{P}); $\kappa(B_n; C_{k})$ and $\kappa^s(B_n; C_{k})$ for $6\leq k\leq 2n$ (see Theorem \ref{C}). It is easy to find that $\kappa (D_{m,n}; P_{k})$ ($\kappa^s (D_{m,n}; P_{k})$) and $\kappa (D_{m,n}; C_{k})$ ($\kappa^s (D_{m,n}; C_{k})$) have no relevant conclusions.
Except for $K_{1,t}$, $P_{k}$, $C_{k}$ and $K_{s}$, there are still other structure connectivities of $D_{m,n}$ and $B_n$ have not been studied. So we will keep working on structure connectivity of data center networks.

\section*{Acknowledgement}
%The author thanks the referees for many helpful suggestions and comments.
The work is supported by NSFC (Grant No. 11871256).

\end{document}